\DeclareMathOperator{\Prob}{\mathbb{P}}   %probability
\newcommand{\1}{\mathds{1}}
\numberwithin{equation}{section}
\newcommand{\rd}{{\rm d}}
\newcommand{\rdA}{{\rm d A}}
\newcommand{\pBB}[1]{\Biggl({#1}\Biggr)}
\newcommand{\ba}{{\bf{a}}}
\newcommand{\by}{{\bf{y}}}
\newcommand{\bu}{{\bf{u}}}
\newcommand{\bv}{{\bf{v}}}
\newcommand{\al}{\alpha}
\newcommand{\be}{\begin{equation}}
\newcommand{\ee}{\end{equation}}
\newcommand{\hp}[1]{with $ #1 $-high probability}
\newcommand{\e}{{\varepsilon}}
\newcommand{\T}{\mathbb T}
\newcommand{\bT}{\T}
\newcommand{\non}{\nonumber}
\newcommand{\wH}{K}
\newcommand{\ttau}{\vartheta}
\renewcommand{\b}[1]{\bm{\mathrm{#1}}} %bold
\newcommand{\bb}{\mathbb} %blackboard bold
\renewcommand{\cal}{\mathcal}
\newcommand{\wt}{\widetilde}
\newcommand{\mG}{\mathcal G}
\newcommand{\me}{\mathrm{e}} %\newcommand{\me}{\mathrm{e}}
\newcommand{\ii}{\mathrm{i}} %\newcommand{\mi}{\mathrm{i}}
\newcommand{\col}{\mathrel{\mathop:}}
\newcommand{\st}{\,\col\,}
\newcommand{\deq}{\mathrel{\mathop:}=}
\renewcommand{\epsilon}{\varepsilon}
\renewcommand{\leq}{\leqslant}
\renewcommand{\geq}{\geqslant}
\renewcommand{\le}{\leq}
\renewcommand{\ge}{\geq}
\renewcommand{\P}{\mathbb{P}}
\newcommand{\E}{\mathbb{E}}
\newcommand{\R}{\mathbb{R}}
\newcommand{\C}{\mathbb{C}}
\newcommand{\qB}[1]{\Bigl[{#1}\Bigr]}
\newcommand{\hb}[1]{\bigl\{{#1}\bigr\}}
\newcommand{\hbb}[1]{\biggl\{{#1}\biggr\}}
\newcommand{\abs}[1]{\lvert #1 \rvert}
\newcommand{\absb}[1]{\bigl\lvert #1 \bigr\rvert}
\DeclareMathOperator{\tr}{Tr}
\DeclareMathOperator{\re}{Re}
\DeclareMathOperator{\im}{Im}
\DeclareMathOperator{\sgn}{sgn}
\DeclareMathOperator{\OO}{O}
\theoremstyle{plain} %plain, definition, remark
\newtheorem{theorem}{Theorem}[section]
\newtheorem*{theorem*}{Theorem}
\newtheorem{lemma}[theorem]{Lemma}
\newtheorem*{lemma*}{Lemma}
\newtheorem{corollary}[theorem]{Corollary}
\newtheorem*{corollary*}{Corollary}
\newtheorem{proposition}[theorem]{Proposition}
\newtheorem*{proposition*}{Proposition}
\newtheorem{definition}[theorem]{Definition}
\newtheorem*{definition*}{Definition}
\newtheorem*{example*}{Example}
\newtheorem*{remark*}{Remark}
\newtheorem*{remarks*}{Remarks}
\renewcommand{\section}{\@startsection
{section}%                   % the name
{1}%                         % the level
{0mm}%                       % the indent
{-2\baselineskip}%            % the before skip
{1\baselineskip}%          % the after skip
{\normalfont\large\scshape\centering}} % the style
\renewcommand{\subsection}{\@startsection
{subsection}%                   % the name
{2}%                         % the level
{0mm}%                       % the indent
{-\baselineskip}%            % the before skip
{0\baselineskip}%          % the after skip
{\normalfont\bf} } % the style
\newcommand{\nc}{\normalcolor}
\begin{document}

\title{{\sc\LARGE Local Circular Law for Random Matrices}\vspace{1cm}}

\date{}

\author{\vspace{0.5cm}\normalsize{\sc Paul Bourgade}${}^1$\thanks{Partially supported by NSF grant DMS-1208859}\quad\quad
{\sc Horng-Tzer Yau}${}^1$\thanks{Partially supported
by NSF grants DMS-0757425, 0804279}\quad\quad
{\sc Jun Yin}${}^2$\thanks{Partially supported
by NSF grant DMS-1001655}
 \\\\
\normalsize Department of Mathematics, Harvard University\\
\normalsize Cambridge MA 02138, USA \\ \normalsize  bourgade@math.harvard.edu \quad
\normalsize htyau@math.harvard.edu ${}^1$ \quad  \\ \\
\normalsize Department of Mathematics, University of Wisconsin-Madison \\
\normalsize Madison, WI 53706-1388, USA \ \normalsize jyin@math.wisc.edu ${}^2$\vspace{1cm}}

\maketitle

\begin{abstract}
The circular law asserts  that the spectral measure
of  eigenvalues of  rescaled random matrices without symmetry assumption converges to the uniform measure on the unit disk.
We prove a local version of this law at any point $z$ away from the unit circle. More precisely,  if $ | |z| - 1 | \ge \tau$
for arbitrarily small $\tau> 0$,
the circular law is valid around $z$ up to scale $N^{-1/2+ \e}$ for any $\e > 0$ under the  assumption that
the distributions of the matrix entries satisfy a uniform subexponential decay condition.
\end{abstract}

\vspace{1.5cm}

{\bf AMS Subject Classification (2010):} 15B52, 82B44

\medskip

\medskip

{\it Keywords:} local circular law, universality.

\medskip

\newpage

 \section{Introduction}
A considerable literature about random matrices focuses on Hermitian or symmetric  matrices with independent entries.
These  models  are paradigms for
local eigenvalues statistics of many random Hamiltonians, as envisioned by Wigner.
The  study of non-Hermitian random matrices
goes back to Ginibre, then in Princeton and motivated by Wigner.
Ginibre's  viewpoint on the problem was described as follows  \cite{Gin1965}:

{\it Apart  from  the
intrinsic  interest  of  the   problem,
one  may  hope
that   the   methods  and  results  will  provide  further
insight  in the   cases  of  physical  interest  or suggest
as  yet  lacking applications.}

In fact the eigenvalues statistics found by Ginibre, in the case of Gaussian complex or real  entries,
correspond to bidimensional gases, with distinct temperatures and symmetry conditions; this is therefore
a model for many interacting particle systems in dimension 2 (see e.g. \cite{For2010} chap. 15).
The spectral statistics found in \cite{Gin1965} in the complex case are the following:  given a
$N\times N$ matrix with independent  entries $\frac{1}{\sqrt {N}}z_{ij}$, the $z_{ij}$'s being
identically distributed according to the standard complex Gaussian measure $\mu_g=\frac{1}{\pi}e^{-|z|^2}\rdA(z)$
(where $\rdA$ denotes the Lebesgue measure on $\mathbb{C}$),
its eigenvalues $\mu_1,\dots,\mu_N$ have a probability density
proportional to
$$
\prod_{i<j}|\mu_i-\mu_j|^2e^{-N\sum_{k}|\mu_k|^2},
$$
with respect to the Lebesgue measure on $\C^{N}$.
This law is  a determinantal point process (because of the Vandermonde determinant) with an
explicit kernel
given by (see \cite{Gin1965,Meh2004} for a proof)
$$
K_N(z_1,z_2)=\frac{N}{\pi}e^{-\frac{N}{2}(|z_1|^2+|z_2|^2)}\sum_{\ell=0}^{N-1}\frac{(N z_1\overline{z_2})^\ell}{\ell!},
$$
with respect to the Lebesgue measure on $\C$. This integrability property allowed Ginibre to derive the  circular law for the eigenvalues,
i.e.,
the empirical spectral distribution converges to the uniform measure on the unit circle,
\begin{equation}\label{eqn:circular}
\frac{1}{\pi}\1_{|z|<1}\rdA(z).
\end{equation}
This phenomenon is the  non-Hermitian counterpart of the semicircular law for Wigner random Hermitian matrices,
and the quarter circular limit for Marchenko-Pastur random covariance matrices.

In the case of real Gaussian entries, the join distribution of the eigenvalues is
more complicated but still integrable, allowing Edelman \cite{Ede1997}
to prove the limiting circular law as well; for more precise asymptotic
properties of the real Ginibre ensemble, see \cite{ForNag2007,Sin2007,BorSin2009}.
We note also that the (right) eigenvalues of the quaternionic Ginibre ensemble
were recently shown to converge to a (non-uniform) measure on the unit ball of the quaternions field \cite{BenCha2011}.

 For non-Gaussian entries, there is no explicit formula for the eigenvalues. Furthermore, the spectral measure,
as a measure on $\C$,  cannot be   characterized by computing $\tr(M^\alpha\bar M^\beta)$. Thus the moment method,
which is the popular  way  to prove the semicircle law,
cannot be applied to solve  this problem.
Nevertheless,
Girko \cite{Gir1984}  partially proved
that
 the spectral measure
of a non-Hermitian matrix $M$ with independent
entries  converges to the circular
law (\ref{eqn:circular}). The key  insight of this work was the
introduction of the {\it Hermitization technique}.
This allows him to
translate the convergence of complex empirical measures into the convergence of logarithmic transforms for
a family of Hermitian matrices.  More precisely, if we denote the original non-Hermitian matrix by $X$ and the eigenvalues of
$X$ by $\mu_j$,  then for any $\mathscr{C}^2$ function $F$ we have the identity
\be\label{id0}
\frac 1 N \sum_{j=1}^N F (\mu_j) = \frac1{4\pi N} \int \Delta F(z)   \tr   \log (X^* - z^* ) (X-z)    \rdA(z).
\ee
From this formula, it is clear that the small eigenvalues of the Hermitian matrix $(X^* - z^* ) (X-z)  $
play a special role due to the logarithmic singularity at $0$. The key question is to
estimate  the smallest eigenvalues of $(X^* - z^* ) (X-z)$, or in other words, the smallest singular  values
of $ (X-z)$.
 This problem
was not treated  in \cite{Gir1984},
but the  gap was remedied in a series of  papers.
First Bai \cite{Bai1997} was able to treat the  logarithmic  singularity assuming  bounded density and bounded high moments
for the entries of the matrix (see also \cite{BaiSil2006}).
Lower bounds on the smallest singular values were given in Rudelson, Vershynin \cite{Rud2008,RudVer2008}, and subsequently
Tao, Vu \cite{TaoVu2008}, Pan, Zhou \cite{PanZho2010} and G\"otze, Tikhomirov \cite{GotTik2010} weakened the
moments and smoothness assumptions for the circular law, till the optimal $\mbox{L}^2$ assumption,
under which the circular law was proved in \cite{TaoVuKri2010}.

 The purpose of this paper  is to prove a local version of the circular law, up to the optimal scale
$N^{-1/2 + \e}$ (see Section 2 for a precise statement). Below this scale, detailed local statistics will be important
and that  is beyond the scope of the current paper. The main tool of this paper is a detailed analysis
of the self-consistent equations of the  Green functions
$$
G_{ij}(w) = [(X^* - z^* ) (X-z) - w]^{-1}_{ij}.
$$
Our method is related to   the proof of a  local semicircular law in
\cite{ErdYauYin2010Adv}  or to a local   Marchenko-Pastur law  in \cite{PilYin2011}.
We are able
to control $G_{ij}(E + \ii \eta)$ for the energy parameter $E$ in any compact set and sufficient small $\eta$.
This provides  sufficient information to use the formula \eqref{id0} for functions $F$ at the  scales $N^{-1/2+ \e}$.
We also notice that a local Marchenko-Pastur law for $X^*X$ was proved
in \cite{CacMalSch2012}, simultaneously with the present article.

Finally, we remark that the local circular law demonstrates that the eigenvalue distribution in the unit disk is extremely ``uniform''.
If the eigenvalues are distributed in the unit disk by a uniform  statistics or any other statistics with summable  decay of correlations,
then there will be big holes or some clusterings of eigenvalues in the disk.  While the usual circular law does not rule out
these phenomena, the local law established in this paper does. This implies  that  the eigenvalue statistics
cannot be any probability laws  with summable decay of correlations

\nc

\section{The local circular law}

We first  introduce some notations.   Let $X$ be an $N \times N$  matrix with independent centered entries of variance $ N^{-1} $.
The matrix elements can be either real or complex, but for the sake of simplicity we will consider real entries in this paper.
Denote the eigenvalues of $X$ by $\mu_j$, $j=1, \ldots, N$. We will use the following notion of stochastic domination which simplifies
the presentation of the results and their proofs.

\begin{definition}[Stochastic domination]
Let $W=(W_N)_{N\geq 1}$ be family a  random variables and $\Psi=(\Psi_N)_{N\geq 1}$  be  deterministic  parameters.
We say that $W$ is  \emph{stochastically dominated} by  $\Psi$
if for any   $  \sigma> 0$ and $D > 0$ we have
\begin{equation*}
 \P \qB{\absb{W_N} > N^\sigma \Psi_N  } \;\leq\; N^{-D}
\end{equation*}\nc
for sufficiently large $N$.
We denote this stochastic domination property by
\begin{equation*}
W \;\prec\; \Psi\,,\quad or \quad W =\OO_\prec (\Psi).
\end{equation*}
\end{definition}

In this paper, we will assume that  the probability distributions for the   matrix elements
have the uniform subexponential decay property, i.e.,
\be\label{subexp}
\sup_{(i,j)\in\llbracket 1,N\rrbracket^2}\Prob\left(|\sqrt{N}X_{i,j}|>\lambda\right)\leq \vartheta^{-1} e^{-\lambda^\vartheta }
\ee
for some constant $\vartheta >0$ independent of $N$.
This condition can of course be weakened to an hypothesis of boundedness on sufficiently high moments, but the error estimates
in the following Theorem would be weakened as well.
{ We now state our local circular law, which holds up to  the optimal scale $N^{-1/2+\e}$}.

\begin{theorem}\label{lcl}
Let $X$ be an $N \times N$  matrix with independent centered entries of variance $ N^{-1} $.
Suppose that  the probability distributions of the   matrix elements satisfy the
uniformly subexponentially decay condition \eqref{subexp}.
We assume that for some fixed $ \tau>0$, for any $N$ we have $\tau\le ||z_0|-1|\le \tau^{-1} $  ($z_0$ can  depend
on $N$).
Let $f $ be a smooth non-negative function which may depend on $N$,
such that   $\|f\|_\infty\leq C$, $\|f'\|_\infty\leq N^C$ and $f(z)=0$ for $|z|\ge  C$, for some constant $C$ independent of $N$.
Let $f_{z_0}(z)=N^{2a}f(N^{a}(z-z_0))$ be the approximate delta function obtained from rescaling $f$ to the size order $N^{-a}$ around $z_0$.
We denote by $D$ the unit disk. Then for any $a\in(0,1/2]$,
 \be\label{yjgq}
 \left( N^{-1} \sum_{j}f_{z_0} (\mu_j)-\frac1\pi \int_D f_{z_0}(z)    \, \rdA(z)  \right)\prec N^{-1+2a }
 \| \Delta f \|_{L_1}.
 \ee
 \end{theorem}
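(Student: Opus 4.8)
The plan is to use Girko's Hermitization identity \eqref{id0} applied to $F = f_{z_0}$, which reduces the problem to controlling, uniformly in $z$ near $z_0$, the quantity
\[
\frac{1}{N}\tr \log\bigl[(X^*-z^*)(X-z)\bigr] = \int_0^\infty \log x \, \rd \nu_z(x),
\]
where $\nu_z$ is the empirical spectral measure of the Hermitian matrix $Y_z := (X^*-z^*)(X-z)$. Writing $\nu_z^{\mathrm{sc}}$ for the deterministic limiting measure (the Marchenko--Pastur-type law associated with $z$, which can be read off from the self-consistent equation for $\frac1N \tr G(w)$ with $G(w) = (Y_z - w)^{-1}$), the target \eqref{yjgq} follows once we show
\[
\int \Delta f_{z_0}(z)\left( \int_0^\infty \log x\, \rd(\nu_z - \nu_z^{\mathrm{sc}})(x)\right) \rdA(z) \prec N^{-1+2a}\|\Delta f\|_{L_1},
\]
using that $\|\Delta f_{z_0}\|_{L_1} = N^{2a}\|\Delta f\|_{L_1}$, together with the fact that the deterministic part $\frac{1}{4\pi N}\int \Delta f_{z_0}(z)\, N\!\int\log x\, \rd\nu_z^{\mathrm{sc}}(x)\,\rdA(z)$ reproduces $\frac1\pi\int_D f_{z_0}$ exactly (or up to negligible error), since the circular density is the distributional Laplacian of the logarithmic potential of the circular law.

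The heart of the matter is a local law for the Green function $G_{ij}(w)$ of $Y_z$. First I would derive the self-consistent equation: using Schur complement / resolvent expansions for $Y_z = (X-z)^*(X-z)$, one shows that $m_N(w) := \frac1N\tr G(w)$ satisfies, with high probability, a perturbed version of the deterministic equation defining $m_{\mathrm{sc},z}(w)$, and that the individual entries satisfy $|G_{ij}(w) - \delta_{ij} m_{\mathrm{sc},z}(w)| \prec \Psi(w)$ for a suitable control parameter $\Psi(w) \sim \sqrt{\im m_{\mathrm{sc},z}(w)/(N\eta)} + 1/(N\eta)$ on a spectral domain $w = E + \ii\eta$ with $\eta \geq N^{-1+\delta}$; this is the analogue of the local semicircle/Marchenko--Pastur law, proved by the standard bootstrap from a weak a priori bound via concentration of quadratic forms (large deviation estimates for $\bka{v, G v}$), which is where the subexponential decay \eqref{subexp} and the $\prec$ formalism get used, uniformly over a net of $z$'s near $z_0$ and then extended by Lipschitz continuity in $z$. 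The condition $\tau \le ||z_0|-1| \le \tau^{-1}$ is crucial here: it guarantees that the smallest eigenvalue of $Y_z$ is bounded below (the spectral edge of $\nu_z^{\mathrm{sc}}$ stays away from $0$), so there is no hard edge near the logarithmic singularity and the local law holds all the way down to the optimal scale without the square-root-edge complications.

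From the entrywise local law one obtains control of $\nu_z([0,x])$ versus $\nu_z^{\mathrm{sc}}([0,x])$ at all scales $\ge N^{-1+\delta}$ (Helffer--Sj\"ostrand / Stieltjes inversion), hence control of $\int \log x\, \rd(\nu_z - \nu_z^{\mathrm{sc}})$ provided the contribution of the smallest singular values is handled separately. Since $\tau \le ||z_0|-1|$, the smallest singular value $s_N(X-z)$ is of order $1$ with overwhelming probability (no small-ball issue here, in sharp contrast to the $|z|\approx 1$ regime), so the region $x \in [0, N^{-1+\delta}]$ contributes only a negligible amount to $\int \log x\, \rd\nu_z$ after the $\log$-singularity is tamed by a standard dyadic decomposition plus the lower tail bound on $s_N$. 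Integrating against $\Delta f_{z_0}$ and tracking the $N^{2a}$ from the rescaling gives \eqref{yjgq}. The main obstacle I anticipate is twofold: (i) establishing the local law for $Y_z$ uniformly in $z$ with the correct control parameter $\Psi$ down to $\eta \sim N^{-1+\delta}$ — the self-consistent equation for $(X-z)^*(X-z)$ is a genuinely two-variable ($|z|$ and $w$) problem and its stability must be checked carefully in the relevant domain — and (ii) the small-singular-value estimate near $x=0$; although $s_N(X-z) \gtrsim 1$ is comparatively easy when $||z|-1|\ge\tau$, one still needs a quantitative lower tail bound compatible with the $\prec N^{-1+2a}$ error, and one must ensure the logarithmic integral is not dominated by the few smallest eigenvalues — this is exactly the step that makes the scale $N^{-1/2+\e}$ (rather than something coarser) accessible.
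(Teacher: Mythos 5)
Your overall strategy (Girko's Hermitization, a local law for $G(w)=(Y_z^*Y_z-w)^{-1}$, Helffer--Sj\"ostrand to convert Stieltjes-transform control into counting-function control, then handle the $\log$-singularity separately) matches the paper's skeleton. However, there is a serious error in how you propose to handle the singularity at $x=0$, and it concerns precisely the case of interest.

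You assert that \emph{``Since $\tau\le ||z_0|-1|$, the smallest singular value $s_N(X-z)$ is of order $1$ with overwhelming probability (no small-ball issue here)''} and that the edge of $\nu_z^{\mathrm{sc}}$ \emph{``stays away from $0$.''} Both claims are false for $|z_0|\le 1-\tau$, which is exactly the interior of the disk where the circular law lives. As recorded in Proposition~\ref{prorhoc}, when $|z|\le 1-\tau$ one has $\lambda_-<0$, so $\rho_{\rm c}(\cdot,z)$ is supported on $[0,\lambda_+]$ with a hard edge at $0$ and a $1/\sqrt{x}$ blow-up there; correspondingly $|m_{\rm c}(w,z)|\sim |w|^{-1/2}$ (Lemma~\ref{pmcc1}, Case~3). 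The smallest eigenvalue of $Y_z^*Y_z$ is then \emph{typically} of order $N^{-2}$ (see the remark in Lemma~\ref{lem:syc0}, citing Rudelson--Vershynin), not of order $1$; the condition $||z_0|-1|\ge\tau$ protects you from the instability of the self-consistent equation near the unit circle, but it does \emph{not} bound the smallest singular value of $X-z$ away from zero when $z$ is inside the disk. Consequently your ``dyadic decomposition plus lower tail bound on $s_N\gtrsim 1$'' argument breaks down, and so does the plan to run the local law only down to $\eta\ge N^{-1+\delta}$: the paper must push the Green function estimates to $\eta\gtrsim N^{-1}|m_{\rm c}|^{-1}\sim N^{-1}|w|^{1/2}$, which near $w\approx 0$ is of order $N^{-2}$ (see the remark after \eqref{eqn:Salpha}), precisely because the hard edge sits at $0$.

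To fix this you would need two additional ingredients, which is what the paper actually supplies: a rigidity estimate (Lemma~\ref{rg}) valid down to the hard-edge scale, showing $\lambda_j\approx\gamma_j$ with relative error $\lesssim \varphi^{C}/j$ for small $j$ (so that $\sum_{j>\varphi^{C}}|\log\lambda_j-\log\gamma_j|\lesssim\varphi^{2C}$), and a separate, quantitative lower tail bound on the smallest eigenvalue of the form $|\log\lambda_1|\prec 1$ (Lemma~\ref{lem:syc0}), which does \emph{not} follow from $||z_0|-1|\ge\tau$ but rather from smallest-singular-value results \`a la Rudelson--Vershynin / Tao--Vu, uniformly in $z$. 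Only the handful of indices $j\le\varphi^{C}$ are then controlled by this crude bound, while the bulk is controlled by rigidity. If you replace the incorrect $s_N\gtrsim 1$ claim by these two steps and strengthen the local-law domain to include $\eta\sim N^{-1}|w|^{1/2}$, your argument aligns with the paper's.
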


\section{
Hermitization and local  Green function  estimate}

In the following, we will use the notation
$$
 Y_z=X-z I  \quad
$$
 where $I$ is the identity operator.
Let  $\lambda_j(z)$ be   the $j$-th eigenvalue (in the  increasing ordering) of $Y^*_z  Y_z $.
We will generally omit the $z-$dependence in these notations.
Thanks to the Hermitization technique of Girko \cite{Gir1984},
the first step in proving the local circular law is to understand the local statistics of eigenvalues of $Y^*_z Y_z$,
for $z$ strictly inside the unit circle. In this section, we first recall some well-known facts about the
Stieltjes transform of the empirical measure of eigenvalues of  $Y^*_z Y_z$.
We  then present the key  estimate concerning the Green function of  $Y^*_z Y_z$ in almost optimal spectral windows.
This result will be used later on
to prove a local version of the circular law.

\subsection{Properties of the limiting density of the Hermitization matrix}. Define the Green function
 of $Y^*_z Y_z$  and its trace by
$$
G(w):=G(w,z)=(Y^*_z Y_z-w)^{-1},\quad  m(w):=m(w,z)=\frac{1}{N}\tr G(w,z)=\frac{1}{N}\sum_{j=1}^N\frac{1}{ \lambda_j(z) - w}, \quad w = E + \ii \eta.
$$
We will also need the following version of the Green function later on:
$$
\mG(w):=\mG(w,z)= ( Y_z Y^*_z-w)^{-1}.  \quad
$$
As we will see,  with high  probability
$m(w,z)$ converges to  $m_{\rm c}(w,z)$ pointwise,
as $N\to \infty$ where $ m_{\rm c}(w,z)$  is  the unique
solution of
 \be\label{defmc1}
 m_{\rm c}^{-1}=-w(1+m_{\rm c})+|z|^2(1+m_{\rm c})^{-1}
\ee
with positive  imaginary  part (see Section 3 in \cite{GotTik2010} for the existence and uniqueness of such a solution).
The limit  $ m_{\rm c}(w,z)$ is the Stieltjes transform of a  density $ \rho_{\rm c} (x,z)$ and   we have
$$
 m_{\rm c}(w,z)= \int_\R \frac{\rho_{\rm c} (x,z)}{x-w}\rd x
$$
whenever   $\eta>0$. The function $\rho_{\rm c} (x,z)$ is the limiting eigenvalue density of the matrix $Y^*_z Y_z$
(cf. Lemmas 4.2 and 4.3 in \cite{Bai1997}).
%This measure is compactly supported and satisfies  the following bounds. 
Let
 \be\label{deflampm}
 \lambda_\pm :=\lambda_{\pm }(z):=\frac{( \al\pm3)^3}{8(\al\pm1)} ,\quad \al:=\sqrt{1+8|z|^2}.
\ee
Note  that $\lambda_-$ has the same sign as $|z|-1$.
%It is well-known that the density $ \rho_{\rm c} (x,z)$  can be obtained from its Stieltjes transform $m_{\rm c}(x+\ii\eta,z)$ via
%$$
% \rho_{\rm c} (x,z)= \frac1\pi\im  \lim_{\eta\to 0^+}m_{\rm c}(x+\ii\eta,z)
%=\frac1\pi\1_{x\in[\max\{0,\lambda_-\},\lambda_+]} \im  \lim_{\eta\to 0^+}m_{\rm c}(x+\ii\eta,z).
%$$
The  following two propositions  summarize  the properties of $\rho_{\rm c}$ and $m_{\rm c}$ that we will need to understand the main results
in this section. They will be  proved in Appendix
\ref{app:macro}.  In the following, we use the notation $A\sim B$ when $c B \le  A\leq c^{-1}B$,
where $c>0$ is independent of $N$.

\begin{proposition} \label{prorhoc}
The limiting density $\rho_{\rm c}$ is compactly supported and
the following properties regarding $\rho_{\rm c}$  hold.
\begin{enumerate}[(i)]
\item
The support of $\rho_{\rm c}(x, z)$ is $[\max\{0,\lambda_-\}, \lambda_+]$.
\item
As $x\to \lambda_+$ from below,  the behavior of $\rho_{\rm c}(x, z)$ is given by
$\rho_{\rm c}(x, z)\sim \sqrt {\lambda_+-x}.
$
\item For any $\e>0$,
if $ \max\{0,\lambda_-\}+\e\leq x \leq \lambda_+-\e$, then   $\rho_{\rm c}(x, z)\sim 1$.
\item
Near  $\max\{0,\lambda_-\}$, the behavior of $\rho_{\rm c}(x, z)$  can be classified as follows.
   \begin{itemize}
 \item If   $|z|\geq 1+\tau$ for some fixed $\tau>0$, then $\lambda_-> \e(\tau) > 0 $ and  $\rho_{\rm c}(x, z)\sim \1_{x>\lambda_-}\sqrt { x-\lambda_-}$.
\item If  $|z|\leq 1-\tau$ for some fixed $\tau>0$,  then  $\lambda_-< - \e(\tau) < 0$ and  $\rho_{\rm c}(x, z)\sim 1/ \sqrt x $.

\end{itemize}

  All of the estimates in this proposition are uniform in $|z|<1-\tau$,   or $\tau^{-1}\ge |z|\ge 1+\tau$ for fixed $\tau>0$.
 \end{enumerate}
  \end{proposition}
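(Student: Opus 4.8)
The plan is to extract all the stated properties of $\rho_{\rm c}$ directly from the self-consistent equation \eqref{defmc1} and the explicit algebraic description of the support edges \eqref{deflampm}. First I would rewrite \eqref{defmc1} as a polynomial equation in $m_{\rm c}$: clearing denominators, $m_{\rm c}$ satisfies a cubic whose coefficients depend on $w$ and $|z|^2$. The edges of the support are exactly the values of $E$ at which the real solution develops a square-root branch point, i.e. where the discriminant of this cubic (viewed in $m_{\rm c}$) vanishes; equivalently, where the map $E \mapsto m_{\rm c}$ has $\mathrm{d}E/\mathrm{d}m_{\rm c} = 0$. Parametrizing $w$ as a function of $m_{\rm c}$ from \eqref{defmc1}, namely
\be
w(m) = \frac{|z|^2}{(1+m)^2} - \frac{1}{m(1+m)},
\ee
one computes $w'(m)$, sets it to zero, and solves the resulting equation; substituting $\al = \sqrt{1+8|z|^2}$ one recovers precisely the formulas $\lambda_\pm = (\al\pm 3)^3 / (8(\al\pm 1))$ in \eqref{deflampm}, and also the statement that $\lambda_-$ has the sign of $|z|-1$ (check $\al = 3 \iff |z| = 1$). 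This gives part (i), with the extra point $0$ in the support precisely when $\lambda_- < 0$, which corresponds to $|z| < 1$; the mass near $0$ comes from the pole structure of $w(m)$ at $m = 0$ and $m = -1$.

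For parts (ii), (iii), (iv), the idea is to do a local expansion of $w(m)$ near the critical points. Near $\lambda_+$: at the corresponding critical $m$-value $m_+$ one has $w'(m_+) = 0$, $w''(m_+) \ne 0$, so $w - \lambda_+ \sim c(m - m_+)^2$ with $c \ne 0$; inverting, $m_{\rm c}(w) - m_+ \sim \sqrt{(w-\lambda_+)/c}$, and since $\rho_{\rm c}(x) = \frac1\pi \lim_{\eta\downarrow 0}\im m_{\rm c}(x+\ii\eta)$ this yields $\rho_{\rm c}(x,z) \sim \sqrt{\lambda_+ - x}$, proving (ii). Part (iii) follows because on any compact subinterval of the interior of the support, $\im m_{\rm c}$ is bounded above and below (the cubic has a genuinely complex conjugate pair of roots there, bounded away from the real axis and from $\infty$ uniformly), which is a continuity/compactness argument once one knows there are no other critical points in the interior. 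Part (iv) splits into two cases: if $|z| \ge 1+\tau$ then $\lambda_- > 0$ is a genuine soft edge, $w'$ vanishes to first order there and one gets the same square-root behavior $\sqrt{x-\lambda_-}$ as at $\lambda_+$, together with a lower bound $\lambda_- \ge \e(\tau) > 0$ obtained by monotonicity of $\lambda_-(|z|)$; if $|z| \le 1-\tau$ then the left edge is at $0$, and the behavior $\rho_{\rm c}(x,z) \sim 1/\sqrt{x}$ comes from analyzing $w(m)$ as $m \to \infty$ (or the relevant branch near the pole), where the leading singular term forces $m_{\rm c}(w) \sim c/\sqrt{w}$ as $w \to 0$, hence $\im m_{\rm c}(x) \sim 1/\sqrt{x}$.

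Finally, the uniformity claims amount to checking that all the constants produced above (the second derivatives $w''$ at the edges, the gaps between critical points, the lower bounds on $\lambda_-$ and on $\lambda_+$) depend only on $|z|$ through continuous, non-degenerate expressions, and are therefore bounded away from their degenerate values on the compact parameter ranges $|z| \le 1-\tau$ and $1+\tau \le |z| \le \tau^{-1}$. The main obstacle, I expect, is purely computational bookkeeping: carrying out the discriminant/critical-point analysis of the cubic cleanly enough to match the closed forms in \eqref{deflampm}, and handling the $|z| = 1$ degeneration carefully — this is exactly the point where $\lambda_- \to 0$ and the soft edge at $\lambda_-$ merges with the hard edge at $0$, so the square-root behavior degenerates, which is why the hypothesis $||z|-1| \ge \tau$ is needed and why the two cases in (iv) must be separated with $\tau$-dependent constants rather than a single uniform statement.
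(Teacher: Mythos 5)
Your proposal is essentially correct, but it follows a genuinely different route from the paper's. The paper solves the cubic for $m_{\rm c}$ explicitly via a Cardano-type formula (Lemma \ref{lem: eemc}), writes $A_\pm(E,z)$ in closed form, extracts $\rho_{\rm c}(E,z)=\frac{1}{2^{4/3}3^{1/2}\pi\sqrt{E}}(A_+^{1/3}-A_-^{1/3})$ in \eqref{rhoc=}, and then simply reads off parts (i)--(iv) from this closed form; the whole proof of Proposition \ref{prorhoc} is essentially one line once the algebra is done. You instead invert the self-consistent equation to get the rational parametrization $w(m)=\frac{|z|^2}{(1+m)^2}-\frac{1}{m(1+m)}$, locate the edges as critical values of $w$, and expand locally near the critical points and near $m\to\infty$. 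Both strategies work; the paper's is more explicit (and the closed form \eqref{rhoc=} is reused downstream), while yours is more structural, avoids cube roots, and transfers to settings where no Cardano formula is available. In fact the paper itself uses exactly your inverse-function expansion in the proof of Lemma \ref{pmcc1} (Cases 2 and 3), so the two approaches are really complementary rather than competing.

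Two small points worth fixing. First, in part (i) you attribute the left-edge behavior for $|z|<1$ to ``the pole structure of $w(m)$ at $m=0$ and $m=-1$''; those poles correspond to $w\to\infty$, not $w\to 0$. The $1/\sqrt{x}$ singularity at the hard edge comes from the $m\to\infty$ regime, $w(m)\sim (|z|^2-1)/m^2$, which you in fact state correctly later in part (iv), so this is only a phrasing slip. Second, for part (iii) the compactness argument needs the additional check that $\rho_{\rm c}>0$ on the open interior, i.e.\ that $w(m)$ has no other real critical values in $(\max\{0,\lambda_-\},\lambda_+)$; you flag this but should note that it is immediate once the critical-point equation $w'(m)=0$ has been solved and shown to have exactly the two relevant roots corresponding to $\lambda_\pm$ (which is the same algebra that produced \eqref{deflampm}). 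Neither issue is a genuine gap.
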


\begin{proposition}\label{tnf}
The preceding Proposition  implies that,  uniformly in $w$ in any compact set,
$$
 |m_{\rm c}(w,z)|=\OO(|w|^{-1/2}  )
$$
 Moreover,
 the following estimates on $m_{\rm c}(w,z)$ hold.
 \begin{itemize}
 \item If   $|z|\geq 1+\tau$ for some fixed $\tau>0$,   then $m_{\rm c}\sim 1$   for $w$ in any compact set.
\item If  $|z|\leq 1-\tau$ for some fixed $\tau>0$,   then $m_{\rm c}\sim |w|^{-1/2} $  for $w$ in any compact set.
\end{itemize}
\end{proposition}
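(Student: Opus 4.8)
The plan is to derive Proposition \ref{tnf} directly from the self-consistent equation \eqref{defmc1} together with the structural information about $\rho_{\rm c}$ provided by Proposition \ref{prorhoc}. For the uniform bound $|m_{\rm c}(w,z)|=\OO(|w|^{-1/2})$, I would use the integral representation $m_{\rm c}(w,z)=\int_\R \rho_{\rm c}(x,z)(x-w)^{-1}\rd x$. Since $\rho_{\rm c}$ is a probability density supported in $[\max\{0,\lambda_-\},\lambda_+]$ with $\lambda_+$ bounded above (by Proposition \ref{prorhoc}(i) and \eqref{deflampm}), the only place where the bound is delicate is near $w=0$. There the relevant estimate comes from Proposition \ref{prorhoc}(iv): in the regime $|z|\le 1-\tau$, near $x=0$ one has $\rho_{\rm c}(x,z)\sim 1/\sqrt{x}$, and $\int_0^c x^{-1/2}|x-w|^{-1}\rd x=\OO(|w|^{-1/2})$ by a scaling argument (split the integral at $|x|\sim|w|$); in the regime $|z|\ge 1+\tau$ the density is bounded and supported away from $0$ (since $\lambda_->\e(\tau)>0$), so $m_{\rm c}$ is simply bounded, which is stronger than $\OO(|w|^{-1/2})$ when $|w|$ is small, and for large $|w|$ in a compact set this is trivial anyway.

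For the two refined statements I would look at the equation \eqref{defmc1} itself. Write it as $m_{\rm c}^{-1}=-w(1+m_{\rm c})+|z|^2(1+m_{\rm c})^{-1}$. In the case $|z|\ge 1+\tau$, I expect $m_{\rm c}$ to stay bounded and bounded away from $0$ uniformly for $w$ in a compact set: the claim $m_{\rm c}\sim 1$ should follow by showing that $m_{\rm c}$ cannot approach $0$ (else the left side blows up while the right side stays bounded, once we know $m_{\rm c}$ is bounded) and cannot blow up (else rearrange to get a contradiction, using that $\lambda_-$ is bounded away from $0$ so $m_{\rm c}(0,z)$ is finite, and continuity in $w$). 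Concretely I would first establish boundedness of $m_{\rm c}$ via the integral representation and the support being away from the origin, then plug that into \eqref{defmc1} to get the lower bound $|m_{\rm c}|\gtrsim 1$. In the case $|z|\le 1-\tau$, the integral representation with $\rho_{\rm c}(x,z)\sim 1/\sqrt{x}$ near $0$ gives the upper bound $|m_{\rm c}|\lesssim |w|^{-1/2}$; for the matching lower bound I would use \eqref{defmc1}: if $|m_{\rm c}|$ were much smaller than $|w|^{-1/2}$ then $m_{\rm c}^{-1}$ is much larger than $|w|^{1/2}$, while the right side $-w(1+m_{\rm c})+|z|^2(1+m_{\rm c})^{-1}$ is $\OO(1)$ when $|w|\lesssim 1$; so $|m_{\rm c}^{-1}|=\OO(1)$, hence $|m_{\rm c}|\gtrsim 1 \gtrsim |w|^{-1/2}$ only when $|w|\gtrsim 1$, so I need to be more careful — I should instead track the balance of the terms $-w m_{\rm c}$ and $|z|^2 m_{\rm c}^{-1}$ (the dominant ones when $|m_{\rm c}|$ is large and $|w|$ is small), which forces $-w m_{\rm c}^2\approx |z|^2$, i.e. $|m_{\rm c}|\sim |w|^{-1/2}$ with the implicit constant controlled by $\tau$ through $|z|^2\in[\text{const}(\tau),1]$.

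The main obstacle I anticipate is making the two-sided bounds in the regime $|z|\le 1-\tau$ genuinely uniform in $w$ over the whole compact set, including both $|w|$ small (where the $|w|^{-1/2}$ behavior is active and the branch of the cubic/quadratic defining $m_{\rm c}$ must be tracked) and $|w|$ of order one (where $m_{\rm c}\sim 1$ and the statement $m_{\rm c}\sim|w|^{-1/2}$ is just saying $m_{\rm c}$ is bounded above and below). Handling the case where $w$ is near the spectral edge $\lambda_+$ or near $\lambda_-$ (when $\lambda_-<0$, the support reaches $0$), and near the real axis where $\eta\downarrow 0$, requires the square-root edge behavior from Proposition \ref{prorhoc}(ii); but since the statement only concerns compact sets and not the precise edge scaling, a crude bound there suffices. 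Throughout, the clean way to organize this is: (a) prove the global $\OO(|w|^{-1/2})$ bound from the integral representation; (b) prove boundedness-away-from-zero of $m_{\rm c}$ in the $|z|\ge 1+\tau$ case and boundedness-above in both cases from \eqref{defmc1}; (c) extract the lower bound $|m_{\rm c}|\gtrsim|w|^{-1/2}$ in the $|z|\le 1-\tau$ case by a dominant-balance analysis of \eqref{defmc1}. I would defer all of the genuinely computational parts to Appendix \ref{app:macro} as the statement of the proposition indicates, since the argument is elementary once Proposition \ref{prorhoc} is in hand.
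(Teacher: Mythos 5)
Your approach is correct and overlaps substantially with what the paper actually does, but the emphasis is different. The paper never gives a standalone proof of Proposition~\ref{tnf}: it is absorbed into the finer case-by-case Lemmas~\ref{pmcc1}, \ref{pmcc12}, \ref{pmcc12.5}, which are proved in Appendix~\ref{app:macro} by first writing down the explicit Cardano-type formula for $m_{\rm c}$ (Lemma~\ref{lem: eemc}) and then combining that with the Stieltjes-transform representation and local expansion of \eqref{defmc1} near $w=0$ and near $\lambda_\pm$. Your route avoids the explicit cubic formula entirely and works from the integral representation plus a dominant-balance reading of \eqref{defmc1}; this is leaner and in the same spirit as the paper's treatment of Cases~3 and 4 of Lemma~\ref{pmcc1}, though it does not produce the explicit constants that the later stability lemmas rely on. Both arguments are valid for the level of precision Proposition~\ref{tnf} actually asserts.

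Two technical points deserve tightening. First, the ``split at $|x|\sim|w|$'' estimate for $\int_0^c x^{-1/2}|x-w|^{-1}\,\rd x$ is not quite right as stated: when $E=\re w>0$ and $\eta=\im w\ll E$, the middle region $|x-E|\lesssim E$ contributes a factor $E^{-1/2}\log(E/\eta)$, which blows up as $\eta\downarrow 0$. The bound $|m_{\rm c}(w)|=\OO(|w|^{-1/2})$ is still correct, but you need to exploit cancellation in the real part (e.g.\ the closed form $\int_0^c\frac{\rd x}{\sqrt{x}(x-w)}=\frac{1}{\sqrt{w}}\log\frac{\sqrt{c}-\sqrt{w}}{\sqrt{c}+\sqrt{w}}+\ii\pi/\sqrt{w}$) or bound $\im m_{\rm c}$ and $\re m_{\rm c}$ separately, with the real part treated as a principal value. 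A pure absolute-value bound loses a logarithm. Second, the dominant balance you write, $-w m_{\rm c}^2\approx |z|^2$, drops the term $m_{\rm c}^{-1}$ on the left of \eqref{defmc1}, but that term is of the same order $|w|^{1/2}$ as the two you keep when $|m_{\rm c}|\sim|w|^{-1/2}$. The correct balance is $(1-|z|^2)m_{\rm c}^{-1}\approx -w m_{\rm c}$, i.e.\ $m_{\rm c}^2\approx -(1-|z|^2)/w$, which is precisely what \eqref{A20} records. Your conclusion $|m_{\rm c}|\sim|w|^{-1/2}$ is unaffected because both $|z|^2$ and $1-|z|^2$ are bounded above and below by $\tau$-dependent constants, but the intermediate formula should be fixed so as not to mislead the reader about the leading coefficient.
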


\subsection{Concentration estimate of the Green function up to  the optimal scale. }
We now state  precisely the estimate regarding  the convergence of  $m$ to $m_{\rm c}$.
Since the matrix $Y^*_z Y_z$ is symmetric, we will follow the approach of \cite{ErdYauYin2010Adv}.
We will use extensively the following definition of high probability events.

\begin{definition}[High probability events]\label{def:hp}
 Define
\be\label{phi}
\varphi\;\deq\; (\log N)^{\log\log N}\,.
\ee
Let $\zeta> 0$.
We say that an $N$-dependent event $\Omega$ holds with \emph{$\zeta$-high probability} if there is some constant $C$ such that
$$
\P(\Omega^c) \;\leq\; N^C \exp(-\varphi^\zeta)
$$
for large enough $N$.
\end{definition}

For $  \alpha \geq 0$,  define the $z$-dependent  set
\begin{equation}\label{eqn:Salpha}
\b S(\alpha)\;\deq\; \hb{w \in \C \st  \max (\lambda_-/5, 0) \le   E \leq 5\lambda_+ \,,\;     \varphi^\alpha N^{-1} |m_{\rm c}|^{-1}\leq \eta \leq 10 },
\end{equation}
 where $\varphi$ is defined in \eqref{phi}.  Here  we have  suppressed the explicit $z$-dependence. {  Notice that for  $|z|<1-\e$, as $|m_{\rm c}|\sim|\omega|^{-1/2}$  we allow  $\eta \sim |w| \sim {N^{-2} \varphi^{2\alpha}}$ in the set $\b S(\alpha)$.
This is a key feature of our approach which shows that the Green function estimates hold until a scale much smaller than the typical
$N^{-1}$ value of $\eta$.}

 \begin{theorem}  [Strong local Green function estimates] \label{sempl} Suppose  $\tau\le ||z|-1|\le \tau^{-1} $    for some  $\tau >0$ independent of $N$.
Then  for any $\zeta>0$, there exists $C_\zeta>0$ such that  the following event  holds  \hp{\zeta}:
 \be\label{res:sempl}
\bigcap_{w \in{ {\b {\rm S}}}(C_\zeta)} \hbb{|m(w)-m_{\rm c}(w)|  \leq \varphi^{C_\zeta} \frac{1}{N\eta}}.
 \ee
Moreover,
the individual  matrix elements of
the Green function  satisfy, \hp{\zeta},
\begin{align}\label{Lambdaofinal}
\bigcap_{w \in{ {\b {\rm S}}}(C_\zeta)} \hbb{\max_{ij}\left|G_{ij}-m_{\rm c}\delta_{ij}\right| \leq \varphi^{C_\zeta} \left(\sqrt{\frac{\im\,  m_{\rm c}  }{N\eta}}+ \frac{1}{N\eta}\right)}.
\end{align}
\end{theorem}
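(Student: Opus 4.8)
The plan is to follow the self-consistent equation / fluctuation averaging scheme of \cite{ErdYauYin2010Adv}, adapted to the covariance-type matrix $Y_z^*Y_z$. Write $H = Y_z^* Y_z$ and let $G = G(w) = (H-w)^{-1}$. The starting point is the Schur complement formula: for each $k$, one has
\be\label{schur}
G_{kk} = \frac{1}{ -w(1+ \frac{1}{N}\tr^{(k)} \mathcal G) + |z|^2 (\text{correction}) - \mathcal Z_k },
\ee
where $\mathcal Z_k$ collects the fluctuating terms (quadratic forms in the $k$-th row/column of $Y_z$ minus their expectation, plus off-diagonal resolvent contributions). Using the structure $Y_z = X - zI$, the relevant quadratic forms involve the rows of $X$ together with the deterministic shift $-z$, so the self-consistent equation that emerges is exactly \eqref{defmc1} governing $m_{\rm c}$. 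The key a priori input is that $Y_z^*Y_z$ and $Y_zY_z^*$ share the same nonzero spectrum, so one can shuttle between $G$ and $\mathcal G$; and the eigenvalue density estimates of Proposition~\ref{prorhoc}, in particular $\rho_{\rm c}(x,z)\sim 1/\sqrt x$ near $0$ when $|z|\le 1-\tau$, which controls $\im m_{\rm c}$ from above and below on $\b S(\alpha)$.

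**Main steps.** First I would establish a weak bound $\Lambda := \max_{ij}|G_{ij} - m_{\rm c}\delta_{ij}| \prec \varphi^{C}(N\eta)^{-1/4}$ (or some such polynomially weak bound) on a suitable subset of $\b S(\alpha)$, via a continuity/bootstrap argument in $\eta$ starting from the trivial bound at $\eta\sim 1$ and decreasing $\eta$ along a fine grid, using that $\Lambda$ is Lipschitz in $w$ with polynomial constant. Second, with this weak control in hand, I would feed it into the large deviation bounds for quadratic forms (the subexponential decay \eqref{subexp} gives the needed concentration for $\sum_{ij} a_i \bar a_j G_{ij}$-type sums) to get the self-improving estimate: the off-diagonal entries are controlled by $\sqrt{\im m_{\rm c}/(N\eta)} + (N\eta)^{-1}$ and the diagonal fluctuation $\mathcal Z_k$ is of the same size. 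Third — the crucial fluctuation-averaging step — I would show that the averaged quantity $\frac1N\sum_k \mathcal Z_k$ is much smaller than individual $\mathcal Z_k$, namely of order $\varphi^C/(N\eta)$, by carefully expanding products $\mathcal Z_k \mathcal Z_\ell$ and using independence of distinct rows plus resolvent identities (the standard $|(G_{k\ell})|^2$-gain mechanism). This upgrades the control on $m - m_{\rm c}$ to the optimal $\varphi^{C}/(N\eta)$ in \eqref{res:sempl}. Finally, the stability of the self-consistent equation \eqref{defmc1} — which must be quantified separately near the edge $\lambda_+$, near the lower edge $\max\{0,\lambda_-\}$, and near $0$ in the $|z|<1$ regime, using the square-root and inverse-square-root behaviors from Proposition~\ref{prorhoc} — converts the smallness of $\frac1N\sum_k\mathcal Z_k$ into the smallness of $|m-m_{\rm c}|$, and closes the loop.

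**The main obstacle.** The hardest part is controlling the Green function down to $\eta$ as small as $\varphi^{2\alpha} N^{-2}$ in the regime $|z|<1-\tau$, i.e. well below the naive scale $N^{-1}$. This is possible only because near the spectral edge $0$ the density blows up like $1/\sqrt x$, so the local eigenvalue spacing there is $\sim N^{-2}$ rather than $N^{-1}$, and correspondingly $|m_{\rm c}|\sim|w|^{-1/2}$ is large, which is why the set $\b S(\alpha)$ is defined with the threshold $\varphi^\alpha N^{-1}|m_{\rm c}|^{-1}$ rather than $\varphi^\alpha N^{-1}$. Making the self-consistent analysis and, especially, the fluctuation-averaging estimate uniform down to this scale requires tracking the $|m_{\rm c}|$-dependence (equivalently the $|w|$-dependence) in every error term, since the "good" control parameter is effectively $\im m_{\rm c}/(N\eta)$ and one must verify it stays $\ll 1$ throughout $\b S(\alpha)$. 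A secondary technical difficulty is the stability analysis of \eqref{defmc1} at the three qualitatively different points of the spectrum, each of which demands its own expansion of the cubic (in $m_{\rm c}$) equation; I expect to isolate these as separate lemmas, relying on the edge behavior recorded in Proposition~\ref{prorhoc} and the bounds in Proposition~\ref{tnf}.
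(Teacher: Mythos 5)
Your strategy follows the paper's route almost exactly: weak a priori bound via continuity in $\eta$ from $\eta\sim 1$, Schur complement / self-consistent cubic equation in $m$, large-deviation estimates for the fluctuating quadratic forms $\mathcal Z_k$, fluctuation averaging of $\frac1N\sum_k\mathcal Z_k$, stability analysis of the cubic in the bulk / near $\lambda_\pm$ / near $w=0$, and careful tracking of $|m_{\rm c}|\sim|w|^{-1/2}$ throughout. The identification of the $\b S(\alpha)$-threshold as the reason one can push to $\eta\sim N^{-2}$ when $|z|<1-\tau$ is exactly right.

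There is, however, one genuine gap in the closing step. You claim the fluctuation-averaging estimate yields $|\frac1N\sum_k\mathcal Z_k|\lesssim\varphi^C/(N\eta)$, which then "upgrades the control on $m-m_{\rm c}$ to the optimal $\varphi^C/(N\eta)$." But what fluctuation averaging actually gives (Lemma \ref{lem:32y} in the paper) is a bound of order $|w|^{1/2}\widetilde\Psi^2$ with $\widetilde\Psi^2\sim(\im m_{\rm c}+\widetilde\Lambda)/(N\eta)$, where $\widetilde\Lambda$ is whatever \emph{prior} bound on $\Lambda$ you fed in. In the bulk $\im m_{\rm c}\sim|m_{\rm c}|$ so one pass closes. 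But near the edge $\lambda_+$ one has $\im m_{\rm c}\ll|m_{\rm c}|$, and the stability estimate \eqref{ag24} in the regime $\kappa+\eta\lesssim\delta$ gives $\Lambda\lesssim\sqrt\delta\lesssim\sqrt{\widetilde\Lambda/(N\eta)}$. Starting from the weak bound $\widetilde\Lambda\sim|m_{\rm c}|$ this returns only $\Lambda\lesssim(N\eta)^{-1/2}$; each further pass improves the exponent as $\alpha\mapsto(\alpha+1)/2$, so you must iterate $K\sim\log\log N$ times. This iteration is not free, because each application of the averaging lemma degrades the probability exponent by a $(\log N)^{-2}$ factor; this is precisely why the paper works with $\varphi=(\log N)^{\log\log N}$ rather than a fixed power of $\log N$, so that $\varphi^{\zeta+5}(\log N)^{-2K}\geq\varphi^\zeta$ survives all $K$ steps. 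Your proposal, as written, omits this iteration and the accompanying bookkeeping of the probability loss, so the final conclusion would not follow from a single application of fluctuation averaging in the edge regime.
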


\section{ Properties of $\rho_{\rm c}$ and $m_{\rm c}$}\label{sec:pro}
We have introduced some basic properties of $\rho_{\rm c}$ and $m_{\rm c}$ in Proposition \ref{prorhoc} and \ref{tnf}.
In this section, we collect some more useful properties used in this paper,
proved in  Appendix
\ref{app:macro}. Recall that $w = E + \ii \eta$,    $\al=\sqrt{1+8|z|^2}$ from \eqref{deflampm}, and  define  $\kappa:= \kappa (w, z) $ as the distance from $E$ to $\{\lambda_+, \lambda_-\}$:
\be\label{37}
\kappa=\min\{|E-\lambda_-|, |E-\lambda_+|\}.
\ee
For $|z| <  1$, we have $\lambda_- <  0$ { (see Proposition \ref{prorhoc})}, so in this case we define $\kappa:=|E-\lambda_+|$.

\begin{lemma} \label{pmcc1}   There exists $\tau_0>0$ such that for  any $\tau \le \tau_0$ if   $|z|\leq 1-\tau$   and $|w|\leq \tau^{-1} $
then the  following properties concerning  $m_{\rm c}$ hold.  All constants in the following estimates  depend on $\tau$.
\begin{itemize}
\item[Case 1:] $E\geq \lambda_+$ and  $|w-\lambda_+|\ge \tau $. We have
 \be\label{A17}
|\re m_{\rm c}|\sim 1, \quad -\frac{1}{2}\leq  \re m_{\rm c} <0 , \quad \im m_{\rm c}\sim \eta.
 \ee
\item[Case 2:]  $|w-\lambda_+|\le \tau$  (Notice that there is no restriction on whether $E\leq \lambda_+$ or not ). We have
\be\label{A18}
m_{\rm c}(w, z)=- \frac2{3+\al} +  \sqrt{\frac{8(1+\al)^3}{\al(3+\al)^5}}\, (w-\lambda_+ )^{1/2} +\OO(\lambda_+-w), \ee
 and
 \begin{align}\label{esmallfake}
\im m_{\rm c}\sim & \left\{\begin{array}{cc}
 \frac{\eta}{\sqrt{ \kappa}} & \mbox{if\  $\kappa\ge\eta$ and $E\ge \lambda_+$,} \\  & \\
\sqrt{ \eta} & \mbox{if\ $\kappa\le \eta$ or $E\le \lambda_+$.}
\end{array}
\right.
\end{align}
\item[Case 3:] $|w|\le  \tau  $.   We have
\be\label{A20}
 m_{\rm c}(w,z)=\ii\frac{(1-|z|^2)}{\sqrt w} +\frac{1-2|z|^2}{2|z|^2-2}+\OO(\sqrt w)
\ee
as $w\to 0$, and
\be\label{A20a}
\im m_{\rm c}(w,z)\sim |w|^{-1/2}.
\ee
\item[Case 4:]  $|w|\ge  \tau$, $|w-\lambda_+|\ge \tau$  and  $E\leq \lambda_+$.  We have
\be\label{A21}
|m_{\rm c}|\sim 1,\quad \im m_{\rm c}\sim 1.
\ee
\end{itemize}
\end{lemma}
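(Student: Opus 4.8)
The plan is to analyze the scalar self-consistent equation \eqref{defmc1} directly in each of the four regimes, treating $m_{\rm c}$ as a function of $w$ with $z$ as a parameter satisfying $|z| \le 1-\tau$. Rewriting \eqref{defmc1} as a cubic, $w(1+m_{\rm c})^2 m_{\rm c} + w(1+m_{\rm c})m_{\rm c} \cdot(\dots)$—more usefully, multiplying through by $m_{\rm c}(1+m_{\rm c})$ gives
\be\label{cubicplan}
(1+m_{\rm c}) = -w\,m_{\rm c}(1+m_{\rm c})^2 + |z|^2 m_{\rm c},
\ee
a cubic in $m_{\rm c}$ whose discriminant vanishes exactly at $w \in \{\lambda_+, \lambda_-\}$; the numbers $\lambda_\pm$ in \eqref{deflampm} are precisely the branch points, with $\al = \sqrt{1+8|z|^2}$ controlling their location. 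Proposition \ref{prorhoc} already tells us $\supp\rho_{\rm c} = [0,\lambda_+]$ (since $\lambda_- < 0$ here) and the square-root edge behavior at $\lambda_+$ together with the $1/\sqrt x$ blow-up at $0$; the content of Lemma \ref{pmcc1} is to convert these density statements into quantitative two-sided bounds on $\re m_{\rm c}$ and $\im m_{\rm c}$ via the Stieltjes transform representation $m_{\rm c}(w) = \int \rho_{\rm c}(x)/(x-w)\,\rd x$.

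For \textbf{Case 1} ($E \ge \lambda_+$, bounded away from the edge and with $|w| \le \tau^{-1}$ but also—implicitly—$|w|$ not too small since we are to the right of $\lambda_+>0$) the point $w$ is at distance $\gtrsim \tau$ from the whole support, so I would Taylor-expand the integral: $\re m_{\rm c} = \int \rho_{\rm c}(x)(x-E)/((x-E)^2+\eta^2)\,\rd x$ is strictly negative and $\sim -1$ because $E$ lies to the right of a unit-mass density concentrated at distance $O(1)$, and $\im m_{\rm c} = \eta \int \rho_{\rm c}(x)/((x-E)^2+\eta^2)\,\rd x \sim \eta$ because the denominator is $\sim 1$ uniformly on the support; the bound $\re m_{\rm c} \ge -1/2$ comes from the same integral being dominated by $\int \rho_{\rm c}/(E-\lambda_+) \le 1/\tau$ wait—more carefully, one uses that the bulk of the mass sits at distance at least of order one, combined with convexity, or simply solves \eqref{cubicplan} perturbatively around the real root. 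For \textbf{Case 2} (near $\lambda_+$) I would plug the ansatz $m_{\rm c} = c_0 + c_1(w-\lambda_+)^{1/2} + \dots$ into the cubic \eqref{cubicplan}: the constant $c_0 = -2/(3+\al)$ is the value of the double root at the branch point, obtained by setting the discriminant to zero, and $c_1$ is then forced by matching the $(w-\lambda_+)^{1/2}$ terms, giving the square root of $8(1+\al)^3/(\al(3+\al)^5)$ as claimed in \eqref{A18}; the imaginary-part dichotomy \eqref{esmallfake} is the standard edge behavior of a square-root singularity, read off from $\im\big(c_1 (w-\lambda_+)^{1/2}\big)$ according to whether $\eta$ or $\kappa$ dominates. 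For \textbf{Case 3} ($|w| \le \tau$, near the hard edge at $0$) I would use the explicit solution of the cubic near $w=0$: since $\rho_{\rm c}(x) \sim 1/\sqrt x$ there, $m_{\rm c}(w) \sim \int_0 x^{-1/2}/(x-w)\,\rd x \sim w^{-1/2}$, and the precise leading coefficient $\ii(1-|z|^2)/\sqrt w$ plus the constant term $(1-2|z|^2)/(2|z|^2-2)$ come from expanding \eqref{defmc1} for small $w$: substituting $m_{\rm c} = a/\sqrt w + b + O(\sqrt w)$ into $m_{\rm c}^{-1} = -w(1+m_{\rm c}) + |z|^2(1+m_{\rm c})^{-1}$ and matching powers of $w^{1/2}$ determines $a^2 = -(1-|z|^2)^2$ hence $a = \ii(1-|z|^2)$ (positive imaginary part fixes the sign), and then the next order fixes $b$. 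Finally \textbf{Case 4} ($|w|$ and $|w-\lambda_+|$ both $\gtrsim \tau$, $E \le \lambda_+$) is the ``inside the bulk'' regime: $E$ is at distance $\gtrsim \tau$ from both edges, so by Proposition \ref{prorhoc}(iii) $\rho_{\rm c} \sim 1$ near $E$ and the standard bulk estimate $\im m_{\rm c} \sim \rho_{\rm c}(E) \sim 1$ and $|m_{\rm c}| \sim 1$ follow from the Stieltjes transform integral with a Poisson-kernel lower bound on a unit interval around $E$.

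I expect the main obstacle to be \textbf{Case 3}, the hard-edge analysis near $w=0$, because there $m_{\rm c}$ itself blows up like $w^{-1/2}$, so the cubic \eqref{cubicplan} is genuinely singular and one must be careful about which branch of $\sqrt w$ is selected and about uniformity of the error term $\OO(\sqrt w)$ in $z$ as $|z| \to 1^-$ (the coefficient $(1-|z|^2)$ degenerates, which is why the hypothesis $|z| \le 1-\tau$ is essential and all constants are allowed to depend on $\tau$). The bookkeeping of matching asymptotic expansions to the stated precision in \eqref{A18} and \eqref{A20}—rather than just getting the $\sim$ order of magnitude—is the tedious part, but it is routine once the correct ansatz is in hand; I would organize it by first locating the branch points from the discriminant of the cubic, then performing a Puiseux expansion at each, and finally verifying the imaginary-part bounds by splitting into $\kappa \gtrless \eta$. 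Throughout, the $z$-uniformity claimed at the end of Proposition \ref{prorhoc} is inherited because $\al = \sqrt{1+8|z|^2}$ ranges over a compact subinterval of $(1,3)$ when $|z| \le 1-\tau$, so every coefficient appearing above is bounded and bounded away from its degenerate values.
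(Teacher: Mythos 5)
Your route is essentially the paper's: Cases~1 and~4 via the Stieltjes transform representation $m_{\rm c}(w)=\int\rho_{\rm c}(x)/(x-w)\,\rd x$, Case~2 via a Puiseux expansion around the double root at $\lambda_+$, and Case~3 via a singular expansion of \eqref{defmc1} around $w=0$. There are, however, two places where your sketch would not compile into a correct proof.

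In Case~1 the bound $\re m_{\rm c}\ge-\tfrac12$ does not come from the mass being at distance of order one, nor from convexity; indeed the constraint $|w-\lambda_+|\ge\tau$ allows $E$ arbitrarily close to $\lambda_+$ (with $\eta$ then of order $\tau$), so a distance-based argument fails. The clean observation is monotonicity of $\int\rho_{\rm c}(x)/(x-E)\,\rd x$ in $E$ over $E\ge\lambda_+$: since $(x-E)^{-1}\ge(x-\lambda_+)^{-1}$ for $x\le\lambda_+\le E$,
\[
\re m_{\rm c}(E+\ii\eta)\;\ge\;\int\frac{\rho_{\rm c}(x)}{x-E}\,\rd x\;\ge\;\int\frac{\rho_{\rm c}(x)}{x-\lambda_+}\,\rd x\;=\;m_{\rm c}(\lambda_+,z)\;=\;-\frac{2}{3+\al}\;\ge\;-\frac12,
\]
using $\al=\sqrt{1+8|z|^2}\ge1$. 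Your ``dominated by $\int\rho_{\rm c}/(E-\lambda_+)\le1/\tau$'' phrase is in the direction of an upper bound on $|\re m_{\rm c}|$, not the required lower bound.

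In Case~3 the stated conclusion of the matching is incorrect. Plugging $m_{\rm c}=aw^{-1/2}+b+\OO(\sqrt w)$ into \eqref{defmc1} and collecting the coefficient of $\sqrt w$ gives $a^{-1}=-a+|z|^2a^{-1}$, i.e.\ $a^2=|z|^2-1=-(1-|z|^2)$, hence $a=\ii\sqrt{1-|z|^2}$; your claimed $a^2=-(1-|z|^2)^2$ does not follow from the computation you describe. (A quick check at $|z|^2=\tfrac12$: the cubic becomes $1+\tfrac12 m_{\rm c}=-wm_{\rm c}(1+m_{\rm c})^2$, forcing $\tfrac12a=-a^3$ and $a^2=-\tfrac12$.) The leading coefficient in \eqref{A20} as printed appears to be off by a square root; the next-order coefficient $b=(2|z|^2-1)/(2-2|z|^2)$ does match. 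This does not propagate, since only the order-of-magnitude statement \eqref{A20a} is used downstream and that holds for either normalization, but you should present the matching correctly rather than reverse-engineer the printed formula.
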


Here Case 1 covers the regime where $E \ge \lambda_+$ and $w$ is far away from $\lambda_+$. Case 2 concerns the regime
that $w$ is near $\lambda_+$, while  Case 3 is for $w$ is near the origin. Finally Case 4 is for $w$ not covered by the first three cases.

 \begin{lemma}\label{pmcc12}  There exists $ \tau_0>0$
such that for  any $\tau \le \tau_0$, if   $|z|\geq 1+\tau$ and $|w|\le \tau^{-1}$ then
the following properties concerning  $m_{\rm c}$ hold.   All constants in the following estimates  depend on $\tau$.
Recall from  \eqref{deflampm}  that
$\lambda_-=\frac{( \al-3)^3}{8(\al-1)} >0$.
\begin{itemize}
\item[Case 1:] $E\geq \lambda_+$ and   $|w-\lambda_+|\ge \tau$. We have
$$
|\re m_{\rm c}|\sim 1,\quad -\frac{1}{2}\leq  \re m_{\rm c}<0 , \quad \im m_{\rm c}\sim \eta.
$$
\item[Case 2:]  $E\le \lambda_-$ and  $|w-\lambda_-|\ge \tau$. We have
$$
|\re m_{\rm c}|\sim 1,\quad 0\leq  \re m_{\rm c}, \quad \im m_{\rm c}\sim \eta.
$$
\item[Case 3:]  $|\kappa+\eta|\le \tau$. We have
$$
m_{\rm c}(w, z)=  \frac2{-3\mp\al} +  \sqrt{\frac{8(\pm1+\al)^3}{\pm\al(\pm3+\al)^5}}\, (w-\lambda_\pm )^{1/2} +\OO(\lambda_\pm-w),
$$
{
\begin{align}
\im m_{\rm c}\sim & \left\{\begin{array}{cc}
 \frac{\eta}{\sqrt{ \kappa}} & \mbox{if  $\kappa\ge\eta$ and $E\notin [\lambda_-, \lambda_+]$,} \\  & \\
\sqrt{\eta} & \mbox{if $\kappa\le \eta$ or $E\in  [\lambda_-, \lambda_+]$.}
\end{array}
\right.
\end{align}
}
\item[Case 4:]  $|w|\ge \tau$,  $|w-\lambda_+|\ge  \tau$  and  $\lambda_- \le E\leq \lambda_+$.  We have
$$
|m_{\rm c}|\sim 1,\quad \im m_{\rm c}\sim 1.
$$
\end{itemize}
  \end{lemma}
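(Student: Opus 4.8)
The plan is to reduce the defining relation \eqref{defmc1} to a polynomial equation and then analyze the relevant branch of its solution in each of the four $w$-regimes. Clearing denominators in \eqref{defmc1}, one checks that $m \deq m_{\rm c}(w,z)$ is a root of the cubic
\be
P(m,w)\deq w m^3 + 2 w m^2 + (1+w-|z|^2)\,m + 1 = 0 ,
\ee
and $m_{\rm c}$ is singled out among the (at most three) roots as the unique one with $\im m_{\rm c}>0$ for $\eta>0$ (existence and uniqueness being recalled from \cite{GotTik2010}), extended continuously to $\eta=0$, where it equals the Stieltjes transform $\int_{\mathbb R}\rho_{\rm c}(x,z)(x-w)^{-1}\rd x$. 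The spectral edges are exactly the real $w$ at which $P(\cdot,w)$ has a double root; solving $P=\partial_m P=0$ and writing $\alpha=\sqrt{1+8|z|^2}$ recovers both $\lambda_\pm=\frac{(\alpha\pm3)^3}{8(\alpha\pm1)}$ and the double-root values $m_{\rm c}(\lambda_\pm,z)=\frac{2}{-3\mp\alpha}$. Since $|z|\ge1+\tau$ forces $\alpha\ge\sqrt{1+8(1+\tau)^2}>3$, both $\lambda_\pm$ are strictly positive, $\lambda_+-\lambda_-\gtrsim1$, and $0$ lies strictly left of the support $[\lambda_-,\lambda_+]$; this is why, in contrast to Lemma \ref{pmcc1}, no separate case for $w$ near the origin is needed here.

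For \textbf{Case 3} (near an edge, $|\kappa+\eta|\le\tau$) one expands $P=0$ about the branch point $w=\lambda_\pm$, where $\partial_m P=0$ while $\partial_m^2P\neq0$ and $\partial_wP\neq0$. The Newton--Puiseux expansion then gives
\be
m_{\rm c}(w,z)=\frac{2}{-3\mp\alpha}+c_\pm\,(w-\lambda_\pm)^{1/2}+\OO(|w-\lambda_\pm|),
\ee
where $c_\pm^2=-2\,\partial_wP/\partial_m^2P$ at $(m_{\rm c}(\lambda_\pm,z),\lambda_\pm)$; since $\partial_wP=m(1+m)^2$ and $\partial_m^2P=2w(3m+2)$, inserting the edge value yields $c_\pm=\pm\sqrt{\tfrac{8(\pm1+\alpha)^3}{\pm\alpha(\pm3+\alpha)^5}}$, with the branch of the square root — hence the sign, and for the $-$ edge the fact that $c_-$ is purely imaginary — fixed by the requirement $\im m_{\rm c}>0$ for $\eta>0$. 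Substituting $w=E+\ii\eta$ and using $|\kappa+\eta|\le\tau$ to dominate the remainder, the asserted behaviour of $\im m_{\rm c}$ follows from that of $\im\bigl(c_\pm(w-\lambda_\pm)^{1/2}\bigr)$: of order $\eta/\sqrt\kappa$ when $E\notin[\lambda_-,\lambda_+]$ and $\kappa\ge\eta$, and of order $\sqrt{\kappa+\eta}$ otherwise. That $c_\pm\sim1$, with constants depending on $\tau$, is precisely where $|z|\ge1+\tau$ enters, since $c_-$ degenerates as $\lambda_-\to0$ when $|z|\downarrow1$.

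For \textbf{Cases 1, 2 and 4} ($w$ bounded away from both branch points) the three roots of $P(\cdot,w)$ are uniformly separated, so $m_{\rm c}$ is analytic and bounded there, and the estimates are read off from $m_{\rm c}(w,z)=\int_{\mathbb R}\rho_{\rm c}(x,z)(x-w)^{-1}\rd x$ together with Proposition \ref{prorhoc}. If $E\ge\lambda_+$ (Case 1) or $E\le\lambda_-$ (Case 2) at distance $\gtrsim\tau$ from the nearest edge, then $\re m_{\rm c}(E+\ii\eta)=\int\rho_{\rm c}(x,z)(x-E)[(x-E)^2+\eta^2]^{-1}\rd x$ has a definite sign — negative in Case 1, positive in Case 2 — while $|m_{\rm c}|\sim1$ because $\rho_{\rm c}$ has unit mass, compact support of length $\gtrsim1$ on which $\rho_{\rm c}\sim1$, and lies within $\OO(1)$ of $E$, and $\im m_{\rm c}=\eta\int\rho_{\rm c}[(x-E)^2+\eta^2]^{-1}\rd x\sim\eta$; the refinement $-1/2\le\re m_{\rm c}<0$ in Case 1 follows from monotonicity of $E\mapsto m_{\rm c}(E)$ on $(\lambda_+,\infty)$ and the edge value $m_{\rm c}(\lambda_+,z)=-\tfrac{2}{3+\alpha}\ge-\tfrac13$. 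If $\lambda_-\le E\le\lambda_+$ at distance $\gtrsim\tau$ from both edges (Case 4), Proposition \ref{prorhoc}(iii) gives $\im m_{\rm c}(E+\ii0)=\pi\rho_{\rm c}(E)\sim1$, hence $\im m_{\rm c}\sim1$ and $|m_{\rm c}|\ge\im m_{\rm c}\sim1$ for small $\eta$, with the matching upper bound from boundedness.

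The main obstacle is not any single estimate — each reduces to elementary bounds on explicit rational expressions in $\alpha$ — but the bookkeeping needed to make everything \emph{uniform} over $\tau\le|z|-1\le\tau^{-1}$ and over $w$, and to match the four regimes along their shared boundaries: one must check $c_\pm\sim1$ uniformly, keep track of which of the three roots of $P$ is $m_{\rm c}$ across transitions, and — the most delicate point — fix the branch of $(w-\lambda_\pm)^{1/2}$ so that $\im m_{\rm c}>0$ holds throughout and the sign and size statements for $\re m_{\rm c}$ come out correctly. Lemma \ref{pmcc1} (the regime $|z|\le1-\tau$) is proved the same way; its only genuinely new ingredient is a separate branch-point analysis at $w=0\in(\lambda_-,\lambda_+)$, which produces the $\ii(1-|z|^2)/\sqrt w$ singularity of $m_{\rm c}$ and the bound $\im m_{\rm c}\sim|w|^{-1/2}$ there.
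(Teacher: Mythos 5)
Your proposal is correct and takes essentially the same route as the paper: the paper's proof of Lemma~\ref{pmcc12} simply says ``similar to Lemma~\ref{pmcc1},'' whose proof likewise reads off Cases~1, 2, 4 from the Stieltjes-transform representation of $m_{\rm c}$ and handles the edge case by expanding the defining cubic around the double root at $\lambda_\pm$. Your Newton--Puiseux bookkeeping of the coefficients $c_\pm^2=-2\,\partial_wP/\partial_m^2P$ just makes explicit the expansion the paper invokes.
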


Here Case 1 covers the regime $E \ge \lambda_+$ and $w$ is far away from $\lambda_+$. Case 2 concerns the regime
$E \le \lambda_-$ and $w$ is far away from $\lambda_-$. Case 3 is for
 $w$  near  $\lambda_\pm$. Finally Case 4 is for $w$ not covered by the first three cases.

The following lemma concerns the two cases  covered in Lemmas \ref{pmcc1} and \ref{pmcc12}, { i.e.,  $z$ is either strictly inside or outside of the unit disk.}

\begin{lemma}\label{pmcc12.5}   There exists $ \tau_0 >0$ such that for  any $\tau \le \tau_0$   if either the conditions
$|z| \leq 1-\tau$   and $|w|\leq \tau^{-1}$ hold     or   the conditions  $|z| \geq 1+\tau$, $|w|\leq \tau^{-1}$,
$\re\omega\geq \lambda_-/5$ hold,
 then  we have
the following three bounds  concerning  $m_{\rm c}$ (all constants in the following estimates  depend on $\tau$):
\be\label{dA2}
|m_{\rm c}+1|\sim |m_{\rm c}|\sim |w|^{-1/2},
\ee
  \be\label{26ssa}
 \left |  \im \frac{1}{ w(1+m_{\rm c})}  \right |  \leq C  \im  m_{\rm c},   \ee
\be\label{ny27}
 \left|(-1 + |z^2|)
   \left( m_{\rm c}-\frac{-2}{3+\al}\right)  \left( m_{\rm c}-\frac{-2}{3-\al}\right)\right|\geq C\frac{\sqrt{\kappa+\eta}}{ |w|}.
 \ee
\end{lemma}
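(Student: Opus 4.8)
The plan is to prove the three estimates in Lemma \ref{pmcc12.5} by leveraging the explicit self-consistent equation \eqref{defmc1} together with the asymptotic expansions already established in Lemmas \ref{pmcc1} and \ref{pmcc12}. The key observation is that \eqref{defmc1} can be rewritten as a cubic (or rather as a factored quadratic after clearing denominators): multiplying through by $m_{\rm c}(1+m_{\rm c})$ and rearranging yields
\be\label{plan:cubic}
w(1+m_{\rm c})^2 m_{\rm c} - |z|^2 m_{\rm c} + (1+m_{\rm c}) = 0,
\ee
and the two "spectral edge" parameters $\lambda_\pm$ are precisely the images under $w = -(1+m)/m + |z|^2/(1+m)\big|_{m = m_\pm}$ of the critical points $m_\pm = -2/(3\pm\al)$ of that map. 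I would first record this algebraic structure carefully, since \eqref{ny27} is manifestly a statement about how far $m_{\rm c}$ stays from the two critical values $-2/(3\pm\al)$.

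For \eqref{dA2}: in the regime $|z|\le 1-\tau$ this is exactly \eqref{A20a} combined with \eqref{A20} (which give $m_{\rm c}\sim|w|^{-1/2}$ and $|m_{\rm c}+1|\sim|w|^{-1/2}$ since the leading term of $m_{\rm c}$ is the purely imaginary $\ii(1-|z|^2)/\sqrt w$ which dominates the $O(1)$ correction when $|w|$ is bounded — and when $|w|$ is not small, one falls into Case 4 of Lemma \ref{pmcc1} where $|m_{\rm c}|\sim 1\sim|w|^{-1/2}$). In the regime $|z|\ge 1+\tau$ with $\re w\ge \lambda_-/5>0$, note that $|w|$ is bounded below by $\lambda_-/5\sim 1$, so $|w|^{-1/2}\sim 1$, and the bound $|m_{\rm c}|\sim 1$, $|m_{\rm c}+1|\sim 1$ follows from Cases 1, 2, 4 of Lemma \ref{pmcc12} (away from the edges) and from Case 3's expansion near $\lambda_\pm$ (where $m_{\rm c}$ is close to $-2/(3\pm\al)$, both of which are order-one and bounded away from $0$ and $-1$ for $|z|\ge 1+\tau$). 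So \eqref{dA2} is essentially bookkeeping across the case division.

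For \eqref{26ssa}: I would start from \eqref{defmc1} in the form $\frac{1}{w(1+m_{\rm c})} = \frac{-1}{m_{\rm c}^{-1} - |z|^2(1+m_{\rm c})^{-1}}\cdot\frac{1}{1+m_{\rm c}}$, or more efficiently take the imaginary part of both sides of \eqref{defmc1} directly: writing $m_{\rm c}=\re m_{\rm c}+\ii\im m_{\rm c}$ and separating, one gets a linear relation expressing $\im\!\big[\tfrac{1}{w(1+m_{\rm c})}\big]$ in terms of $\im m_{\rm c}$ with a coefficient controlled by $|m_{\rm c}|$, $|1+m_{\rm c}|$ and the (lower bounds on the) real parts; then \eqref{dA2} plus the case-by-case sign/size information on $\re m_{\rm c}$ from Lemmas \ref{pmcc1}–\ref{pmcc12} gives the claimed bound $\le C\im m_{\rm c}$. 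The cleanest route is probably: from \eqref{defmc1}, $w(1+m_{\rm c}) = -m_{\rm c}^{-1} + |z|^2(1+m_{\rm c})^{-1}\cdot\frac{m_{\rm c}}{1+m_{\rm c}}\cdots$ — actually better, observe $\im\frac{1}{w(1+m_{\rm c})}$ can be related to $\im m_{\rm c}$ by differentiating the self-consistent equation, but the direct substitution is more robust. The main work is checking it doesn't degenerate near $\lambda_+$ where $\im m_{\rm c}\sim\sqrt\eta$ can be tiny.

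For \eqref{ny27}: this is the heart of the lemma and, I expect, the main obstacle. The quantity $(-1+|z|^2)(m_{\rm c}-m_+)(m_{\rm c}-m_-)$ with $m_\pm = -2/(3\pm\al)$ is — up to an explicit nonvanishing algebraic prefactor involving $\al$ and $w$ — exactly the square root of the discriminant governing the solution of the cubic \eqref{plan:cubic}, hence comparable to $|w-\lambda_+|^{1/2}$ near $\lambda_+$ (resp. $|w-\lambda_-|^{1/2}$ near $\lambda_-$, in the $|z|>1$ case), which is $\sim\sqrt{\kappa+\eta}/|w|^{1/2}$ after accounting for the chain rule factor $|w| = |dw/dm|$ ... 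The strategy is: (a) near an edge, insert the square-root expansion \eqref{A18} (resp. the Case 3 expansion of Lemma \ref{pmcc12}) for $m_{\rm c}-m_+$, giving $m_{\rm c}-m_+\sim (w-\lambda_+)^{1/2}$, and bound $m_{\rm c}-m_-$ below by an order-one constant since the two critical values are separated; multiply and compare with $\sqrt{\kappa+\eta}/|w|$ using $|w-\lambda_+|\sim\kappa+\eta$ and $|w|\sim 1$ (or $|w|\sim|w|$ tracking the small-$w$ case for $|z|<1$, where near $w=0$ one uses \eqref{A20}); (b) away from both edges, $m_{\rm c}$ is order one and bounded away from both $m_\pm$ (this is exactly the content of Case 4 and the "far" subcases of Cases 1–2), while $\sqrt{\kappa+\eta}/|w|$ is $O(1)$, so the inequality is trivial after fixing $C$ small; (c) the small-$w$ regime for $|z|<1$ needs separate care because there $m_{\rm c}\to\infty$ like $|w|^{-1/2}$, so the product $(m_{\rm c}-m_+)(m_{\rm c}-m_-)\sim m_{\rm c}^2\sim |w|^{-1}$, and $(-1+|z|^2)\sim 1$, giving LHS $\sim |w|^{-1}$ while RHS $\sim\sqrt{\kappa+\eta}/|w|\sim|w|^{-1/2}$ (since $\kappa\sim\lambda_+\sim 1$ and $\eta$ is small), so LHS $\gg$ RHS. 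The delicate point throughout is tracking the explicit $\al$-dependent constants in the square-root coefficients — one must verify $\sqrt{8(1+\al)^3/(\al(3+\al)^5)}$ and its $-$ counterpart are bounded above and below uniformly over the relevant range of $|z|$ (which they are, since $\al\in[1,\sqrt{1+8\tau^{-2}}]$ stays in a compact set bounded away from the degenerate values) — and making sure the $O(\lambda_\pm-w)$ error in \eqref{A18} is genuinely lower order compared to the $(w-\lambda_+)^{1/2}$ main term throughout the window $|w-\lambda_+|\le\tau$, which forces one to either shrink $\tau_0$ or to handle the transition region $|w-\lambda_+|\asymp\tau$ by matching with the "away from edge" estimate.
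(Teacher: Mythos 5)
Your plan is correct and, for \eqref{dA2} and \eqref{ny27}, follows essentially the same route as the paper: \eqref{dA2} is bookkeeping over the cases of Lemmas \ref{pmcc1} and \ref{pmcc12}, and \eqref{ny27} is proved by splitting into the edge regime (where \eqref{A18} gives $|m_{\rm c}+\tfrac{2}{3+\al}|\sim\sqrt{\kappa+\eta}$ and the companion factor stays of order one), the small-$|w|$ regime (where $|m_{\rm c}|^2\sim|w|^{-1}$ dominates), and the remaining regime. Where you genuinely differ is \eqref{26ssa}: the paper verifies it case by case, while your idea of taking imaginary parts of \eqref{defmc1} gives the exact identity $\im\frac{1}{w(1+m_{\rm c})}=-\frac{\im m_{\rm c}}{|w(1+m_{\rm c})|^2}\bigl(\frac{1}{|m_{\rm c}|^2}-\frac{|z|^2}{|1+m_{\rm c}|^2}\bigr)$, whose coefficient is bounded by a constant using only \eqref{dA2}; this is a uniform and arguably cleaner derivation, and your worry about degeneration near $\lambda_+$ is moot since the relation is exactly linear in $\im m_{\rm c}$. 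Two small repairs to your sketch of \eqref{ny27}: first, in the regime $E\ge\lambda_+$, $|w-\lambda_+|\ge\tau$, $\eta$ small, the separation $|m_{\rm c}+\tfrac{2}{3+\al}|\gtrsim 1$ is \emph{not} literally the content of the case statements of Lemma \ref{pmcc1} (Case 1 only gives $\re m_{\rm c}\in[-\tfrac12,0)$ and $\im m_{\rm c}\sim\eta$, so $\re m_{\rm c}$ could a priori sit near $-\tfrac{2}{3+\al}$); one needs the extra monotonicity argument from the Stieltjes representation \eqref{aa26}, namely $\re m_{\rm c}(E+\ii\eta)-m_{\rm c}(\lambda_+)\gtrsim \int\rho_{\rm c}(x)\frac{E-\lambda_+}{(x-E)(x-\lambda_+)}\,\rd x\gtrsim 1$ when $\eta$ is small (and $\im m_{\rm c}\sim 1$ otherwise), exactly as the paper does; the other factor is then handled by $\re m_{\rm c}\ge -\tfrac12$ together with $-\tfrac{2}{3-\al}\le -1$ for $|z|\le 1-\tau$. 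Second, in the small-$|w|$ regime your arithmetic slips: since $\kappa\sim\lambda_+\sim 1$ there, $\sqrt{\kappa+\eta}/|w|\sim|w|^{-1}$, not $|w|^{-1/2}$, so the two sides are merely comparable rather than the left side dominating — which is still sufficient for \eqref{ny27} since the constant $C$ may be taken small.
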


   \section{Proof of  Theorem \ref{lcl}, local circular law in the bulk}

Our main tool in this section will be Theorem \ref{sempl}, which critically uses the hypothesis $||z|-1|\geq\tau$: when
$z$ is on the unit circle the self-consistent equation (which is a fixed point equation for the function 
$g(m)=(1+w m(1+m)^2)/(|z|^2-1)$ see (\ref{110-31g}) later in this paper) becomes unstable

We follow Girko's idea \cite{Gir1984} of Hermitization, which can be reformulated as the following   identity  (see e.g. \cite{GuiKriZei2009}):
for any smooth $F$
 \be\label{id1}
\frac 1 N \sum_{j=1}^N F (\mu_j) =\frac1{4\pi N} \int \Delta F(z)   \sum_j   \log ( z-\mu_j )(\bar z - \bar \mu_j)   \rdA(z)
=\frac1{4\pi N} \int \Delta F(z)   \tr   \log Y^*_z   Y_z    \rdA(z)
\ee

{ We will use the notation $z= z(\xi)=z_0+N^{-a} \xi$}.
 Choosing $F= f_{z_0}$ defined in Theorem \ref{lcl} and changing the variable to $\xi$,  we can rewrite the identity (\ref{id1}) as
 $$
N^{-1} \sum_j f_{z_0}(\mu_j)
 = \frac1{4\pi} N^{-1+2a}\int (\Delta f)(\xi)   \tr   \log Y^*_z   Y_z    \rdA(\xi)
 = \frac1{4\pi} N^{-1+2a}\int (\Delta f)(\xi)  \sum_j \log \lambda_j(z)   \rdA(\xi).
$$
 Recall that  $\lambda_j(z)$'s are the ordered eigenvalues of $Y_z^*   Y_z $, and define $\gamma_j(z)$
as the classical  location  of $\lambda_j(z)$, i.e.
\be\label{gammadef}
\int_{0 }^ {\gamma_j(z) }\rho_{\rm c}(x,z)\rd x=j/N.
\ee
Suppose we have
\be\label{13}
 \left|\int \Delta f(\xi)  \left( \sum_j \log \lambda_j(z (\xi))-  \sum_j \log \gamma_j(z(\xi))\right)  \rdA(\xi) \right|\prec
 \| \Delta f \|_{L_1}.
  \ee
  Thanks to  Proposition \ref{prorhoc}, one can check  that  uniformly in $ |z| < 1-\tau$, and also in the domain $1+\tau\le |z|\le \tau^{-1}$ ($\tau>0$), for
any $\delta>0$   we have
$$
\left| \sum_j \log \gamma_j(z)-  N \left(\int_0^\infty(\log x)\rho_{\rm c}(x,z) \rd x\right)\right|\leq N^{\delta}
$$
for large enough $N$.
We therefore have
 \begin{align}\label{id22}
N^{-1} \sum_j f_{z_0}(\mu_j)
   = \frac1{4\pi}\int f(\xi)\left(\int_0^\infty(\log x)\Delta_z\rho_{\rm c}(x,z) \rd x\right) \rdA(\xi)
   +\OO_\prec   \| \Delta f \|_{L_1}
\end{align}
where we have used that
$$
\frac1{4\pi} N^{2a}\int \Delta f(\xi) \int_0^\infty(\log x)\rho_{\rm c}(x,z) \rd x    \rdA(\xi)
= \frac1{4\pi}\int f(\xi)\left(\int_0^\infty(\log x)\Delta_z\rho_{\rm c}(x,z) \rd x\right) \rdA(\xi).
$$
 It is known, by  Lemma 4.4 of \cite{Bai1997}, that
 \begin{align}\label{28j}
\int_0^\infty(\log x)\Delta_z\rho_{\rm c}(x,z) dx =4 \chi_D (z).
\end{align}
 Combining \eqref{id22} and \eqref{28j}, we have proved    \eqref{yjgq} provided that we can prove  \eqref{13}.
 To prove \eqref{13}, we need the following rigidity estimate which is a consequence of  Theorem \ref{sempl}.

\begin{lemma}\label{rg} Suppose  $\tau\le ||z|-1|\le \tau^{-1} $    for some  $\tau >0$ independent of $N$.
Then  for any $\zeta>0$, there exists $C_\zeta>0$ such that  the following event  holds  \hp{\zeta}:  for any $ \varphi^{C_\zeta}<j<N-\varphi^{C_\zeta}$ we have
    \be\label{213}
 \gamma _{j-\varphi^{C_\zeta}}\leq \lambda_j\leq \gamma_{j+\varphi^{C_\zeta}}.
  \ee
 and in  the case $|z|\le 1-\tau$,
  \be\label{zth}
\frac{| \lambda_{j }-\gamma_j|}{\gamma_j}\leq   \frac {C \varphi^{C_\zeta}}{ j (1- \frac j N)^{1/3} },
\ee
in  the case $|z|\ge 1+\tau$,
  \be\label{zth2}
\frac{| \lambda_{j }-\gamma_j|}{\gamma_j}\leq   \frac {C \varphi^{C_\zeta}}{ (\min\{\frac{j}{N}, 1-\frac{j}{N}\})^{1/3}N }.
\ee
    \end{lemma}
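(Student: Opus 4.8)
**Proof proposal for Lemma \ref{rg} (rigidity of the eigenvalues of $Y_z^* Y_z$).**

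The plan is to deduce the rigidity estimates \eqref{213}, \eqref{zth}, \eqref{zth2} from the strong local law of Theorem \ref{sempl} by the standard Helffer--Sj\"ostrand / counting-function argument, separately in the two regimes $|z|\le 1-\tau$ and $|z|\ge 1+\tau$. Fix $\zeta>0$ and work on the $\zeta$-high probability event of Theorem \ref{sempl}, on which $|m(w)-m_{\rm c}(w)|\le \varphi^{C_\zeta}/(N\eta)$ for all $w\in\b S(C_\zeta)$. Let $n(E)=N^{-1}\#\{j\st \lambda_j\le E\}$ be the empirical counting function and $n_{\rm c}(E)=\int_0^E\rho_{\rm c}(x,z)\,\rd x$ its deterministic counterpart, so that $n_{\rm c}(\gamma_j)=j/N$ by \eqref{gammadef}. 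First I would convert the Stieltjes transform bound into a bound on $|n(E)-n_{\rm c}(E)|$: integrating $\im m$ and $\im m_{\rm c}$ against a smoothed indicator of $(-\infty,E]$ at imaginary height $\eta$, one obtains, by the usual argument (see e.g. the proof of the rigidity in \cite{ErdYauYin2010Adv}),
\be
|n(E)-n_{\rm c}(E)| \;\le\; \frac{C\varphi^{C_\zeta}}{N}\qB{1 + \int_{\eta_0}^{1}\frac{\rd\eta}{N\eta^2}\cdot(\cdots)} \;\prec\; \frac{\varphi^{C_\zeta}}{N},
\ee
where $\eta_0=\eta_0(E)$ is the smallest allowed scale at energy $E$, namely $\eta_0\sim \varphi^{C_\zeta}N^{-1}|m_{\rm c}(E+\ii\eta_0)|^{-1}$ from the definition \eqref{eqn:Salpha} of $\b S(C_\zeta)$. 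The key point is that one must track the $|m_{\rm c}|$-dependence of $\eta_0$ carefully: by Proposition \ref{tnf} and Lemmas \ref{pmcc1}--\ref{pmcc12.5}, $|m_{\rm c}|\sim|w|^{-1/2}$ in the bulk for $|z|\le 1-\tau$, which is exactly what allows $\eta$ down to the anomalously small scale $\sim N^{-2}\varphi^{2\alpha}$ near the origin, whereas for $|z|\ge 1+\tau$ one has $|m_{\rm c}|\sim 1$ and the smallest scale is the usual $N^{-1}$.

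Given the counting-function estimate $|n(E)-n_{\rm c}(E)|\prec \varphi^{C_\zeta}/N$ uniformly for $E$ in the relevant range $[\max(\lambda_-/5,0),5\lambda_+]$, the order bound \eqref{213} is immediate: if $\lambda_j > \gamma_{j+\varphi^{C_\zeta}}$ then $n(\gamma_{j+\varphi^{C_\zeta}}) < j/N$, while $n_{\rm c}(\gamma_{j+\varphi^{C_\zeta}}) = (j+\varphi^{C_\zeta})/N$, contradicting $|n-n_{\rm c}|\prec\varphi^{C_\zeta}/N$ once $C_\zeta$ is enlarged; the lower inequality is symmetric. For the quantitative bounds \eqref{zth} and \eqref{zth2} on $|\lambda_j-\gamma_j|/\gamma_j$, I would translate the bound $|n-n_{\rm c}|\prec\varphi^{C_\zeta}/N$ into a displacement bound using the lower bound on the density $\rho_{\rm c}$ near $\gamma_j$. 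From Proposition \ref{prorhoc}: near the upper edge $\rho_{\rm c}(x,z)\sim\sqrt{\lambda_+-x}$, which gives $\gamma_{N}-\gamma_j\sim((N-j)/N)^{2/3}$ and hence $\rho_{\rm c}(\gamma_j)\sim((N-j)/N)^{1/3}$; in the bulk $\rho_{\rm c}\sim 1$; and near the lower end the two cases differ: for $|z|\le 1-\tau$ one has $\lambda_-<0$ and $\rho_{\rm c}(x,z)\sim 1/\sqrt{x}$ down to $x=0$, so $\gamma_j\sim (j/N)^2$ and $\rho_{\rm c}(\gamma_j)\sim N/j \cdot \gamma_j/\gamma_j$... more precisely $\int_0^{\gamma_j}\rho_{\rm c}\sim\sqrt{\gamma_j}=j/N$, giving $\gamma_j\sim (j/N)^2$ and $\rho_{\rm c}(\gamma_j)\sim 1/\sqrt{\gamma_j}\sim N/j$; while for $|z|\ge 1+\tau$ one has $\lambda_->0$ with a square-root edge, $\rho_{\rm c}(x,z)\sim\sqrt{x-\lambda_-}$, giving $\rho_{\rm c}(\gamma_j)\sim (j/N)^{1/3}$ for small $j$. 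Combining $|\lambda_j-\gamma_j|\cdot\inf_{[\gamma_j\wedge\lambda_j,\gamma_j\vee\lambda_j]}\rho_{\rm c}\le |n(\lambda_j)-n_{\rm c}(\gamma_j)| + |n(\lambda_j)-n_{\rm c}(\lambda_j)|\prec\varphi^{C_\zeta}/N$ (here one uses \eqref{213} to ensure the interval between $\lambda_j$ and $\gamma_j$ does not reach a region where $\rho_{\rm c}$ degenerates beyond the claimed order) and dividing by $\gamma_j$ yields exactly the stated bounds \eqref{zth} and \eqref{zth2}.

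The main obstacle I anticipate is the region near the spectral edge at $0$ in the interior case $|z|\le 1-\tau$, where $\rho_{\rm c}$ has an inverse-square-root \emph{blow-up} rather than a vanishing: here the natural spectral scale $\eta_0$ is $\sim N^{-2}$ rather than $N^{-1}$, and one must verify that Theorem \ref{sempl} genuinely controls $m$ at this tiny scale and that the Helffer--Sj\"ostrand contour integral — whose error terms involve $\int \rd\eta/(N\eta^2)$ — still closes. This forces careful bookkeeping of the $|m_{\rm c}|^{-1}$ factor in the definition of $\b S(\alpha)$ and the use of estimate \eqref{26ssa} and \eqref{A20a} from Lemma \ref{pmcc12.5} to control the stability of the self-consistent equation down to $\eta\sim N^{-2}\varphi^{2C_\zeta}$. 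A secondary technical point is the uniformity in $z$: since $z=z(\xi)$ ranges over a small disk around $z_0$ and ultimately one needs \eqref{213}--\eqref{zth2} simultaneously for all such $z$, I would fix a $\varphi^{C}$-net in $z$, apply the above at each net point, and use the Lipschitz continuity of $\lambda_j(z)$, $\gamma_j(z)$, $m_{\rm c}(w,z)$ in $z$ (with at most polynomial-in-$N$ constants) to extend to all $z$, absorbing the loss into $\varphi^{C_\zeta}$.
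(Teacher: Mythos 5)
Your proposal is correct and follows essentially the same route as the paper: both deduce the rigidity from the strong local law of Theorem~\ref{sempl} via the Helffer--Sj\"ostrand functional calculus to control the counting function, and then convert the counting bound into the spacing bounds \eqref{zth}, \eqref{zth2} using the explicit edge and bulk behavior of $\rho_{\rm c}$ from Proposition~\ref{prorhoc}. Your phrasing of the quantitative step via a lower bound on $\rho_{\rm c}(\gamma_j)$ is equivalent to the paper's use of the classical spacing $\gamma_j-\gamma_{j-1}\sim 1/(N\rho_{\rm c}(\gamma_j))$ in \eqref{214}, and you correctly flag the two genuine technical points the paper has to address: that the crude $\int \rd\eta/(N\eta^2)$ error must be tamed (the paper does this by integration by parts in $E$ and $\eta$ together with the Cauchy--Riemann identity for $\re\Delta m$), and that near $E\sim 0$ in the case $|z|\le 1-\tau$ the spectral scale is $\eta_0\sim N^{-2}\varphi^{2C_\zeta}$ because $|m_{\rm c}|\sim|w|^{-1/2}$, which is precisely built into the definition \eqref{eqn:Salpha} of $\b{\rm S}(\alpha)$ and is what Theorem~\ref{sempl} covers.
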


\begin{proof} First,  with \eqref{res:sempl} and the definition (\ref{eqn:Salpha}), for any $\zeta$ there exists $C_\zeta>0$ such that
\be
\max_{E+i\eta\in {\b {\rm S}}(C_\zeta)}  { \eta |   m(E+\ii\eta)-  m_{\rm c}(E+\ii\eta)|\le  C\varphi^{2C_\zeta}N^{-1}}.
\label{ym}
\ee
holds with \hp{\zeta}.  It also implies that for $\eta=\varphi^{C_\zeta}N^{-1}|m_{\rm c} |^{-1}$,
 \be\label{ym1}
\eta  \im  m(E+\ii\eta)
\le  C\varphi^{2C_\zeta}N^{-1}.\quad \ee
Then using  the fact that
$
\eta  \im  m(E+\ii\eta)
$ and $
\eta\im m_{\rm c}(E+\ii\eta)
$ are increasing with $\eta$, we obtain that \eqref{ym1} holds for   any $0\leq \eta\leq \OO( \varphi^{C_\zeta}N^{-1}|m_{\rm c} |^{-1})$
\hp{\zeta}. Notice that $\im m$ and $\im m_{\rm c}$ are positive number. Define the interval
$$I_E=[E_1,E_2]=[\gamma_j, 4\lambda_+]$$ and define $\eta_j\geq 0$ as the smallest positive solution of
$$
\eta_{j} =2\varphi^{C_\zeta}|m_{\rm c}(E_j+ \ii \eta_j)|^{-1}N^{-1},\quad j=1, \;2.
$$
Since $$ \#\{j: E-\eta\leq \lambda_j\leq E+\eta\}\leq CN\eta\im m(E+\ii\eta ),$$
we have by  \eqref{ym1} that
\be\label{dbu}
 \#\{j: E_1-\eta_1\leq \lambda_j\leq E_1+\eta_1\}
 +
 \#\{j: E_2-\eta_2\leq \lambda_j\leq E_2+\eta_2\}\leq C\varphi^{2C_\zeta}.
\ee

Using the Helffer-Sj\"ostrand functional calculus (see e.g. \cite{Dav1995}),  letting $\chi(\eta)$ be a smooth cutoff function  with  support in $[-1,1]$,  with $\chi(\eta)= 1$ for $ |\eta|\leq1/2$  and with bouded derivatives, we have  for any $q: \R\to \R$,
$$
q(\lambda)=\frac{1}{2\pi}\int_{\R^2}\frac{\ii y q''(x)\chi(y)+\ii (q(x)+\ii y q'(x))\chi'(y)}{\lambda-x-\ii y}\rd x\rd y.
$$
To prove \eqref{213}, we  choose $q$ to be  supported in  $[E_1 , E_2 ]$ such that $q(x)=1$ if $x\in [E_1+\eta_1, E_2-\eta_2]$ and $|q'|\leq C(\eta_{i})^ {-1}$, $|q''|\leq  C(\eta_{i})^{-2}$ if $|x-E_i|\leq \eta_i$.
We now claim that
\be\label{ys0}
\left|\int q(\lambda)\Delta\rho(\lambda)\rd\lambda \right|\leq C\varphi^{2C_\zeta}N^{-1},\   {\rm where}\ \Delta\rho=\rho-\rho_{\rm c},\ \rho=\frac{1}{N}\sum_j \delta_{\lambda_j(z)}.
\ee
Combining \eqref{ys0} and \eqref{dbu}, we have for any $1\leq j\leq N$,
$$
\#\{k: \lambda_k\ge \gamma_j\}-(N-j)=\OO(\varphi^{2C_\zeta})
$$
 which implies \eqref{213} with $C_\zeta$ in \eqref{213}  replaced by $2 C_\zeta$.

It   remains to prove \eqref{ys0}.
 Since $q$ and $\chi$ are real,  with $\Delta m=m-m_{\rm c}$
\begin{align}
\left|\int q(\lambda)\Delta\rho(\lambda)\rd\lambda \right|
\leq & C\int_{\R^2} \big( |q(E)| +|\eta| |q'(E)|\big) |\chi'(\eta)|
| \Delta m(E+\ii\eta)| \rd E\rd \eta\nonumber \\
& +C\sum_i\left|\int_{|\eta|\leq \eta_i}\int_{|E-E_i|\leq \eta_i} \eta q''(E) \chi(\eta)
\im \Delta m(E+\ii\eta)\rd E\rd \eta\right|\nonumber\\
&+
C\sum_i\left|\int_{|\eta|\geq \eta_i}\int_{|E-E_i|\leq \eta_i} \eta q''(E)\chi(\eta) \im\Delta m(E+\ii\eta)\rd E
\rd \eta\right|,\label{intr2fe1}
\end{align}
The first term is estimated by
\be\label{ys1}
 \int_{\R^2} ( |q(E)| +|\eta| |q'(E)|) |\chi'(\eta)|
| \Delta m(E+\ii\eta)| \rd E\rd \eta
 \le CN^{-1}\varphi^{C_\zeta},
\ee
 using \eqref{res:sempl} and that on the support of $\chi'$ is in $1\ge |\eta|\ge 1/2$.

For the second term in the r.h.s. of \eqref{intr2fe1}, with  $|q''|\leq C\eta_i^{-2}$,
\eqref{ym} and \eqref{ym1}, we obtain
\be
  \mbox{second term in r.h.s. of \eqref{intr2fe1}}
\le  CN^{-1}\varphi^{C_\zeta}. \label{ys2}
\ee
{ We now integrate the third term in
 \eqref{intr2fe1} by parts first in $E$, then in $\eta$
(and use the Cauchy-Riemann equation $\frac{\partial}{\partial E}\im (\Delta m)=-\frac{\partial}{\partial \eta}
\re(\Delta m)$) so that
 \begin{align*}
\int \eta q''(E) \chi(\eta)
\im (\Delta m(E+\ii\eta))\rd E\rd \eta=
 &-\int_{|E-E_i|\leq \eta_i} \eta_i \chi(\eta)
  q' (E)\re (\Delta m(E+\ii\eta))   \rd E
 \\
&- \int (\eta\chi'(\eta)+\chi(\eta))  q'(E)\re (\Delta m(E+\ii \eta))\rd E\rd \eta
\end{align*}

 We therefore can bound the third term in
 \eqref{intr2fe1} with absolute value by}
\begin{align}\label{temp7.501}
&C\sum_i\int_{ |E-E_i|\leq \eta_i}\eta_i |q'(E)| |\re{ \Delta} m(E+\ii\eta_i)|\rd E
\\\nonumber
+&
C\sum_i \eta_i^{-1}\int_{\eta_i\le \eta\leq 1}\int_{ |E-E_i|\leq \eta_i} |\re { \Delta} m(E+\ii\eta)|\rd E\rd \eta
+\int_{\R^2}   |\eta| |q'(E)|  |\chi'(\eta)|
| \Delta m(E+\ii\eta)| \rd E\rd \eta
\end{align}
where the last term can be bounded as the first term in r.h.s. of  \eqref{intr2fe1}. By using \eqref{ym}
  we have
\begin{align}\nonumber
\eqref{temp7.501}\leq& CN^{-1}\varphi^{C_\zeta}+
CN^{-1}\varphi^{C_\zeta}\sum_i\eta_i^{-1} \int_{|E-E_i|\leq \eta_i}\rd E
 \int_{\eta_i\le \eta\le 1}\frac{1}{\eta N}\rd \eta
\leq  CN^{-1}\varphi^{C_\zeta+1} \end{align}
where we used $\eta_i\ge N^{-C}$. Together with \eqref{ys1} and \eqref{ys2}, we obtain \eqref{ys0} and complete the proof of \eqref{213}.

Now we prove \eqref{zth}. Using  \eqref{gammadef} and Proposition \ref{prorhoc}, we have
\be\label{5188}
 \gamma_j=\OO( j^2N^{-2}), \quad j \le N/2; \qquad  \gamma_j=\lambda_+ -\OO\left ( \frac { N-j} { N} \right )^{2/3}, \quad j \ge N/2.
 \ee
One can check easily that
 $$
 \gamma_j- \gamma_{j-1}=\OO \left ( \frac j {N^{5/3}(N-j)^{1/3}} \right )
 $$
 and    for $j\geq 2$
  \be\label{214}
\frac{| \gamma_j-\gamma_{j\pm1}|}{\gamma_j}\leq C j^{-1}N^{1/3}(N-j)^{-1/3}\leq   \frac {C \varphi^{C_\zeta} }{ j (1-\frac{j}{N})^{1/3} }.
\ee
 Combining \eqref{214} with  \eqref{213}, we obtain \eqref{zth}.

 For \eqref{zth2}, the proof is similar to the above reasoning,
but simpler: in this case $\gamma_j\sim 1$ for $j\leq N/2$. For $j\geq N/2$, $\gamma_j$ is bounded as \eqref{5188}, and
one can check if  $1+\tau\leq |z|\le \tau^{-1}$, Proposition \ref{prorhoc},  we have
$$
 \gamma_j- \gamma_{j-1}=\OO \left (  \left(\min\left\{\frac{j}{N}, 1-\frac{j}{N}\right\}\right)^{-1/3}N^{-1}  \right )
 $$
which implies \eqref{zth2}.
\end{proof}

We return to  the proof of the local circular law,  Theorem  \ref{lcl}.
We now only need to prove  \eqref{13} from Lemma \ref{rg}.
From \eqref{zth} and \eqref{zth2}, we have
$$
 \left| \log \lambda_j(z)-\log \gamma_j(z)\right|\leq C \frac{| \lambda_{j }-\gamma_j|}{\gamma_j}\leq \frac {C \varphi^{C_\zeta}}{ j (1- \frac j N)^{1/3} }, \quad |z|\leq 1-\tau
$$
and
$$
 \left| \log \lambda_j(z)-\log \gamma_j(z)\right|\leq C \frac{| \lambda_{j }-\gamma_j|}{\gamma_j}\leq \frac {C \varphi^{C_\zeta}}{ (\min\{\frac{j}{N}, 1-\frac{j}{N}\})^{1/3}N }, \quad 1+\tau\le |z|\leq \tau^{-1}.
$$
Notice that, for large enough $C$, there is a constant $c>0$ such that  for any $j$ we have
$$
\lambda_j\leq N^C
$$
with probability larger than $1-\exp({{ -N^c}})$
{ (for this elementary fact, one can for example see that the
entries of $X$ are smaller that $1$ with probability greater than $1-\vartheta^{-1}e^{-N^\vartheta}$ by the subexponential
decay assumption (\ref{subexp})
and then use $\sum \lambda_j=\tr Y^* Y $)},
so together with the above bounds on $\left| \log \lambda_j(z)-\log \gamma_j(z)\right|$ this proves that
for any $\zeta>0$, there exists $C_\zeta>0$ such that
\be\label{ydy}
\left|    \sum_{j> \varphi^{C_\zeta}}
 \left( \log \lambda_j(z)-\log \gamma_j(z)\right)\right|
 \leq  \varphi^{2C_\zeta}
\ee
 \hp{\zeta}.
 Furthermore, one can see that or estimates hold uniformly for $z$'s in this region.

 On the other hand,  the following important Lemma \ref{lem:syc0} holds, concerning the smallest eigenvalue. It implies that
$$
\sum_{j\leq \varphi^{C_\zeta}}  | \log \lambda_j(z) |  \prec 1
$$
holds uniformly for $z$
in any fixed compact set. It is easy to check that for any $\delta>0$, for large enough $N$,
$$
\sum_{j\leq \varphi^{C_\zeta}}  | \log \gamma_j(z) |  \leq N^{  \delta}.
$$
Hence we can extend the summation in \eqref{ydy} to all $j \ge 1$, which gives \eqref{13} and     completes the proof of Theorem  \ref{lcl}.

   \begin{lemma}  [Lower bound on the smallest eigenvalue] \label{lem:syc0}
Under the same assumptions of Theorem \ref{lcl},
$$
 | \log \lambda_1(z) |  \prec 1
$$
holds uniformly for $z$
in any fixed compact set.
   \end{lemma}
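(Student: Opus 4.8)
The plan is to bound $\lambda_1(z)$, the smallest singular value squared of $Y_z = X - zI$, away from $0$ from below on the scale $N^{-C}$ — which is all that is needed, since $\log\lambda_1(z) \prec 1$ only requires $\lambda_1(z) \geq N^{-O(1)}$ with high probability (the upper bound $\lambda_1(z) \leq N^C$ was already established above via $\sum_j\lambda_j = \tr Y_z^* Y_z$). So the real content is: for $z$ in a fixed compact set, $\lambda_1(z) \geq N^{-C}$ holds $\prec$-uniformly. This is exactly the type of smallest-singular-value estimate that appears in the circular-law literature, and I would follow the Rudelson--Vershynin / Tao--Vu approach. Concretely, writing $\sigma_{\min}(Y_z) = \sqrt{\lambda_1(z)}$, one wants $\P(\sigma_{\min}(X - zI) \leq N^{-C}) \leq N^{-D}$ for any $D$, uniformly in $z$ in a compact set and uniformly over $N$; then a union bound over a fine ($N^{-C'}$) net in $z$, together with the Lipschitz bound $|\sigma_{\min}(X - z_1 I) - \sigma_{\min}(X - z_2 I)| \leq |z_1 - z_2|$, upgrades the pointwise statement to the claimed uniformity in $z$.

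The key steps, in order. First I would reduce the estimate on $\sigma_{\min}$ to a distance problem: $\sigma_{\min}(A) \geq c\, N^{-1/2}\min_k \mathrm{dist}(A_k, H_k)$ where $A_k$ is the $k$-th column of $A = \sqrt{N}\,Y_z$ and $H_k$ is the span of the remaining columns (this is a standard geometric identity, using that $A^{-1}$ has rows whose norms are $1/\mathrm{dist}$). Second, for each fixed $k$, I would condition on the other columns, so $H_k$ is a fixed (random, but independent of $A_k$) subspace of $\C^N$ of dimension $N-1$; its orthogonal complement is spanned by a single unit vector $v = v(k)$, and $\mathrm{dist}(A_k, H_k) = |\langle v, A_k\rangle|$. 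The column $A_k$ has independent entries: $N-1$ of them are i.i.d. centered with variance $1$ and subexponential decay, and one of them is shifted by the deterministic quantity $-\sqrt{N} z$. Third, I would invoke a small-ball / anti-concentration estimate — the Rogozin--Kolmogorov inequality or the Paley--Zygmund-type bound used by Rudelson--Vershynin — to show $\P(|\langle v, A_k\rangle| \leq t \mid H_k) \lesssim t + (\text{error depending on the spread of }v)$, and here the subexponential decay hypothesis \eqref{subexp} gives the needed regularity of the entry distributions (they are genuinely spread out at scale $1$). Fourth, combining with a union bound over $k = 1,\dots,N$ gives $\P(\sigma_{\min}(Y_z) \leq t\, N^{-1}) \lesssim N t + (\text{small})$; taking $t = N^{-C}$ and tracking that the error terms are $N^{-D}$ yields the pointwise bound. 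Fifth and finally, the net argument over $z$ described above completes the proof.

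The main obstacle is the anti-concentration step in the presence of a generic unit vector $v$: a clean small-ball bound for $\langle v, A_k\rangle$ requires knowing $v$ is not too concentrated on few coordinates (a "spreadness" or no-sparse-vectors property of the subspace $H_k^\perp$), and handling that is precisely the delicate part of the Rudelson--Vershynin argument — one splits into the event that $v$ is "compressible" (treated by an $\epsilon$-net over sparse vectors plus a bound on $\|Y_z\|$, which follows from the subexponential tails) and the event that $v$ is "incompressible" (where every such $v$ spreads mass over $\Omega(N)$ coordinates, giving the Berry--Esseen-type small-ball bound). The deterministic shift $-\sqrt{N}z$ in one entry of $A_k$ is harmless — in fact for $|z|$ bounded away from special values it can only help — but one must check the anti-concentration estimates are insensitive to it; this is routine since those estimates bound the Lévy concentration function, which is translation-invariant. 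I expect the subexponential decay \eqref{subexp} to be used in exactly two places: to control $\|Y_z\| \leq N^{1/2+o(1)}$ with overwhelming probability (needed for the compressible case and to localize the net), and to ensure the individual entries have Lévy concentration function bounded by a constant at scale $1$ (needed for the incompressible case).
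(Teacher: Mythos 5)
Your proposal is correct and follows essentially the same route as the paper: both reduce $\sigma_{\min}(Y_z)$ to the minimum distance from a column to the span of the others, condition on the remaining columns so that the distance becomes $|\langle u_k(z), X(z)e_k\rangle|$ for a unit vector $u_k(z)$ independent of $X(z)e_k$, and then invoke a small-ball estimate. The paper mostly outsources the hard part by citing \cite{RudVer2008} and Theorem 2.1 of \cite{TaoVu2008} (which use exactly the compressible/incompressible machinery you describe) and only spells out the argument in the easy case where the entries have a bounded density, noting that a far weaker bound than $N^{-2}$ suffices; your sketch fills in the general anti-concentration step, including the correct observation that the deterministic shift $-\sqrt{N}z$ is harmless by translation-invariance of the L\'evy concentration function and that a net in $z$ plus $1$-Lipschitz continuity of $\sigma_{\min}(X-zI)$ upgrades to uniformity in $z$, which the paper attributes to the cited references without elaboration.
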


\begin{proof}
This lemma follows\footnote{Strictly speaking, this bound was proved for identically distributed entries,
but the proof extends to the case of distinct distributions, provided that, for example, a uniform subexponential decay holds.} from
\cite{RudVer2008} or  Theorem 2.1 of \cite{TaoVu2008}, which gives the required estimate uniformly in $z$. Note that
the  typical size of $\lambda_1$ is $N^{-2}$ \cite{RudVer2008}, and
we need a much weaker bound of type $\Prob(\lambda_1(z)\leq e^{-N^{-\e}})\leq N^{-C}$ for any $\e,C>0$.
This estimate is very  simple to prove if, for example, the entries of $X$ have a density bounded by $N^C$.
Then, from the variational characterization $\lambda_1(z)=\min_{|u|=1}\|X(z)u\|^2$, one easily gets
$$\lambda_1(z)^{1/2}\geq N^{-1/2}\min_{k\in\llbracket 1,N\rrbracket}\mbox{dist}(X(z)e_k,\mbox{span}\{X(z)e_\ell,\ell\neq k\})
=
N^{-1/2}\min_{k\in\llbracket 1,N\rrbracket}|\langle X(z)e_k, u_k(z)\rangle|,$$
where  $u_k(z)$ is a unit vector independent of $X(z)e_k$. By conditioning on $u_k(z)$,  the
result of this lemma is straightforward  since  the matrix entries have a density.
 \end{proof}

 \section{
 Weak local Green function estimate}\label{sec:PTs}

 In this section, we make a first step towards Theorem \ref{sempl}, with a weaker version of it, stated hereafter.

  \begin{theorem} [Weak  local Green function estimates]  \label{wempl}
Under the assumption of Theorem \ref{sempl}, the following event  hold  \hp{\zeta} (see (\ref{eqn:Salpha}) for the definition of $\b {\rm S}$):
 \be\label{res:wempl}
 \bigcap_{w \in{ \b {\rm S}}(b )} \hbb{\max_{ij}|G_{ij}(w)-m_{\rm c}(w)\delta_{ij}|  \leq \varphi^{C_\zeta}
 \frac {1}{|  w^{1/2}|}
\left(
\frac{|  w^{1/2}|}{N\eta}
\right)^{1/4} }, \quad b > 5 C_\zeta.   \ee
  \end{theorem}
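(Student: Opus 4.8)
The plan is to follow the self-consistent equation method for the symmetric matrix $Y_z^*Y_z$, as in \cite{ErdYauYin2010Adv} and \cite{PilYin2011}, but paying careful attention to the smallness of $|w|$ in the regime $|z|<1-\tau$, where $m_{\rm c}\sim|w|^{-1/2}$ blows up. First I would write down the Schur complement / self-consistent equations for the diagonal entries $G_{ii}$. Since $Y_z = X - zI$ has a deterministic shift, the minor expansion will produce, besides the usual martingale-type fluctuation terms $Z_i$, a structured deterministic contribution from the $z$-shift; one tracks this through the identity $m_{\rm c}^{-1} = -w(1+m_{\rm c}) + |z|^2(1+m_{\rm c})^{-1}$ from \eqref{defmc1}. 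The key quantities to control are the large-deviation error terms $\Lambda_o := \max_{i\neq j}|G_{ij}|$ and $\Lambda_d := \max_i |G_{ii} - m_{\rm c}|$, plus the averaged error $\Lambda := |m - m_{\rm c}|$, via a bootstrap: assuming a weak a priori bound on $\Lambda$, derive the fluctuation estimates by standard large-deviation bounds for quadratic forms (here the subexponential decay \eqref{subexp} enters, giving control at scale $\varphi^{C}$), then feed them back into a stability analysis of the self-consistent equation.

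The central analytic input is the stability of the map defined by \eqref{defmc1}. Writing the perturbed equation as $m^{-1} = -w(1+m) + |z|^2(1+m)^{-1} + \mathcal{E}$ with error $\mathcal{E}$, I would linearize around $m_{\rm c}$ and show the derivative of the defining functional is bounded away from a degenerate value precisely because $||z|-1|\geq\tau$; the relevant non-degeneracy is quantified by \eqref{ny27} of Lemma \ref{pmcc12.5}, namely $|(-1+|z|^2)(m_{\rm c} - \tfrac{-2}{3+\al})(m_{\rm c}-\tfrac{-2}{3-\al})| \gtrsim \sqrt{\kappa+\eta}/|w|$, together with \eqref{26ssa} and \eqref{dA2}. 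These give: from $|\mathcal{E}|\leq\delta$ small one concludes $|m - m_{\rm c}| \lesssim |w|\,|\mathcal{E}| / \sqrt{\kappa+\eta}$, or a quadratic-improvement version when a priori $|m-m_{\rm c}|$ is already small. The bound one wants, $\Lambda_o + \Lambda_d \lesssim \varphi^{C_\zeta} |w|^{-1/2}(|w|^{1/2}/N\eta)^{1/4}$, comes out of balancing the fluctuation size $\sqrt{\im m_{\rm c}/N\eta} \sim \sqrt{|w|^{-1/2}/N\eta}$ against the stability loss; the exponent $1/4$ (rather than the strong $1/2$ of Theorem \ref{sempl}) is exactly the price of the weak bootstrap that does not yet use the self-improving quadratic structure optimally.

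To make the bootstrap rigorous I would run a standard continuity argument in $\eta$: at the largest scale $\eta = 10$ all quantities are trivially controlled since $G$ is bounded, and then one decreases $\eta$ along a fine grid (of polynomial size in $N$), using that $G_{ij}(w)$ is Lipschitz in $w$ with constant $\eta^{-2}$ to pass from grid points to all $w\in\b S(b)$, and using that the gap between the a priori assumption and the conclusion is a factor $\varphi^{-c}$ at each step so the induction closes with $\zeta$-high probability. The main obstacle I anticipate is the region where $|w|$ is very small, of order $N^{-2}\varphi^{2\alpha}$, well below the naive scale $\eta\sim N^{-1}$: there $m_{\rm c}$ is large and the "effective" spectral parameter is small, so one must verify that the large-deviation estimates for the quadratic forms $Z_i$ and the stability bound \eqref{ny27} genuinely survive with the stated uniformity down to that scale — in particular that $\im m_{\rm c}\sim|w|^{-1/2}$ (from \eqref{A20a}) keeps the fluctuation/denominator ratio favorable. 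Handling that small-$|w|$ corner uniformly, rather than the bulk of the spectrum where everything is order one, is where the real work lies.
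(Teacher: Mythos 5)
Your strategy matches the paper's proof in its essentials: you derive a perturbed self-consistent equation for $m$ from the Schur-complement/minor expansion of $G_{ii}$, feed in the large-deviation bounds on the fluctuation terms $Z_i$, $\mathcal Z_i$, use the non-degeneracy \eqref{ny27} together with \eqref{dA2}, \eqref{26ssa} for stability, and run a grid-based continuity argument in $\eta$ down to the bottom of $\b{\rm S}(b)$. You also correctly identify both the source of the exponent $1/4$ (only a linear-in-$\Psi$ bound on the self-consistent-equation error at this stage, sharpened to quadratic only in the strong version) and the genuine difficulty of the small-$|w|$ corner where $|m_{\rm c}|\sim|w|^{-1/2}$ blows up.

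There is one step you underestimate. You claim that at $\eta=10$ ``all quantities are trivially controlled since $G$ is bounded,'' but mere boundedness $|G_{ij}|\leq\eta^{-1}$ does \emph{not} give you the required a priori smallness $\Lambda=|m-m_{\rm c}|\ll |m_{\rm c}|$ that the bootstrap needs as its starting point; it only bounds $m$ above. To initialize the continuity argument one must actually prove $\Lambda\lesssim N^{-1/2+\e}$ at $\eta\sim 1$, and in the paper this is Lemma~\ref{lem:selfcon1}: one first shows by contradiction that $\im m\gtrsim(\log N)^{-1}$ (else the self-consistent relation forces $|m|$ to be order one), then derives the approximate cubic equation $P_{w,z}(m)=\OO(N^{-1/2+\e})$, and finally uses that for $\eta\sim 1$ the cubic $P_{w,z}$ has three well-separated simple roots with exactly one having positive imaginary part. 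This is a real argument, not a triviality, and without it the induction has no base case. A smaller point: your displayed stability conclusion $|m-m_{\rm c}|\lesssim |w|\,|\mathcal E|/\sqrt{\kappa+\eta}$ has the wrong power of $|w|$; with your normalization of $\mathcal E$ (so that $\mathcal D(m)=-\mathcal E$ and $\Upsilon=\mathcal D(m)\,m(1+m)\sim \mathcal E\,|w|^{-1}$), the bound corresponding to \eqref{ag24} reads $|m-m_{\rm c}|\lesssim |w|^{-1}|\mathcal E|/\sqrt{\kappa+\eta}$, and this matters precisely in the small-$|w|$ regime you flag as delicate.
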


This theorem will be proved in the subsequent subsections.

 \subsection{Identities for Green functions and their minors. }
There are many different ways to form minors for the matrices  $Y^* Y$ and $ Y Y^* $. We will use the following
definition (where we use the notation $\llbracket a, b\rrbracket=[a,b]\cap\mathbb{Z}$).

\begin{definition} \label{definition of minor}
Let $\bb T, \bb U \subset \llbracket 1, N\rrbracket$. Then we define $Y^{(\bb T, \bb U)}$ as the $ (N-|\bb U|)\times (  N-|\bb T|)  $
matrix obtained by removing all columns of $Y$ indexed by $i \in \bb T$ and all rows  of $Y$ indexed by $i \in \bb U$. Notice that we keep the labels of
indices of $Y$ when defining $Y^{(\bb T, \bb U)}$.

Let $\by_i$ be the $i $-th column of $Y$ and  $\by^{(\bb S)}_i$ be the vector obtained by removing $\by_i (j) $ for
 all  $ j \in  \bb S$. Similarly we define $\mathrm y_i$ be the $i $-th row of $Y$.
Define
\begin{align*}
G^{(\bb T, \bb U)}=\Big [  (Y^{(\bb T, \bb U)})^* Y^{(\bb T, \bb U)}- w\Big]^{-1},\ \  & m_G^{(\bb T, \bb U)} =\frac{1}{N}\tr G^{(\bb T, \bb U)},
\\
\mG^{(\bb T, \bb U)}= \Big [ Y^{(\bb T, \bb U)}(Y^{(\bb T, \bb U)})^*- w \Big ]^{-1},\ \ & m_\mG^{(\bb T, \bb U)} =\frac1N\tr \mG^{(\bb T, \bb U)}.
\end{align*}
By definition,  $m^{(\emptyset, \emptyset)} = m$.
 Since the eigenvalues of $Y^* Y $ and $Y Y^*$ are the same except the zero eigenvalue, it is easy to check that
\be\label{35bd}
m_G^{(\bb T, \bb U)}(w) =m_\mG^{(\bb T, \bb U)} +\frac{|\bb U|-|\bb T|}{N  w}
\ee
For $|\bb U|=| \bb T|$, we define
\be\label{d trGmG}
m ^{(\bb T, \bb U)}:= m_G^{(\bb T, \bb U)} = m_\mG^{(\bb T, \bb U)}
\ee
\end{definition}

By definition, $G^{(\bb T, \bb U)} $  is a $(N-|\bb T|)\times (N-|\bb T|)$ matrix and $\mG^{(\bb T, \bb U)} $  is a $(N-|\bb U|)\times (N-|\bb U|)$ matrix.
For  $i$ or $j\in\bb T$, $G_{ij}^{(\bb T, \bb U)}$ has no meaning from the previous definition.  But we define  $G_{ij}^{(\bb T, \bb U)} = 0$  whenever
either $i$ or $j \in \bb T$.   Similar convention applies to $\mG_{i j}^{(\bb T, \bb U)}$, which is zero if $i$ or $j \in \bb U$.

 Notice that we can view  $Y_z Y^*_z = (W_{z^*})^* W_{z^*} $ where $ W_{z^*} = Y^*_z$, so all properties of  $G^{(\bb T, \bb U)}$ have parallel versions
for  $\mG^{(\bb U, \bb T)}$. We shall call this property   {\it row-column  reflection symmetry}, i.e., we interchange
 $G^{(\bb U,\bb T)}, Y, z, \by_i $ by   $\mG^{(\bb T, \bb U)}, Y^*, z^*, \mathrm y_i $. Here
$\by_i$ is a $N\times 1 $ column vector and $\mathrm y_i$ a $1\times N $ row vector.
The following lemma provides the formulas relating Green functions and their minors.

  \begin{lemma} [Relation between $G$, $G^{(\bb T,\emptyset)}$ and  $G^{( \emptyset, \bb T)}$]  \label{lem: GmG}
 For $i,j \neq k  $ ( $i = j$ is allowed) we have
\be\label{111}
 G_{ij}^{(k,\emptyset)}=G_{ij}-\frac{G_{ik}G_{kj}}{G_{kk}}
,\quad
\mG_{ij}^{(\emptyset,k)}=\mG_{ij}-\frac{\mG_{ik}\mG_{kj}}{\mG_{kk}},
\ee
\be\label{Gik}
 G^{ ( \emptyset,i)}
= G+\frac{(G  {\mathrm y} _i^*) \, ( {\mathrm y} _i  G)}
{1-  {\mathrm y} _i G    {\mathrm y} _i ^*}
,\quad
G
 =G^{ ( \emptyset,i)}-\frac{( G^{ ( \emptyset,i)} {\mathrm y} _i^*)  \,
 (  {\mathrm y} _i  G^{ ( \emptyset, i)})}
 {1+   {\mathrm y} _i  G^{ ( \emptyset,i)} {\mathrm y} _i ^* },
 \ee
and
$$
\mG^{ (i,\emptyset)}
=\mG+\frac{(\mG  \by_i) \, (\by_i^* \mG)}
{1-   \by_i^*   \mG     \by_i  }
,\quad
 \mG
 =\mG^{ (i,\emptyset)}-\frac{(\mG^{ (i,\emptyset)}  \by_i)  \,
 ( { \by_i}^ * \mG^{ (i,\emptyset)})}
 {1+  \by_i^*\mG^{ (i,\emptyset)}   \by_i  }.
$$

Furthermore, the following crude  bound on the difference between $m$ and $m_G^{(\mathbb{U}, \mathbb{T})}$ holds:
for $\bb U,  \bb T\subset \llbracket 1, N\rrbracket$    we   have
\be\label{37mk}
|m-m^{(\bb U, \bb T)}_G|+|m-m^{(\bb U, \bb T)}_\mG| \leq   \frac{|\bb U|+|\bb T|}{N\eta}.\quad
 \ee
   \end{lemma}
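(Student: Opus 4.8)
The final statement to prove is Lemma~\ref{lem: GmG}, which bundles together the Schur complement / resolvent identities relating $G$ to its minors together with the crude a priori bound \eqref{37mk}. The plan is to prove the algebraic identities by standard linear algebra and then to deduce \eqref{37mk} from interlacing of eigenvalues.

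\textbf{Step 1: the block-inverse identities \eqref{111}.} I would write $Y^*Y - w$ in block form by singling out the $k$-th row and column. If $M = (Y^*Y - w)$ and we partition indices into $\{k\}$ and its complement, then $G = M^{-1}$ and $G^{(k,\emptyset)}$ is (up to the zero convention) the inverse of the lower-right block $M_{(k)}$ obtained by deleting row and column $k$. The classical Schur complement formula for the inverse of a block matrix gives, for $i,j\neq k$, $(M^{-1})_{ij} = (M_{(k)}^{-1})_{ij} + (M^{-1})_{ik}(M^{-1})_{kk}^{-1}(M^{-1})_{kj}$, which is exactly $G_{ij} = G_{ij}^{(k,\emptyset)} + G_{ik}G_{kj}/G_{kk}$, i.e.\ the first identity in \eqref{111}. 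The second identity is the same statement applied to $YY^*-w$, i.e.\ to $\mG$; this is the row-column reflection symmetry already noted in the text (replace $Y,G$ by $Y^*,\mG$).

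\textbf{Step 2: the rank-one update identities \eqref{Gik} and its $\mG$-analogue.} Here I would compare $G = (Y^*Y - w)^{-1}$ with $G^{(\emptyset,i)} = ((Y^{(\emptyset,i)})^* Y^{(\emptyset,i)} - w)^{-1}$. Removing the $i$-th \emph{row} of $Y$ removes the rank-one contribution $\mathrm{y}_i^* \mathrm{y}_i$ from $Y^*Y$ (since $Y^*Y = \sum_{\ell} \mathrm{y}_\ell^* \mathrm{y}_\ell$ as a sum over rows). Hence $Y^*Y - w = (Y^{(\emptyset,i)})^*Y^{(\emptyset,i)} - w + \mathrm{y}_i^*\mathrm{y}_i$, and the Sherman--Morrison formula
$$
(A + \mathrm{y}_i^* \mathrm{y}_i)^{-1} = A^{-1} - \frac{A^{-1}\mathrm{y}_i^* \mathrm{y}_i A^{-1}}{1 + \mathrm{y}_i A^{-1}\mathrm{y}_i^*}
$$
applied with $A = (Y^{(\emptyset,i)})^*Y^{(\emptyset,i)} - w$ gives the second formula in \eqref{Gik}; solving it for $G^{(\emptyset,i)}$ in terms of $G$ (equivalently applying Sherman--Morrison in the other direction with a minus sign) gives the first formula. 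The $\mG$ versions are obtained by the row-column reflection symmetry: deleting the $i$-th \emph{column} of $Y$ removes the rank-one piece $\by_i \by_i^*$ from $YY^*$, and the same computation yields the displayed formulas for $\mG^{(i,\emptyset)}$ and $\mG$.

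\textbf{Step 3: the crude bound \eqref{37mk}.} I would proceed one index at a time and use eigenvalue interlacing. Removing one row or one column of $Y$ changes $Y^*Y$ (resp.\ $YY^*$) by a rank-one perturbation or by passing to a principal submatrix, so the eigenvalues $\lambda_j$ of $Y^*Y$ and $\lambda_j'$ of the minor interlace. For a Stieltjes transform $m(w) = \frac1N\sum_j (\lambda_j - w)^{-1}$ with $w = E + \ii\eta$, interlacing of the two eigenvalue sequences gives $|m(w) - m'(w)| \le \frac{1}{N}\sum_j |(\lambda_j - w)^{-1} - (\lambda_j' - w)^{-1}|$, and because the $\lambda_j'$ interleave the $\lambda_j$ this telescopes to at most $\frac1N \cdot \frac{1}{\eta}$ (each term $|(\lambda_j-w)^{-1}|\le \eta^{-1}$ and the interlacing makes the sum of differences a telescoping sum bounded by $2\eta^{-1}$, in fact $\le (N\eta)^{-1}$ after the $1/N$ normalization — I would just invoke the standard interlacing estimate $|m - m^{(1)}| \le (N\eta)^{-1}$). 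Iterating this $|\bb U| + |\bb T|$ times and using the triangle inequality yields $|m - m_G^{(\bb U,\bb T)}| \le (|\bb U|+|\bb T|)/(N\eta)$; the same argument applied to $YY^*$ gives the bound for $m_\mG^{(\bb U,\bb T)}$, and adding the two gives \eqref{37mk}. One subtlety is keeping track of the $|\bb U| - |\bb T|$ discrepancy between $m_G$ and $m_\mG$ from \eqref{35bd}, but since $|\bb U| - |\bb T|$ of those steps only change the number of zero eigenvalues, they contribute a term of size $\le |\,|\bb U|-|\bb T|\,|/(N|w|)$ which is also $\lesssim (|\bb U|+|\bb T|)/(N\eta)$, so the stated bound still holds.

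\textbf{Main obstacle.} None of these steps is deep; the bookkeeping in Step~3 — reconciling the two normalizations $m_G$ vs.\ $m_\mG$ while deleting rows and columns in arbitrary order and making sure the interlacing is applied to the correct matrix ($Y^*Y$ when deleting a column, $YY^*$ when deleting a row) — is the only place where care is needed, but it is entirely routine. The algebraic identities are just Schur complement and Sherman--Morrison; the only point to watch is the sign conventions and the zero-padding convention $G_{ij}^{(\bb T,\bb U)} = 0$ for $i$ or $j \in \bb T$, which makes the identities in \eqref{111} literally correct as stated for all $i,j\neq k$.
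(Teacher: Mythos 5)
Your Steps 1 and 2 match the paper exactly: the Schur-complement formula of Lemma~\ref{basicIG}\,(iii) yields \eqref{111}, and the decomposition $Y^*Y=\sum_\ell {\mathrm y}_\ell^*{\mathrm y}_\ell$ together with Sherman--Morrison yields \eqref{Gik}, with the $\mG$ versions following by the row--column reflection symmetry. The difference is in Step~3. The paper does not argue via eigenvalue interlacing: it combines \eqref{111} (summed over $j$) with the spectral identity
$\absb{[G^2]_{ii}}=\eta^{-1}\im G_{ii}$
to get the exact algebraic identity $m^{(i,\emptyset)}_G-m=-N^{-1}[G^2]_{ii}/G_{ii}$, and then $|m-m^{(i,\emptyset)}_G|\leq (N\eta)^{-1}\im G_{ii}/|G_{ii}|\leq (N\eta)^{-1}$. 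This gives the constant $1$ cleanly. Your interlacing route (principal submatrix when a column is removed, rank-one deflation when a row is removed) certainly gives the right order $(N\eta)^{-1}$, but the ``telescoping'' you gesture at does not literally work for complex $(\lambda_j-w)^{-1}$; the honest version of the interlacing argument bounds the difference of Stieltjes transforms via the counting-function / integration-by-parts estimate, which gives $\pi/(N\eta)$, or else one writes $N(m-m')=(\log p)'(w)$ with $p(w)=\det(\text{minor}-w)/\det(Y^*Y-w)=G_{ii}(w)$, which collapses back into the paper's identity. So either you lose a harmless constant (irrelevant since the bound is only ever used inside $\prec$), or the sharp interlacing argument is secretly the paper's. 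Your observation that the $|\bb U|-|\bb T|$ mismatch from \eqref{35bd} contributes at most $\frac{||\bb U|-|\bb T||}{N|w|}\leq \frac{|\bb U|+|\bb T|}{N\eta}$ is correct and handles the only real bookkeeping subtlety. In short: Steps 1--2 are the paper's proof; Step 3 is a legitimate alternative but should be stated as the explicit resolvent/log-derivative identity rather than a vague ``telescoping,'' both for rigor and to recover the stated constant.
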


 \begin{proof} By the   row-column  reflection symmetry, we only need to prove those formulas involving  $G$.  We first prove \eqref{111}.  In \cite{ErdYauYin2010PTRF}-\cite{ErdYauYin2010Adv}, was proved a lemma concerning  Green functions of matrices  and their  minors.
This lemma is stated as Lemma \ref{basicIG} in Appendix B.  Let
  \be\label{49H}
  H:= Y^* Y
  \ee
For $\bT\subset \llbracket 1, N\rrbracket$, denote  $H^{[\bT]}$ as the $N-|\T|$ by $N-|\T|$ minor of $H$ after removing the
 $i$-th   rows and columns index by $i\in \T$. Following the convention in Definition \ref{basicd}, we define
 \be\label{410G}
 G^{[\bT]}=(H^{[\bT]}-wI)^{-1}.
 \ee
 By definition, we have
 \be\label{mfn}
 G^{[\bT]}=G^{(\bT,\emptyset)}.
 \ee
 Then we  can apply   \eqref{3} to  $G^{(\bT,\emptyset)}$ and obtain \eqref{111}.

 We now prove \eqref{Gik}.   Recall the rank one perturbation formula
$$
(A +   \bv^* \bv)^{-1} = A^{-1} - \frac {  (A^{-1}  \bv^*)  (\bv A^{-1})} { 1 +   \bv A^{-1}   \bv^*}
$$
where $\bv$ is a row vector and $\bv^*$ is its Hermitian conjugate.
Together with
$$
G^{-1}=Y^* Y-w I= \sum_j {\mathrm y}_j^* {\mathrm y}_j-w I
=\left(G^{(\emptyset, i)}\right)^{-1}+{\mathrm y}_i^* {\mathrm y}_i
$$
we obtain \eqref{Gik}.

We now prove \eqref{37mk}.  With \eqref{111}, we have
$$
m_{G}^{(i,\emptyset)}-m=-\frac{1}{N}\frac{\sum_{j}G_{ji}G_{ij}}{G_{ii}}.
$$
Moreover, by diagonalization in an orthonormal basis and the obvious identity $|(\lambda-\omega)^{-2}|=\eta^{-1}\im[(\lambda-\omega)^{-1}]$ ($\lambda\in\mathbb{R}$), we have
$$
\left|\sum_{j}G_{ji}G_{ij}\right|=|[G^2]_{ii}|=\frac{\im G_{ii}}{\eta},
$$
so we have proved  that
\be\label{b1}
|m-m^{(i, \emptyset)}_G| \leq    \frac{1 }{N\eta}.
\ee
By \eqref{d trGmG},   \eqref{b1} holds for $m_\mG^{( i, \emptyset)}$ as well.  Similar arguments can be used to prove \eqref{37mk} for $m_
 \mG^{(i, j)}$, $ m_ G^{(i, j)}$ and the general cases. This completes the proof of Lemma \ref{lem: GmG}.
\end{proof}

The next step is to derive  equations between the matrix and its minors.  The main results are stated as
the following Lemma \ref{idm}. We first need the following definition.

 \begin{definition}\label{Zi-def}
 In the following, $\E_X$ means the integration with respect to the random variable $X$.
For any $\bb T\subset \llbracket 1, N\rrbracket$, we introduce the   notations
$$
Z^{(\bb T)}_{i }:=(1-\E_{{\mathrm y}_i})
 {\mathrm y}^{(\bb T)}_i  G^{(\bb T, i)} {\mathrm y}_i^{(\bb T)*}
$$
and
$$
\cal Z^{(\bb T)}_{i }:=(1-\E_{\by_i})
\by_i^{(\bb T) *} \mG^{(i, \bb T)} \by_i^{(\bb T)}.
$$
 Recall by our convention that
$\by_i$ is a $N\times 1 $ column vector and $\mathrm y_i$ is a $1\times N $ row vector.
For simplicity we will write
$$
Z _{i }
=Z^ {(\emptyset)}_{i}, \quad \cal Z _{i }
=\cal Z^ {(\emptyset)}_{i}.
$$
\end{definition}

 \begin{lemma}  [Identities for  $G$, $\mG$, $Z$ and  $\cal Z$]   \label{idm}
 For any $ \T\subset \llbracket 1, N\rrbracket$, we have
\begin{align}\label{110}
 G^{(\emptyset , \bb T)} _{ii}
 & =   - w^{-1}\left[1+  m_\mG^{(i, \bb T)}+   |z|^2 \mG_{ii}^{(i, \bb T)} +\cal Z^{(\bb T)}_{i } \right]^{-1},
\end{align}
\be\label{110b}
 {G_{ij} ^{(\emptyset , \bb T) } }
  =   -wG_{ii}^{(\emptyset , \bb T) } G^{(i,\bb T)}_{jj}
\left(   \by_i^{(\bb T)*}  \mG^{(ij, \bb  T)}  \by_j^{(\bb T)}\right) , \quad i\neq j,
\ee
where, by definition,  $\mG_{ii}^{(i,\bb T)}=0$ if $i\in \bb T$.  Similar results hold for $\mG$:
\be\label{110c}
\left[\mG^{(\bb T, \emptyset)} _{ii} \right]^{-1}
  =   - w\left[1+  m_ G^{(\bb  T,i)}+   |z|^2  G_{ii}^{(\bb T,i)} + Z^{(\bb T)}_{i } \right]
\ee
\be\label{110d}
 {\mG_{ij}^{(\bb T, \emptyset)}}
  =   -w\mG_{ii}^{(\bb T, \emptyset)}\mG^{(\bb T, i)}_{jj}\left( \mathrm y_i^{(\bb T)} G^{( \bb T,ij)}   \mathrm y_j^{(\bb T)*}\right), \quad i\neq j.
\ee
\end{lemma}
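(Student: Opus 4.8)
The plan is to derive these identities from the Schur complement formula together with the rank-one perturbation formulas already established in Lemma \ref{lem: GmG}, and to use the row-column reflection symmetry to cut the work in half. First I would focus on \eqref{110} with $\bb T = \emptyset$, i.e.\ $G_{ii} = -w^{-1}[1 + m_{\mG}^{(i,\emptyset)} + |z|^2 \mG_{ii}^{(i,\emptyset)} + \cal Z_i]^{-1}$. The starting point is the standard Schur/cofactor identity for the diagonal entry of a resolvent: writing $H = Y^* Y - w I$ and decomposing $H$ into the $i$-th row/column and the minor $H^{[i]}$, one gets $(H^{-1})_{ii} = \big( H_{ii} - \b a_i^* (H^{[i]})^{-1} \b a_i \big)^{-1}$ where $\b a_i$ is the $i$-th column of $H$ with the $i$-th entry removed. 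Here $H_{ii} = \|\by_i\|^2 - w = {\mathrm y}_i {\mathrm y}_i^* - w$ (using the convention that ${\mathrm y}_i$ is the $i$-th row of $Y$, so $\by_i = {\mathrm y}_i^*$ up to labelling), and the quadratic form $\b a_i^* (H^{[i]})^{-1} \b a_i$ must be re-expressed in terms of $\mG$. The key algebraic step is to recognize that $Y^{(\emptyset,i)} (Y^{(\emptyset,i)})^*$ relates the minor $G^{[i]} = G^{(i,\emptyset)}$ to $\mG^{(i,\emptyset)}$ via the identity of eigenvalues (the nonzero spectrum of $A^*A$ and $AA^*$ coincide), which is exactly the content of \eqref{35bd}; this produces the term $m_{\mG}^{(i,\emptyset)}$, while the off-diagonal contraction of the quadratic form against the column $\by_i$ produces $|z|^2 \mG_{ii}^{(i,\emptyset)}$ after separating out the deterministic shift $-z$ in $Y_z = X - zI$ (the entry $X_{ii}$ contributes to the centered part), and the remaining fluctuation is precisely the centered quantity $\cal Z_i$ from Definition \ref{Zi-def}. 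Collecting, $G_{ii}^{-1} = -w\big(1 + m_{\mG}^{(i,\emptyset)} + |z|^2\mG_{ii}^{(i,\emptyset)} + \cal Z_i\big)$, which is \eqref{110} for $\bb T = \emptyset$.

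For general $\bb T$, I would repeat the identical computation but carried out inside the matrix $Y^{(\bb T, \bb T)}$ (equivalently, applying the $\bb T = \emptyset$ identity to the already-reduced matrix), so that every object acquires the superscript $(\bb T)$ or $(\emptyset, \bb T)$ / $(i, \bb T)$ as appropriate; the convention $\mG_{ii}^{(i,\bb T)} = 0$ when $i \in \bb T$ is consistent because in that case $G_{ii}^{(\emptyset,\bb T)} = 0$ as well and the identity degenerates trivially. The off-diagonal identity \eqref{110b} follows the same Schur-complement bookkeeping: the $(i,j)$ entry of a resolvent with $i \neq j$ factors as $-G_{ii}^{(\emptyset,\bb T)} G_{jj}^{(i,\bb T)}$ times the bilinear form coupling rows $i$ and $j$ through the doubly-reduced resolvent $\mG^{(ij,\bb T)}$, and the explicit $-w$ prefactor comes from the same $-w^{-1}$ that appeared in \eqref{110}. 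The two identities \eqref{110c} and \eqref{110d} for $\mG$ are then immediate from \eqref{110} and \eqref{110b} by the row-column reflection symmetry: one swaps $G^{(\bb U,\bb T)} \leftrightarrow \mG^{(\bb T,\bb U)}$, $Y \leftrightarrow Y^*$, $z \leftrightarrow z^*$, ${\mathrm y}_i \leftrightarrow \by_i$, $Z_i \leftrightarrow \cal Z_i$, and notes that $|z|^2 = |z^*|^2$ is symmetric under this swap, so no new computation is needed.

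The main obstacle is purely bookkeeping rather than conceptual: one must track carefully how the deterministic part $-zI$ of $Y_z$ interacts with the Schur complement, since the quadratic form $\b a_i^*(H^{[i]})^{-1}\b a_i$ expands into an $X$-part (random), a cross term linear in $z$, and a $|z|^2$-part — it is the $|z|^2$-part that, after one more application of a rank-one formula, collapses to $|z|^2 \mG_{ii}^{(i,\bb T)}$, while the remaining terms assemble into $1 + m_{\mG}^{(i,\bb T)} + \cal Z_i^{(\bb T)}$. Getting the index sets exactly right (which entries are removed in $G$ versus $\mG$, and why $\mG$ picks up $(i,\bb T)$ when $G$ carries $(\emptyset,\bb T)$) requires care with Definition \ref{definition of minor}, but each step is a direct application of the resolvent identities \eqref{111}--\eqref{Gik} together with \eqref{35bd}. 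No estimates enter the proof at all — this lemma is exact algebra, and the only subtlety is notational consistency.
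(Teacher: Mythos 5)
Your proposal follows essentially the paper's own route: Schur complement for $[G^{(\emptyset,\bb T)}_{ii}]^{-1}$ via the minor formulas \eqref{1}--\eqref{2} of Lemma \ref{basicIG}, conversion of the resulting quadratic form from the $G$-type minor to the $\mG$-type resolvent, splitting off the conditional expectation $\E_{\by_i}$ (which yields $m_\mG^{(i,\bb T)}+|z|^2\mG^{(i,\bb T)}_{ii}$, so that the remaining fluctuation is exactly $\cal Z^{(\bb T)}_{i}$ of Definition \ref{Zi-def}), and row--column reflection symmetry to get \eqref{110c}--\eqref{110d} for free. One correction at what you call the key algebraic step: the passage from $\by_i^* Y^{(i,\emptyset)} G^{(i,\emptyset)} (Y^{(i,\emptyset)})^* \by_i$ to $\by_i^*\bigl(1+w\,\mG^{(i,\emptyset)}\bigr)\by_i$ is \emph{not} the content of \eqref{35bd}. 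That formula is a trace identity relating $m_G^{(\bb T,\bb U)}$ and $m_\mG^{(\bb T,\bb U)}$; it carries no entrywise information and cannot handle a quadratic form against the fixed vector $\by_i$. What is actually needed (and what the paper uses, its identity \eqref{499}) is the exact matrix identity $A(A^*A-w)^{-1}A^*=1+w(AA^*-w)^{-1}$ applied to $A=Y^{(i,\emptyset)}$ -- elementary algebra (multiply both sides by $AA^*-w$), but genuinely stronger than the coincidence of nonzero spectra that underlies \eqref{35bd}. With that ingredient substituted, the rest of your bookkeeping goes through as in the paper: the deterministic part $-z e_i$ of $\by_i$ produces $|z|^2\mG^{(i,\bb T)}_{ii}$ directly in the expectation (no extra rank-one formula is needed there), the off-diagonal identity \eqref{110b} follows from \eqref{2} together with the same identity \eqref{499}, and the general $\bb T$ case and the $\mG$ statements are handled exactly as you describe.
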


\begin{proof}  By the row-column  reflection symmetry, we only need to prove the $G$ part of this lemma. Furthermore, for simplicity,  we prove the case $T=\emptyset$, the  general case can be proved in the same way.

We first prove \eqref{110}. Let $H=Y^* Y$. Similarly to \eqref{49H} and \eqref{410G}, we define $G^{[i]}$ and $H^{[i]}$. Then using \eqref{1} and \eqref{mfn}, we have
$$
\left[G  _{ii} \right]^{-1}=h_{ii}-w-\sum_{k,l\neq i}h  _{ik}G^{(i, \emptyset )}_{kl}h_{li}.
$$
From the definition of $ H $, we have $h  _{ik}= \by_i^* \by_k  $. Then
\be\label{422l}
\left[G  _{ii} \right]^{-1}= \by_i^* \by_i -w-
  \by_i^* Y^{(i, \emptyset )}G^{(i, \emptyset )} \left(Y^{(i, \emptyset )}\right)^* \by_i.
\ee
 For any matrix $A$, we have the identity
\be\label{499}
A (A^*  A - w)^{-1}  A^*=1+ w ( A  A^*   - w)^{-1},
\ee
and as a consequence
\be\label{424l}
Y^{(i, \emptyset )}G^{(i, \emptyset )} \left(Y^{(i, \emptyset )}\right)^*= 1+ w\mG^{(i, \emptyset )}.
\ee
Combining \eqref{422l} and \eqref{424l}, we have
\be\label{5.888}
\left[G  _{ii} \right]^{-1}=-w-w\,\by_i^* \mG^{(i, \emptyset )} \by_i
\ee
We now write
$$
\by_i^* \mG^{(i, \emptyset )} \by_i =
 \E_{\by_i}\by_i^* \mG^{(i, \emptyset )} \by_i
 +\cal Z_i
$$
By definition
$$
\E_{\by_i}\by_i^* \mG^{(i, \emptyset )} \by_i
  =\frac1N\tr  \mG^{(i, \emptyset)}+|z|^2\mG^{(i, \emptyset)} _{ii}=m_\mG^{(i, \emptyset)}+|z|^2 \mG^{(i, \emptyset)} _{ii}
$$
 which complete the proof of \eqref{110}.

We now prove \eqref{110b}.  As above, using now \eqref{2},  we have
$$
 {G_{ij} ^{(\emptyset ,\bb T) } }
  =    G_{ii}^{(\emptyset ,\bb T) } G^{(i,\bb T)}_{jj}
  \left(h_{ij}-\sum_{kl\neq ij}h  _{ik}G^{(ij, \emptyset )}_{kl}h_{lj} \right)
$$
where
$$
h_{ij}-\sum_{kl\neq ij}h  _{ik}G^{(ij, \emptyset )}_{kl}h_{lj} =
 \by_i^* \by_j
- \by_i^* Y^{(ij, \emptyset )}G^{(ij, \emptyset )} \left(Y^{(ij, \emptyset )}\right)^* \by_j.
$$
 Then using \eqref{499} again, we obtain \eqref{110b}.
\end{proof}

\subsection{ The self-consistent equation and its stability. }
We now derive the self-consistent equation for $m(w)$ and its stability estimates.
 Following \cite{ErdYauYin2010Adv}, we introduce the following control parameter:

\begin{definition} Define the control parameter
$$
\Psi= \left(\sqrt{\frac{\im m_{\rm c}+\Lambda}{N\eta}}+\frac{1}{N\eta}\right), \quad \Lambda = |m-m_{\rm c}|
$$
Notice that all quantities depend on $w$ and $z$.
Furthermore,   if $\Lambda \le C |m_c|$ then for  $w \in\b {\rm S}(b)$ (see (\ref{eqn:Salpha})),
\be\label{Psib}
|m_{\rm c}|^{-1} \Psi\le \frac{ 1}{\sqrt{  N\eta  |m_c|}}+\frac{1}{N\eta |m_c| } \le
 C  \varphi^{-b/2}.
\ee
The quantity $|m_{\rm c}|^{-1} \Psi$ will be our controlling small parameter in this paper.   \nc
\end{definition}

{

Before we start to prove Theorem \ref{sempl}, we make the following observation. The parameter $z$ can be either inside
the unit ball or outside of it.  Recall the  properties of $m_{\rm c}$ in section \ref{sec:pro}.
 By  Lemma \ref{prorhoc}, the limiting density $\rho_c$ of $YY^*$ is  supported on $[\lambda_-, \lambda_+]$, where $\lambda_- < 0$ and $\lambda_+ \sim 1$ when $|z| \le 1 - \tau$. Since $\lambda_- < 0$ in this case, we will never approach $\lambda_-$.
On the other hand, we will have to consider the behavior when $w \sim 0$.
  When $ 1+ \tau \le |z| \le \tau^{-1}$, we have $\lambda_-> 0$ and $w$ stays away from the origin by definition of $\b {\rm S} (C_\zeta)$, i.e.,
  the condition $E \ge \lambda_-/5$.
Our approach to the local Green function estimates
will use the self-consistent equation of $m(w)$. This approach depends crucially on the stability properties of this equation which can be
divided roughly into three cases: $w$ near the edges $\lambda_\pm$, $w \sim 0$ or $w$ in the bulk (defined here as the
rest of possible $w \in \b {\rm S} (C_\zeta)$).  From Lemma  \ref{pmcc1} and  Lemma \ref{pmcc12}, the behavior of $m_c$  near the edges $\lambda_\pm$
when $ |z| \ge 1 + \tau$ are identical to its behavior near the edge $\lambda_+$
when $ |z| \le 1 - \tau$. In the bulk, the behavior for both cases are the same. Thus we will only consider the case $ |z| \le 1 - \tau$ since
it covers all three different behaviors. Hence from now on, we will assume that $|z| \le 1 -\tau$.
 We emphasize that $\im m_{\rm c} \ll |m_{\rm c}|$  when   $|\lambda_+-w|\ll 1$. }
All stability results concerning  the self-consistent equation will be under the following assumption \eqref{s3}.

\begin{lemma} [Self consistent equation]\label{cor:self}
Suppose $|z| \le 1 -\tau$ for some $\tau > 0$. Then there exists  a small constant $ \alpha > 0$  independent of $N$  such that
if the estimate
 \be\label{s3}
\Lambda  \le  \alpha   |m_{c}|
\ee
holds    for some $|w|\le C$ on a set $A$ in the probability space of matrix elements for $X$,
then  in the set $A$ we have  \hp{\zeta}
\begin{align}\label{110-31g}
    w\, m   (1+ m)^2  - m  |z|^2           + 1 + m   =  \Upsilon,
\quad   \Upsilon = \OO\left ( \varphi^{   Q_\zeta} \Psi  \right )\; ,
\end{align}
 provided that $ w \in\b {\rm S}(b)$ for some   $b > 5 Q_\zeta $ with $Q_\zeta$ defined in Lemma \ref{lem:bh}. \nc
\end{lemma}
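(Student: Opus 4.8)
\textbf{Proof proposal for Lemma \ref{cor:self}.}

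The plan is to derive the self-consistent equation by starting from the exact identity \eqref{110} for the diagonal Green function entries and then controlling the fluctuation terms. Taking $\bb T = \emptyset$ in \eqref{110}, we have
\[
G_{ii} = -w^{-1}\left[1 + m_\mG^{(i)} + |z|^2 \mG_{ii}^{(i)} + \mathcal Z_i\right]^{-1}.
\]
The first step is to replace the minor quantities $m_\mG^{(i)}$, $\mG_{ii}^{(i)}$ by their full-matrix counterparts $m_\mG = m$ (using \eqref{35bd} with $|\bb U| = |\bb T|$, noting the correction term $0$), and $\mG_{ii}$, incurring only an error $\OO_\prec(1/(N\eta))$ coming from the crude bound \eqref{37mk}; since $1/(N\eta) \le \Psi$ and after division by suitable factors this is absorbed into $\varphi^{Q_\zeta}\Psi$. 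The second step is to bound the genuinely random term $\mathcal Z_i = (1 - \E_{\by_i})\by_i^* \mG^{(i)} \by_i$ using a large-deviation estimate for quadratic forms of independent subexponential entries against an independent matrix $\mG^{(i)}$ (this is the content of Lemma \ref{lem:bh}, which I am allowed to invoke). That gives $|\mathcal Z_i| \prec \varphi^{Q_\zeta}\bigl(\sqrt{\im m_\mG^{(i)}/(N\eta)} + 1/(N\eta)\bigr)$; under the a priori bound \eqref{s3}, $\im m_\mG^{(i)}$ is comparable to $\im m_{\rm c} + \Lambda$ up to $1/(N\eta)$ corrections, so $|\mathcal Z_i| \prec \varphi^{Q_\zeta}\Psi$.

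Combining these, each diagonal entry satisfies $G_{ii}^{-1} = -w(1 + m + |z|^2 \mG_{ii}) + \OO_\prec(w\varphi^{Q_\zeta}\Psi)$. I would then use the parallel identity for $\mG_{ii}$ (the row-column reflection of \eqref{110}), namely $\mG_{ii}^{-1} = -w(1 + m + |z|^2 G_{ii} + Z_i)$ with $Z_i$ controlled identically, to get a closed pair of equations for $G_{ii}$ and $\mG_{ii}$. Averaging over $i$, or equivalently first showing $G_{ii} \approx m \approx \mG_{ii}$ for all $i$ up to $\OO_\prec(\varphi^{Q_\zeta}\Psi)$ (this fluctuation averaging / stability bootstrap is standard once \eqref{s3} holds, since the map is a contraction away from the unstable circle $|z|=1$), one arrives at $m^{-1} = -w(1+m) - |z|^2 \mG_{ii} + \dots$ with $\mG_{ii}$ replaced by $m$. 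Substituting $\mG_{ii} \to m$ and clearing denominators (multiplying through by $m(1+m)$, which is bounded below by $\sim |w|^{-1/2}$ thanks to \eqref{dA2}, so no division by small quantities is lost) yields exactly
\[
w\,m(1+m)^2 - m|z|^2 + 1 + m = \Upsilon, \qquad \Upsilon = \OO(\varphi^{Q_\zeta}\Psi),
\]
after renaming constants. Throughout one checks that the algebraic manipulations are legitimate on $\b{\rm S}(b)$ with $b > 5Q_\zeta$, using \eqref{Psib} to guarantee $|m_{\rm c}|^{-1}\Psi \le C\varphi^{-b/2}$ is genuinely small, so that denominators such as $1 + m_\mG^{(i)} + |z|^2\mG_{ii}^{(i)} + \mathcal Z_i = -(wG_{ii})^{-1} \sim w^{1/2}$ stay bounded away from zero.

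The main obstacle I anticipate is the passage from the individual-entry identity to the averaged self-consistent equation: one needs the a priori bound \eqref{s3} to propagate from $\Lambda$ to control of $\max_i |G_{ii} - m_{\rm c}|$ and $\max_i |\mG_{ii} - m_{\rm c}|$, and to ensure the fluctuation-averaging cancellation in $\frac1N\sum_i Z_i$ and $\frac1N\sum_i \mathcal Z_i$ improves the naive bound. This is exactly where the hypothesis $||z| - 1| \ge \tau$ enters, since the linearized self-consistent map has derivative governed by the factor $(|z|^2 - 1)^{-1}$ appearing in $g(m) = (1 + wm(1+m)^2)/(|z|^2 - 1)$, and stability fails as $|z| \to 1$. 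I would handle this by carefully tracking the stability constant in terms of $\tau$, $|w|$, and $\kappa + \eta$ via the lower bound \eqref{ny27}, and by invoking Lemma \ref{lem:bh} for the quadratic-form concentration — the rest is bookkeeping of error terms against $\varphi^{Q_\zeta}\Psi$.
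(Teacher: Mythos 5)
There is a genuine gap in your proof, and it sits precisely at the step you describe as "incurring only an error $\OO_\prec(1/(N\eta))$." You propose to start from \eqref{110} with $\bb T=\emptyset$,
\[
G_{ii} = -w^{-1}\left[1 + m_\mG^{(i,\emptyset)} + |z|^2 \mG_{ii}^{(i,\emptyset)} + \mathcal Z_i\right]^{-1},
\]
and replace \emph{both} $m_\mG^{(i,\emptyset)}$ and $\mG_{ii}^{(i,\emptyset)}$ by their full-matrix counterparts, invoking \eqref{37mk}. But \eqref{37mk} only controls the difference of \emph{traces}, $|m - m_\mG^{(i,\emptyset)}| \le 1/(N\eta)$; it says nothing about the individual diagonal entry $\mG_{ii}^{(i,\emptyset)} - \mG_{ii}$. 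That difference is governed by the rank-one formula (the analogue of \eqref{Gik}) and is not $\OO(1/(N\eta))$ — a priori it is of the same order $|m_c|\sim|w|^{-1/2}$ as the entries themselves. Showing it is small (even $\OO(\Psi)$) requires first proving that $\mG_{ii}^{(i,\emptyset)}$ is close to the deterministic profile, which is exactly what one is trying to establish; so the replacement is circular as stated.

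The paper sidesteps this by an extra layer: it first takes $\bb T=\{i\}$ in \eqref{110c}, which kills the troublesome single-entry term since $G_{ii}^{(i,i)}=0$ by convention, leaving only a \emph{trace} $m_G^{(i,i)}$ (controlled by \eqref{37mk}) and a fluctuation $Z_i^{(i)}$ (controlled by the large-deviation Lemma \ref{lem:bh}). This yields a self-contained expression $\mG_{ii}^{(i,\emptyset)} = -1/(w(1+m)) + \OO(\varphi^{Q_\zeta/2}\Psi)$, which is then substituted into \eqref{110}. In other words, the paper expresses $\mG_{ii}^{(i,\emptyset)}$ in terms of $m$ directly rather than comparing it to $\mG_{ii}$. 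A secondary consequence: the large-deviation bound on $\mathcal Z_i$ in \eqref{130} depends on $\im\mG_{ii}^{(i,\emptyset)}$, and the paper needs the $\bb T=\{i\}$ step also to control $\im\mG_{ii}^{(i,\emptyset)}$ via \eqref{479}; your proposal does not explain how that imaginary part is bounded. Finally, the "fluctuation averaging" you invoke for the cancellation in $\frac1N\sum_i \mathcal Z_i$ is not needed (and not used) for this lemma — the naive bound $|\mathcal Z_i|\le\varphi^{Q_\zeta}\Psi$ suffices for \eqref{110-31g}; the averaging improvement is what underlies the stronger Lemma \ref{lem:Zlem} later. (A small slip: $|m(1+m)|\sim|w|^{-1}$, not $|w|^{-1/2}$, but this does not affect the argument.)
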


\begin{proof}
By  \eqref{dA2}, \eqref{26ssa} and  \eqref{s3}, for $|z| \le  1-t$
the following inequalities   hold on the set $A$:
 \be\label{81}
|w|^{-1}\frac 1 { |1+  m|^2  } \le |w|^{-1}\frac 1 { |1+  m_{\rm c} + \OO(\Lambda)  |^2   }  \le C,
\ee
\be\label{80}
\left |  \im  \frac 1 { w(1+  m)   }   \right | \le   \left | \im  \frac 1 { w(1+  m_{\rm c})   }  \right | + \left |  \frac 1 { w(1+  m_{\rm c})   }  (m-m_{\rm c}) \frac 1 { (1+  m)   }
 \right | \le \im m_{\rm c} + C \Lambda.
\ee
Furthermore, using \eqref{81}, \eqref{dA2}, \eqref{26ssa}, \eqref{s3} and (\ref{defmc1}), we have in the set $A$
\be\label{82}
  1+ m  - \frac {|z|^2}   { w(1+  m)   }   = 1+ m_{\rm c}  -  \frac {|z|^2}   { w(1+  m_{\rm c})   } + \OO (\Lambda )   = \frac 1 { w m_{\rm c}} + \OO (\Lambda ).
 \ee
 The origin of the self-consistent equation (\ref{110-31g}) relies on the choice
$\bT = \{i\}$ in  \eqref{110c}:
\be\label{110c1}
\left[\mG^{(i, \emptyset)} _{ii} \right]^{-1}
  =   - w\left[1+  m_ G^{( i ,i)}+Z^{(i)}_{i }   \right].
\ee
   By  definition of $\Psi$ and \eqref{37mk},
\be\label{43}
 |m_ G^{( i ,i)}- m| \le \frac C {  N \eta} \le  C \Psi .
\ee
Moreover, we have from \eqref{130} that \hp{\zeta} in $A$
\be\label{44}
| Z^{(i)}_{i }| \le    \varphi^{ Q_\zeta/2} \sqrt{\frac{\im m_G^{(i,i)}+ |z|^2 \im G^{(i, i)}_{ii} }{N\eta}} \le
  \varphi^{ Q_\zeta/2} \Psi
\ee
where we have used \eqref{43}, \eqref{s3}  and, by definition,  $G^{(i, i)}_{ii} = 0$.
We would like to  estimate $ (\mG_{ii}^{(i,\emptyset)})^{-1}$ in (\ref{110c1}) by treating $( 1+  m  )$ as the main term and the rest as error terms.
 From the  equations \eqref{s3}  and  \eqref{Psib},
the ratio between the error terms and the main term  for  $ w \in\b {\rm S}(b)$ with  $b > 5 Q_\zeta $ is bounded by
\be
 |m|^{-1} | Z^{(i)}_{i }| +  |m|^{-1}  |m_ G^{( i ,i)}- m|  \le \varphi^{- Q_\zeta}.
\ee
  Therefore  for any  $ w \in\b {\rm S}(b)$ with  $b > 5 Q_\zeta $ we have \hp{\zeta}
\begin{equation}\label{110a1}
 \mG_{ii}^{(i,\emptyset)}
= - \frac 1 { w( 1+  m  )}  +  \cal E_1
\end{equation}
where
\begin{equation}\label{457}
 \cal E _1 =    w^{-1}\frac 1 { (1+  m)^2  }\Big [  m_ G^{( i ,i)}- m + Z^{(i)}_{i }    \Big ]  +  \OO\left ( \frac {|Z^{(i)}_{i }|^2 + \frac 1 { (N \eta)^2} } { |w| |1+  m|^3  } \right )  = \OO(\varphi^{ Q_\zeta/2} \Psi)
\end{equation}
where we have used \eqref{81} and $ |m_c| \sim |w|^{-1/2}$.
{ Together with} \eqref{80},  we thus have \hp{\zeta}
 \be\label{479}
\left | \im \mG_{ii}^{(i,\emptyset)} \right | \le  \left | \im \frac 1 { w( 1+  m  )} \right |
 +  \OO(\varphi^{ Q_\zeta/2} \Psi)  \le \im m_{\rm c} + C \Lambda +  \OO(\varphi^{ Q_\zeta/2} \Psi).
\ee

Using this estimate, \eqref{37mk} and  \eqref{110a1}, we can estimate $\cal Z_{i }:= \cal Z_{i }^{(\emptyset )}$ by
\be\label{485}
|\cal Z_{i }|
\leq  \varphi^{Q_\zeta/2} \sqrt{\frac{\im m_\mG^{(i,\emptyset )}+ |z|^2 \im\mG^{(i, \emptyset)}_{ii} }{N\eta}}
\le  \varphi^{Q_\zeta/2} \sqrt{\frac{\im m+ \im m_{\rm c} +  \Lambda +  \varphi^{Q_\zeta/2} \nc \Psi    }{N\eta}}  + \frac  {  \varphi^{Q_\zeta}}  { N \eta}
\le  \varphi^{   Q_\zeta } \Psi
\ee
 We can now use \eqref{485},   \eqref{110a1}  and  \eqref{37mk} to estimate the right hand side of   \eqref{110} such that
\begin{align}\non
 G _{ii}
 & =   - w^{-1}\left[1+  m_\mG^{(i, \emptyset )}+   |z|^2 \mG_{ii}^{(i, \emptyset)} +\cal Z_{i } \right]^{-1} \\
&   = - w^{-1}\left[1+  m- \frac { |z|^2 } { w( 1+ m)} + (m_\mG^{(i, \emptyset )} - m) +  \cal E_1   + \cal Z_{i } \right]^{-1}\label{110-29} \\
 & =   -   w^{-1} \left [ 1+ m  -\frac { |z|^2 } { w(1+ m)}      \right ]^{-1}  - \cal E_2   \label{110-31}
\end{align}
where  $\cal E_1$ and $\cal Z_{i }$ are bounded in  \eqref{457} and \eqref{485}  and  $\cal E_2$ is bounded by
$$
\cal E_2 =  \OO\left (  w^{-1} \left [ 1+ m  - \frac { |z|^2 } { w(1+ m)}      \right ]^{-2} \varphi^{  Q_\zeta}  \Psi  \right ) \le \OO(\varphi^{  Q_\zeta}  \Psi).
 $$
In the last inequality, we have used  \eqref{82} to bound $ 1+ m  - \frac { |z|^2 } { w(1+ m)}  $ and  \eqref{dA2} for $m_{\rm c}$.

Summing  over the index $i$ in \eqref{110-31},  we have
\begin{align}\label{110-315}
0
= w m +  \left [ 1+ m  -\frac { |z|^2 } { w(1+ m)}      \right ]^{-1}  + \OO( |w|\varphi^{   Q_\zeta} \Psi )
\end{align}
 Hence we have proved
$$
 0
  = w m   ( 1+ m)^2  -   m |z|^2      + 1+m  =  \OO \Big [    \big (|w| |m+1|^2+|z^2| \big )\varphi^{   Q_\zeta}  \Psi \Big ]
 $$
Together with the assumption \eqref{s3} on $\Lambda$ and \eqref{dA2} on the order of $m_{\rm c}$, this proves \eqref{110-31g}.
\end{proof}

 \begin{corollary}\label{xdz49}  Under the assumptions of Lemma \ref{cor:self}, the following properties hold.
Let $\bb T$, $\bb  U\in \llbracket 1, N\rrbracket$ such that $i\notin \bb T$ and $|\bb T|+| \bb N|\leq  C$.
 For any  $\zeta>0$
 and  $ w \in\b {\rm S}(b)$ for some   $b > 5 Q_\zeta $ with $Q_\zeta$ defined in Lemma \ref{lem:bh}, \nc
we have  \hp{\zeta}  for any  $i\in \bb U$ that
\be\label{2gsh}
G_{ii}^{(\bb T, \bb U)}-G^{( \emptyset, i)}_{ii}=\OO(\varphi^{  Q_\zeta}\Psi)\, .
\ee
If $i \not \in \bb U$, then
\be\label{3gsh}
G_{ii}^{(\bb T,\bb U)}-G _{ii}=\OO(\varphi^{   Q_\zeta}\Psi)  \, .
\ee
\end{corollary}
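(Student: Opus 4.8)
The plan is to derive \eqref{2gsh} and \eqref{3gsh} by peeling the indices of $\bb T$ and $\bb U$ off one at a time, controlling the change of the relevant diagonal Green function entry at each step by the expansion identities of Lemmas \ref{lem: GmG} and \ref{idm} together with a priori control of the boundedly many minors produced along the way. First I would record the a priori input. On the set $A$ of Lemma \ref{cor:self} we have $\Lambda\le\alpha|m_{\rm c}|$ and $w\in\b{\rm S}(b)$ with $b>5Q_\zeta$. By \eqref{37mk}, every pair $(\bb T',\bb U')$ with $|\bb T'|+|\bb U'|\le C$ satisfies $|m_G^{(\bb T',\bb U')}-m|+|m_\mG^{(\bb T',\bb U')}-m|\le C(N\eta)^{-1}\le C\varphi^{-b}|m_{\rm c}|$, so the smallness hypothesis \eqref{s3} still holds with $m$ replaced by $m^{(\bb T',\bb U')}$; hence the analysis of Lemma \ref{cor:self}, applied to $Y^{(\bb T',\bb U')}$ in place of $Y$, yields \hp{\zeta}: $|G_{ii}^{(\bb T',\bb U')}|\sim|\mG_{ii}^{(\bb T',\bb U')}|\sim|m_{\rm c}|\sim|w|^{-1/2}$, $\,\im G_{ii}^{(\bb T',\bb U')}+\im\mG_{ii}^{(\bb T',\bb U')}\le C(\im m_{\rm c}+\Lambda+\varphi^{Q_\zeta}\Psi)$, and (from \eqref{110b} and the large deviation bound \eqref{130}) $|G_{ij}^{(\bb T',\bb U')}|+|\mG_{ij}^{(\bb T',\bb U')}|\le\varphi^{Q_\zeta}\Psi$ for $i\ne j$. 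Moreover, by \eqref{Psib}, $\Psi\le C\varphi^{-b/2}|m_{\rm c}|$.

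Next I would prove two one-step estimates. \emph{(a) Removing a column.} For $k\notin\bb T'$ with $k\ne i$, the minor form of \eqref{111} gives $G_{ii}^{(\bb T'\cup\{k\},\bb U')}-G_{ii}^{(\bb T',\bb U')}=-G_{ik}^{(\bb T',\bb U')}G_{ki}^{(\bb T',\bb U')}/G_{kk}^{(\bb T',\bb U')}$, which by the a priori bounds is $\OO\big((\varphi^{Q_\zeta}\Psi)^2/|m_{\rm c}|\big)=\OO(\varphi^{2Q_\zeta-b/2}\Psi)$. \emph{(b) Removing a row.} For $l\notin\bb U'$ with $l\ne i$, I would use the second identity of \eqref{Gik} in its minor form, with $G^{(\bb T',\bb U'\cup\{l\})}$ (which is independent of the $l$-th row $\mathrm y_l^{(\bb T')}$) as the ``smaller'' resolvent:
\be
G_{ii}^{(\bb T',\bb U')}-G_{ii}^{(\bb T',\bb U'\cup\{l\})}=-\frac{\big(G^{(\bb T',\bb U'\cup\{l\})}\mathrm y_l^{(\bb T')*}\big)_i\,\big(\mathrm y_l^{(\bb T')}G^{(\bb T',\bb U'\cup\{l\})}\big)_i}{1+\mathrm y_l^{(\bb T')}G^{(\bb T',\bb U'\cup\{l\})}\mathrm y_l^{(\bb T')*}}.
\ee
A Schur complement computation combined with \eqref{499} shows that the denominator equals $-w^{-1}\big[\mG_{ll}^{(\bb T',\bb U')}\big]^{-1}$, of modulus $\sim|w|^{-1}|m_{\rm c}|^{-1}\sim|m_{\rm c}|$; while for the numerator, writing $\mathrm y_l^{(\bb T')}$ in terms of the centered variables $X_{lj}$ (the diagonal $-z$ producing, when $l\notin\bb T'$, a term proportional to the off-diagonal entry $G_{il}^{(\bb T',\bb U'\cup\{l\})}$), the large deviation bound \eqref{130} together with $\sum_j|G_{ij}^{(\bb T',\bb U'\cup\{l\})}|^2=\eta^{-1}\im G_{ii}^{(\bb T',\bb U'\cup\{l\})}$ and the a priori bound on $\im G_{ii}$ give $\big|\big(G^{(\bb T',\bb U'\cup\{l\})}\mathrm y_l^{(\bb T')*}\big)_i\big|+\big|\big(\mathrm y_l^{(\bb T')}G^{(\bb T',\bb U'\cup\{l\})}\big)_i\big|=\OO(\varphi^{Q_\zeta}\Psi)$. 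Hence this step too changes $G_{ii}$ by $\OO\big((\varphi^{Q_\zeta}\Psi)^2/|m_{\rm c}|\big)=\OO(\varphi^{2Q_\zeta-b/2}\Psi)$.

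Then I would telescope. For \eqref{3gsh} (case $i\notin\bb U$): build $G^{(\bb T,\bb U)}$ up from $G^{(\emptyset,\emptyset)}=G$ by first adjoining the elements of $\bb T$ to the column set and then those of $\bb U$ to the row set; since $i\notin\bb T$ and $i\notin\bb U$, every step removes an index $\ne i$ and is of type (a) or (b), so summing the $|\bb T|+|\bb U|\le C$ increments and using $b>5Q_\zeta$ gives $G_{ii}^{(\bb T,\bb U)}-G_{ii}=\OO(\varphi^{Q_\zeta}\Psi)$. For \eqref{2gsh} (case $i\in\bb U$): run the same telescoping but starting from $G^{(\emptyset,\{i\})}$ and adjoining the elements of $\bb T$ and of $\bb U\setminus\{i\}$, so that again no step ever removes the index $i$; this yields $G_{ii}^{(\bb T,\bb U)}-G_{ii}^{(\emptyset,\{i\})}=\OO(\varphi^{Q_\zeta}\Psi)$.

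I expect the main obstacle to be the a priori step: one must know, \hp{\zeta}, that every minor $(\bb T',\bb U')$ arising above has $|G_{ii}^{(\bb T',\bb U')}|\sim|m_{\rm c}|$ (so that the denominators in (a) and (b) never degenerate) and the off-diagonal smallness $|G_{ij}^{(\bb T',\bb U')}|=\OO(\varphi^{Q_\zeta}\Psi)$. This amounts to re-running the self-consistent equation analysis of Lemma \ref{cor:self} for each of these finitely many minors — legitimate precisely because \eqref{37mk} transfers \eqref{s3} — and the off-diagonal bound carries the mild circularity typical of such estimates. Once it is available, each individual removal gains an extra factor $\Psi/|m_{\rm c}|\le C\varphi^{-b/2}$ beyond the target $\varphi^{Q_\zeta}\Psi$, so the telescoping over boundedly many steps is routine.
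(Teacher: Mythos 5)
Your proposal is mathematically sound, but it takes a genuinely different and considerably longer route than the paper. The paper's proof is a ``direct comparison'' argument: it shows that $G_{ii}^{(\bb T,\bb U)}$ satisfies the same approximate self-consistent equation as $G_{ii}$ (respectively $\mG_{ii}^{(i,\emptyset)}$), namely $G_{ii}^{(\bb T,\bb U)} = -w^{-1}\left[1+m-\frac{|z|^2}{w(1+m)}\right]^{-1} + \OO(\varphi^{Q_\zeta}\Psi)$, using the transfer via \eqref{37mk}; comparing the two formulas immediately gives \eqref{3gsh} and, after a row--column reflection, \eqref{2gsh}. You instead telescope, controlling each removal of a single column or row by the quadratic-in-off-diagonal resolvent identities, and you rely on a priori control of all the intermediate minors.

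The concern you already flagged is the real one. Your a priori input (iii), the off-diagonal bound $|G_{ij}^{(\bb T',\bb U')}|\le\varphi^{Q_\zeta}\Psi$, is not an independent ingredient: deriving it from \eqref{110b} and \eqref{130} requires exactly the diagonal minor estimates $|G_{ii}^{(\bb T',\bb U')}|\sim|m_c|$ and $\im G_{ii}^{(\bb T',\bb U')}\le C(\im m_{\rm c}+\Lambda+\varphi^{Q_\zeta}\Psi)$, and the only available route to those is to rerun the self-consistent equation on the minor, which already yields the \emph{precise} formula \eqref{77}/\eqref{78}. But once you have that formula, the direct comparison gives the Corollary immediately, and the telescoping becomes logically superfluous. (There is no circularity in the strict sense, because the chain $\mG_{ij}^{(ij,\bb T)} \to G_{ij}^{(\bb T',\bb U')}$ bottoms out at $G_{ii}^{(ij,ij)}=G_{jj}^{(ij,ij)}=0$ so that no off-diagonal input is ever fed back; but the necessary diagonal input is essentially the full strength of the Corollary.) One further small point: since $\bb T\cap\bb U$ need not be empty, your step (b) should allow the case $l\in\bb T'$, in which case the $-z\delta_{lj}$ term simply does not appear in $\mathrm y_l^{(\bb T')}$; this is harmless, but worth stating. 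Your quantitative budget is correct: each removal costs $\OO\bigl((\varphi^{Q_\zeta}\Psi)^2/|m_{\rm c}|\bigr) = \OO(\varphi^{2Q_\zeta-b/2}\Psi)\ll\varphi^{Q_\zeta}\Psi$ because $b>5Q_\zeta$ and $|m_{\rm c}|^{-1}\Psi\le C\varphi^{-b/2}$ by \eqref{Psib}, so the boundedly many increments sum to the claimed bound. In short: correct, but with a detour that the paper's direct argument avoids.
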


\begin{proof} We first prove the case $i \not \in \bb  U$.  We claim that the parallel version of \eqref{110-31}
holds as well, i.e.,
\be\label{77}
G_{ii}^{(\bb T, \bb U)}= -   w^{-1} \left [ 1+ m  -\frac { |z|^2 } { w(1+ m)}      \right ]^{-1}  + \OO(\varphi^{Q_\zeta}\Psi)
\ee
Comparing \eqref{77} with \eqref{110-31}, we have proved \eqref{3gsh}.

We now prove the case $i  \in \bb  U$.
By row-column symmetry, we have
$$
G^{( \bb T, \bb U)} =  \Big [ (Y^{(\bb T, \bb U)})^* Y^{(\bb T, \bb U)} - w \Big ]^{-1}  = \Big [ A^{( \bb U, \bb T)} (A^{( \bb U, \bb T)})^* - w \Big ]^{-1} := \mG (A)_{ii}^{(\bb U , \bb T)}\,   \quad A = Y^* .
$$
Hence we have   to prove,   for $i  \in \bb  U$ and $i \not \in \bb T$, that
$$
\mG (A)_{ii}^{(\bb U , \bb T)}-\mG (A)_{ii}^{(i  ,\emptyset)}=\OO(\varphi^{Q_\zeta}\Psi)  \, .
$$
We will omit  $A$ in the following argument. \nc

One can  extend  \eqref{110c1}-\eqref{457} to $\mG_{ii}^{(\bb U , \bb T)}$ and obtain
\be\label{78}
\mG_{ii}^{(\bb U , \bb T)}
= - \frac 1 { w( 1+  m  )}  +  \cal E_1^{(\bb T, \bb U)},\quad \cal E_1^{(\bb T, \bb U)}=\OO( \varphi^{Q_\zeta} \Psi)
\ee
as in \eqref{110a1}. Comparing \eqref{78} with the equation for $\mG_{ii}^{(i ,\emptyset)}$ \eqref{110a1}, we obtain \eqref{2gsh} in the case $i\in \bb U$.

\end{proof}

We define for any sequence $A_i$ ($1\leq i\leq N$) the quantity
$$
[A]:=  {\bf N^{-1} } \sum _i A_i.
$$
In application, we often use $ A=Z$ or $A=\cal Z$.   Define
$$
\cal D(m)=m^{-1} + w +  wm-\frac{|z|^2}{1+m}.
$$
The following lemma is our stability  estimate for  the equation   $ \cal D(m)=0$.
 Notice that it is a deterministic result. \nc It assumes that $|\cal D(m)| $ has a crude upper bound and then derives
a more precise  estimate on $\Lambda=|m-m_c|$.

\begin{lemma} [Stability  of the self-consistent  Equation] \label{lem:fm}   Suppose  that $1 - |z|^2  > t >  0$. Let
$\delta: \mathbb{C} \mapsto \mathbb{R}_+$ be a continuous function
  satisfying  the bound
\be\label{d2}
|\delta(w)| \leq (\log N)^{-8} |w^{1/2}|.
\ee
Suppose that,   for a  fixed $E$ with $ 0 \leq E \le C$ for some constant $C$ independent of $N$,    \eqref{s3}  and  the estimate
\be\label{d1}
|\Upsilon (m)(w, z)|  =|\cal D(m)m(1+m) (w, z)|\leq \delta(w) |m_{\rm c} |^2
 \ee
hold for $10\ge \eta\geq  \tilde \eta$ for some $\tilde \eta$ which may depend on    $N$.
 Denote $ \e^2 := \kappa + \eta $ where $\kappa= |E- \lambda_+|$ \eqref{37} in our case that $1 - |z|^2  > t >  0$.
 Then there is an $M_0$ large enough independent of $N$ such that  for any fixed $M>M_0$ and  $N$ large enough (depending on $M$)
the following estimates for $ \Lambda = |m-m_{\rm c}|$ hold for  $10\ge \eta\geq  \tilde \eta$:
\begin{align}
&  \text{Case 1}: \;    \Lambda \nc \le  \frac {M^{3/2} \delta} {  |w| }    \quad \text{or} \;
 \Lambda   \ge   \frac{1}{M ^2|w^{ 1/2}|}
\quad &
 \text{if}  \; \e^2  \ge 1/M^2  \label{c1} \\
& \text{Case 2a}: \;   \Lambda  \le  \frac {M \delta} { {\e } }     \quad \text{or} \;    \Lambda  \ge   \frac {2M \delta} { {\e } }   \quad
 & \text{if}   \;  \e^2   \le 1/M^2 \; \text{and}   \; \delta \le \frac { \e^2 } { M^{3/2}} \label{2a} \\
& \text{Case 2b}: \;  \Lambda  \le   M \sqrt \delta,    \quad \text{or} \;  \Lambda  \ge   2 M \sqrt \delta
 \quad &
\text{if} \;  \e^2  \le 1/M^2 \; \text{and} \; \delta \ge \frac { \e^2 } { M^{3/2}}  \label{2b}
\end{align}
The three upper bounds (i.e., the first inequalities in \eqref{c1}-\eqref{2b}) can be summarized as
 \be\label{ag24}
  \Lambda  \leq C  \frac{ \delta(w)|w|^{-1}}{ \sqrt{\kappa+\eta+\delta}} .
 \ee
 \end{lemma}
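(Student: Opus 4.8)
The plan is to treat the defining relation \eqref{defmc1} for $m_{\rm c}$ and the perturbed relation $\cal D(m)m(1+m)=\Upsilon$ as two instances of a single algebraic equation, and to study the map $m\mapsto\cal D(m)m(1+m)$ near $m=m_{\rm c}$ by Taylor expansion. First I would record that $m_{\rm c}$ satisfies $\cal D(m_{\rm c})=0$ exactly, so that, writing $\Delta=m-m_{\rm c}$, a second-order Taylor expansion gives
\[
\Upsilon=\cal D(m)m(1+m)=\big(\cal D'(m_{\rm c})m_{\rm c}(1+m_{\rm c})\big)\Delta+\OO\!\big(\Delta^2\big),
\]
where the $\OO(\Delta^2)$ constant is controlled because \eqref{s3} keeps $m$ comparable to $m_{\rm c}$ and $1+m$ bounded away from $0$ (by \eqref{dA2}). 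The crucial point is to identify the size of the linear coefficient $\cal D'(m_{\rm c})m_{\rm c}(1+m_{\rm c})$. A direct computation of $\cal D'(m)=-m^{-2}+w+\frac{|z|^2}{(1+m)^2}$ together with the identity \eqref{ny27} — which is exactly the statement that
\[
\Big|(-1+|z|^2)\big(m_{\rm c}-\tfrac{-2}{3+\al}\big)\big(m_{\rm c}-\tfrac{-2}{3-\al}\big)\Big|\geq C\frac{\sqrt{\kappa+\eta}}{|w|}
\]
and whose left side is, up to the explicit factors from \eqref{dA2}, comparable to $|w|\,|m_{\rm c}|^2\,|\cal D'(m_{\rm c})m_{\rm c}(1+m_{\rm c})|$ — shows that
\[
\big|\cal D'(m_{\rm c})\,m_{\rm c}(1+m_{\rm c})\big|\sim\frac{\sqrt{\kappa+\eta}}{|w|\,|m_{\rm c}|}\sim\frac{\sqrt{\kappa+\eta}}{|w|^{1/2}}=\frac{\e}{|w|^{1/2}},
\]
using $|m_{\rm c}|\sim|w|^{-1/2}$ from \eqref{dA2} and $\e^2=\kappa+\eta$. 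So the linear term has size $\sim(\e/|w|^{1/2})\Lambda$ and the quadratic term has size $\sim|m_{\rm c}|^{-1}\Lambda^2\sim|w|^{-1/2}\Lambda^2$ (the natural curvature scale for a square-root edge).

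With this in hand the inequality to analyze becomes a scalar quadratic bound: from \eqref{d1}, $|\Upsilon|\le\delta|m_{\rm c}|^2\sim\delta/|w|$, so
\[
c\,\frac{\e}{|w|^{1/2}}\,\Lambda\;-\;C\,\frac{1}{|w|^{1/2}}\,\Lambda^2\;\le\;\frac{C'\delta}{|w|},
\]
i.e. (multiplying by $|w|^{1/2}$) $c\,\e\,\Lambda-C\,\Lambda^2\le C'\delta|w|^{-1/2}$. The plan is then a standard dichotomy argument on this inequality, with the parameter $M$ playing the role of the separation constant. When $\e\geq 1/M$ (Case 1), the linear term dominates unless $\Lambda$ is already of order $1/|w^{1/2}|\sim|m_{\rm c}|$, giving the two alternatives in \eqref{c1}: either $\Lambda\lesssim M^{3/2}\delta/(\e|w|^{1/2})\le M^{3/2}\delta/|w|$ (using $\e\sim 1$ there, or absorbing $\e^{-1}\le M$), or $\Lambda\gtrsim M^{-2}|w^{-1/2}|$. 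When $\e\le 1/M$ one compares $\delta$ with $\e^2$: if $\delta\lesssim\e^2/M^{3/2}$ the quadratic term is negligible at scale $\Lambda\sim\delta/\e$, and one gets the gap $\Lambda\le M\delta/\e$ or $\Lambda\ge 2M\delta/\e$ of \eqref{2a}; if $\delta\gtrsim\e^2/M^{3/2}$ the quadratic term dominates, $\Lambda^2\sim\delta$, yielding $\Lambda\le M\sqrt\delta$ or $\Lambda\ge 2M\sqrt\delta$ of \eqref{2b}. The smallness hypothesis \eqref{d2}, $\delta\le(\log N)^{-8}|w^{1/2}|$, guarantees that all the claimed "lower" alternatives lie strictly above the "upper" alternatives for $N$ large (so the gap is genuine), and that the upper branch is consistent with \eqref{s3}.

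Finally, to obtain the uniform upper bound \eqref{ag24}, I would check that each of the three upper alternatives $\delta/|w|$, $M\delta/\e$, $M\sqrt\delta$ is bounded by $C\,\delta|w|^{-1}(\kappa+\eta+\delta)^{-1/2}$ in the relevant regime: in Case 1, $\kappa+\eta+\delta\sim\e^2\sim 1$, so the bound reads $\Lambda\lesssim\delta/|w|$, matching; in Case 2a, $\kappa+\eta+\delta\sim\e^2$ (since $\delta\le\e^2/M^{3/2}$), so the right side is $\sim\delta/(|w|\e)$, matching $M\delta/\e$ up to the constant (here one absorbs $M$ into $C$ and uses $|w|\sim 1$ near the edge, or keeps track that $|w|\gtrsim\lambda_+\sim 1$ in this regime); in Case 2b, $\kappa+\eta+\delta\sim\delta$, so the right side is $\sim\delta/(|w|\sqrt\delta)=\sqrt\delta/|w|\sim\sqrt\delta$, matching $M\sqrt\delta$. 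The main obstacle I anticipate is not the dichotomy bookkeeping but the first step: extracting the sharp two-sided bound on the linear coefficient $\cal D'(m_{\rm c})m_{\rm c}(1+m_{\rm c})\sim\e/|w|^{1/2}$ from the factorized lower bound \eqref{ny27} together with the edge expansion \eqref{A18}/\eqref{esmallfake} — one must verify that the two roots $\frac{-2}{3\pm\al}$ appearing in \eqref{ny27} are precisely the critical values of $m_{\rm c}$ at $\lambda_\pm$, so that $m_{\rm c}-\frac{-2}{3+\al}\sim(w-\lambda_+)^{1/2}\sim\e$ near the relevant edge while the other factor stays of order $1$, and that no cancellation occurs in the curvature term — and then to confirm that the upper bound on the same coefficient (needed so that the linear term cannot be spuriously large) follows from the explicit formula for $\cal D'$ and \eqref{dA2}.
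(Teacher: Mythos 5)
You follow the same route as the paper: writing $\mathcal D(m)m(1+m)=P_{w,z}(m)=wm(1+m)^2+m(1-|z|^2)+1$ and using $P_{w,z}(m_{\rm c})=0$, you expand in $u=m-m_{\rm c}$ (the expansion is exact, $P_{w,z}(m)=Au+Bu^2+wu^3$ with $A=P_{w,z}'(m_{\rm c})$ and $2B=P_{w,z}''(m_{\rm c})$), lower-bound the linear coefficient through \eqref{ny27} and the edge expansion \eqref{A18}, and run the three-case dichotomy by contradiction, checking \eqref{ag24} at the end exactly as the paper does; your identification of $-2/(3\pm\al)$ as the critical values $m_{\rm c}(\lambda_\pm)$ is indeed the key point.

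However, your two coefficient estimates are off by powers of $|w|$, and as written this breaks Case 1 in the small-$|w|$ regime --- a regime the lemma must cover, since for $|z|\le 1-\tau$ one has $\lambda_-<0$ and the result is used down to $\eta\sim\varphi^{b}N^{-1}|w|^{1/2}$ with $E$ possibly near $0$, i.e.\ $|w|\ll1$. From $A=\frac{2(|z|^2-1)}{m_{\rm c}(1+m_{\rm c})}\,\bigl(m_{\rm c}-\tfrac{-2}{3+\al}\bigr)\bigl(m_{\rm c}-\tfrac{-2}{3-\al}\bigr)$ together with $|m_{\rm c}(1+m_{\rm c})|\sim|w|^{-1}$ from \eqref{dA2}, the bound \eqref{ny27} gives $|A|\gtrsim\sqrt{\kappa+\eta}$, not $\sqrt{\kappa+\eta}/|w^{1/2}|$ (you divided by $|m_{\rm c}|$ once instead of by $|m_{\rm c}(1+m_{\rm c})|$); indeed, as $w\to0$ one computes $A\to-2(1-|z|^2)^2$, so $|A|\sim1$ while your claimed two-sided bound diverges there. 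Similarly $|m_{\rm c}|^{-1}\sim|w|^{1/2}$, not $|w|^{-1/2}$, so the quadratic coefficient is $B=w(3m_{\rm c}+2)=\OO(|w|^{1/2})$, not $\OO(|w|^{-1/2})$. With your sizes, under the Case 1 contradiction hypothesis $\Lambda<M^{-2}|w|^{-1/2}$ the quadratic term $|w|^{-1/2}\Lambda^2$ can exceed your linear term $(\sqrt{\kappa+\eta}/|w|^{1/2})\Lambda$ by a factor of order $M^{-2}|w|^{-1/2}$, so no contradiction follows once $|w|\ll M^{-4}$; with the correct sizes ($|A|\ge C/M$, $|B|=\OO(|w|^{1/2})$, cubic coefficient $w$) the quadratic and cubic terms are $\OO(M^{-2})$ relative to the linear one and the argument closes, which is how the paper argues. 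Cases 2a and 2b, where $|w|\sim1$, are unaffected by these slips, and the rest of your bookkeeping, including the verification of \eqref{ag24}, is fine once the coefficients are corrected.
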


\begin{proof} Define the polynomial
$$
P_{w, z}(x) = w x (1+x)^2 +x(1-|z|^2) +1.
$$
By definition of $\Upsilon$ \eqref{110-31g},  we have
$$
 P_{w, z}(m)  = w m (1+m)^2 +m (1-|z|^2) +1 = \Upsilon = \cal D(m) m (1+m).
$$
Since $P_{w, z}(m_{\rm c})  = 0$, we have
$$
   w u^3+ B(w,z) u^2 + A(w,z) u =\Upsilon,   \quad u = m-m_{\rm c},
$$
$$
  B= w( 3 m_{\rm c}  + 2),
 $$
 $$
\quad A (w, z) = w (3 m_{\rm c} + 1) (m_{\rm c} + 1) + 1- |z|^2= 2 w m_{\rm c} (1+ m_{\rm c}) - \frac 1 m_{\rm c}.
$$
By definition of $P_{w, z}$, we can express $A$ and $B$ by
$$
P_{w, z}' (m_{\rm c} (w, z) ) =
A(w, z), \quad P_{w, z}'' (m_{\rm c} (w, z) ) =
2B(w, z).
$$

\noindent
{\it Case 1:}
In this case, we claim that the following estimates concerning  $A$ and $B$ hold:
\be\label{AB}
 |A| \ge C /M
, \quad   B =  \OO(|w^{ 1/2}|).
 \ee
Since $A$ and $B$ are explicit functions of $m_{\rm c}$,  equation  \eqref{AB} is  just properties  of the solution $m_{\rm c}$  of the third order polynomial
$P_{w, z}(m)$. We now give a sketch of the proof.
Consider first the case  $|w| \ll 1$.   Then \eqref{AB} follows from \eqref{dA2}, \eqref{26ssa}, \eqref{A20a} and the definitions of $A$ and $B$.

 We now assume  that $ w\sim 1$ .  Clearly,  $ |B|\leq  \OO(1) \sim |w^{ 1/2}|$, which gives \eqref{AB} for $B$.
To  prove $|A|\geq C/M$,  by  definition of $m_{\rm c}$  \eqref{defmc1}, we have $w= \frac{-1 - m_{\rm c} + m_{\rm c} |z|^2}{ m_{\rm c} (1 + m_{\rm c})^2 }$.
Thus we can rewrite
$A$ as
$$
A=\frac{-1 - 3 m_{\rm c} + 2 m_{\rm c}^2 (-1 + |z^2|)}{m_{\rm c}(1+m_{\rm c})}=\frac{2(-1 + |z^2|)}{m_{\rm c}(1+m_{\rm c})}(m_{\rm c}-a_+)(m_{\rm c}-a_-),
$$
$$
\quad a_\pm: =\frac{3 \pm \sqrt{ 1 + 8 |z|^2}}{ 4 (-1 + |z|^2)}=\frac{-2}{3\mp \sqrt{ 1 + 8 |z|^2}}\, .
$$
{ By  \eqref{dA2} and \eqref{ny27} (where $\al= \sqrt{ 1 + 8 |z|^2}$), we obtain \eqref{AB}.}

  We now prove \eqref{c1} by contradiction.
If \eqref{c1} is violated then with $u = m-m_c$ we have
$$
    |\Upsilon| = |u| | A(w,z)  +  B(w,z) u  + w u^2 | \ge     \frac { M^{3/2} \delta} {  |w| } \left [ {\frac C M} -  \frac {C_2}{ M^{2}  }
-  \frac { C_3} {M^4} \right ]   \ge   \frac { C \sqrt  M \ \delta} {   |w| },
$$
where $M$ is a large constant  in the last inequality.  By \eqref{d1} and \eqref{dA2},  $|\Upsilon| \le  C \delta  / |w|$. Thus
we have
$$
  \frac { C \sqrt  M \delta} {   |w| } \le  |\Upsilon|  \le    \frac {C \delta}  { |w|}
$$
 which is a contradiction provided that $M$ is large enough.

{\it Case 2:  $  \e^2 := \kappa + \eta  \le 1/M^2 $.} Note in this case $w\sim 1$. Then by \eqref{A18} we have
\be\label{c2}
 B  \sim 1,    \quad A(\lambda_+, z)  = 0
\ee
  where  the last equation can be checked  by direct computation   and we used $|z|^2<1-t<1$. There is a more intrinsic reason why the last equation for $A$ holds.
Notice that $\lambda_+$ is a point that the polynomial $P_{w, z} (m)|_{w = \lambda_+}$ has a double root. Therefore, we have  $0=P'_{w, z} (m_{\rm c} (\lambda_+, z) ) =
A(\lambda_+, z)$.

Notice that in the case  $\kappa + \eta  $  is   small enough, we can approximate $ A(w, z) $ by linearizing w.r.t. $w= \lambda_+$.
Thus by the defining equation $P'_{w, z} (m_{\rm c} (\lambda_+, z) ) =
A(\lambda_+, z)$, we have
 \begin{align}\label{B}
 A(w, z)  \sim   P_{w, z}'' (m_{\rm c} (\lambda_+, z) )
 (m_{\rm c} (w, z) - m_{\rm c} (\lambda_+, z))
+ \frac { \partial P_{w, z}} { \partial w}  (m_{\rm c} (\lambda_+, z) )  (w - \lambda_+)    \sim \sqrt {\kappa + \eta}= \e
\end{align}
where we have used that $P_{w, z}'' (m_{\rm c} (\lambda_+, z) ) = B(\lambda_+, z) \sim 1$,
$ \frac { \partial P_{w, z}} { \partial w}  (m_{\rm c} (\lambda_+, z) ) \sim 1$ and, by
 \eqref{A18},  that $ (m_{\rm c} (w, z) - m_{\rm c} (\lambda_+, z)) \sim \sqrt {\kappa + \eta}$.
While we can also check the conclusion of \eqref{B} by direction computation,  the current derivation provides a more intrinsic reason
why it is correct.

{\it Case 2a:}  Suppose \eqref{2a} is violated. We first
choose $M$ large enough so that $|m_{\rm c} (1 + m_{\rm c}) | \le  M^{1/4} $ in this regime.
Then by \eqref{c2} and \eqref{B}, with $w\sim 1 $,    we have  $$
  C  \delta  M^ {1/4}  \ge  |\Upsilon| = |u| | A(w,z)  +  B(w,z) u  + w u^2 | \ge  \frac{\delta M}{\e}
   \left [ C_1\e  -\frac{C_2 M\delta}{\e}
-  \frac { C_3  M^2  \delta^2} {\e^2  }  \right ] \ge C_1\delta  M /2,
$$
which is a contradiction  provided that $M$ is large enough. Here we have used that, by the restriction of $\e$ and $\delta$ in \eqref{2a} that $\e\ge  M^{3/4} \sqrt \delta$, $M$ is large enough constant   and $\delta\ll 1$.

{\it Case 2b:}  Suppose \eqref{2b} is violated.
 Similarly we have
 \begin{align*}
  C  \delta   M^{1/4}  \ge  |\Upsilon| &  =|u| |   B(w,z) u  +A(w,z)   + w u^2 |    \ge  |u|  \left [ C_1 M \sqrt  \delta   -C_2  \e -  C_3 M^2 \delta   \right ] \\ & \ge  C_1 |u|  \left [ M \sqrt  \delta/2   -C_2  \e   \right ]   \ge C_1 M^2  \delta/4
\end{align*}
which is a contradiction. Here we have used, by the restriction of $\e$ and $\delta$ in \eqref{2b} and $M$ is large enough constant, that
 $  C_2\e\le C_2 M^{3/4} \sqrt \delta \le  M \sqrt  \delta/20$.
\end{proof}

With a slighter strong condition on $\delta$ and an initial estimate $\Lambda\ll 1$ when
$\eta\sim 1$,  the first inequalities in \eqref{c1}-\eqref{2b}, i.e., \eqref{ag24},  always hold.
We state this as the following Corollary, which is a deterministic statement.

\begin{corollary}  [Deterministic continuity argument]\label{so51}
Suppose that  the assumptions of Lemma \ref{lem:fm} hold.  If  we have
$$
\Lambda(E+10\ii)\ll 1
$$
 and  that $\delta$ is decreasing in $\eta$ for $\e=\sqrt{\kappa+\eta}$  small enough,
 then \eqref{ag24} holds   all  $\eta \in   [\tilde \eta, 10]$.
\end{corollary}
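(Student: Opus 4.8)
**

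The plan is to run a bootstrap (continuity) argument in the variable $\eta$, starting from the scale $\eta\sim 1$ where everything is under control and decreasing $\eta$ down to $\tilde\eta$, using Lemma \ref{lem:fm} at each step to upgrade the crude bound $\Lambda\le\alpha|m_c|$ into the sharp bound \eqref{ag24}. The key structural point is that Lemma \ref{lem:fm} is a \emph{dichotomy}: under the hypotheses \eqref{s3} and \eqref{d1}, the quantity $\Lambda$ is always in the ``small set'' (the first inequality, i.e. \eqref{ag24}) or in the ``large set'' (the second inequality), with a gap between the two. One then observes that $\Lambda(E+\ii\eta)$ is continuous in $\eta$, so if at $\eta=10$ (or $E+10\ii$) we know $\Lambda$ lies in the small set, it cannot jump across the gap to the large set without passing through the forbidden region in between; hence it stays in the small set all the way down to $\tilde\eta$.

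First I would fix $E$ with $0\le E\le C$ and $M>M_0$ as in Lemma \ref{lem:fm}, and define the set $\cal G$ of $\eta\in[\tilde\eta,10]$ for which the \emph{small-set} alternative (equivalently \eqref{ag24}) holds. The initial input $\Lambda(E+10\ii)\ll 1$ together with the bound \eqref{d2} on $\delta$ places $\eta=10$ in $\cal G$ (here one checks the three cases of Lemma \ref{lem:fm}: for $\eta=10$ we are in Case 1 with $\e^2\ge 1/M^2$, and since $\Lambda\ll1$ and $1/(M^2|w^{1/2}|)$ is bounded below, the ``large'' alternative $\Lambda\ge 1/(M^2|w^{1/2}|)$ is excluded, so the small one holds). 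Then I would show $\cal G$ is open in $[\tilde\eta,10]$: this is immediate from continuity of $\Lambda$ and the strict inequality defining the small set, since the small-set bound \eqref{ag24} has a buffer away from the large-set threshold. The heart is showing $\cal G$ is closed: if $\eta_k\in\cal G$, $\eta_k\to\eta_*$, then by continuity $\Lambda(\eta_k)\to\Lambda(\eta_*)$, and $\Lambda(\eta_k)$ satisfies \eqref{ag24} at $\eta_k$; since $\delta$ is decreasing in $\eta$ for small $\e$ (this is exactly where that hypothesis of the corollary enters) and the right-hand side of \eqref{ag24} is continuous, we get $\Lambda(\eta_*)\le$ (the small-set bound at $\eta_*$), so $\eta_*\in\cal G$ provided we know the dichotomy of Lemma \ref{lem:fm} applies at $\eta_*$ — which it does, because \eqref{s3} and \eqref{d1} are assumed to hold for all $\eta\in[\tilde\eta,10]$ in the statement. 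Being nonempty, open and closed in the connected interval $[\tilde\eta,10]$, $\cal G=[\tilde\eta,10]$, which is the claim.

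The one subtlety — and the step I expect to be the main obstacle — is handling the transition between the three cases of Lemma \ref{lem:fm} as $\eta$ decreases, i.e. making sure the ``small set'' alternatives in \eqref{c1}, \eqref{2a}, \eqref{2b} glue continuously across the regime boundaries $\e^2\sim 1/M^2$ and $\delta\sim\e^2/M^{3/2}$, so that one cannot slip from the small branch of one case into the large branch of the next. Concretely one must verify that on the overlap $\e^2\approx 1/M^2$ the bound $\Lambda\le M^{3/2}\delta/|w|$ of Case 1 is compatible with (and weaker than, up to constants) the bound $\Lambda\le M\delta/\e$ of Case 2a, and similarly that on $\delta\approx\e^2/M^{3/2}$ the bounds $\Lambda\le M\delta/\e$ and $\Lambda\le M\sqrt\delta$ agree up to constants; in each case the ``large'' threshold of the neighboring regime must sit safely above the ``small'' bound one is carrying across, so that the continuity/no-jump argument is not broken at the seam. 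Once this bookkeeping is done, the unified statement \eqref{ag24} holds throughout $[\tilde\eta,10]$ and the proof is complete. I would end with \qed.
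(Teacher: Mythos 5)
Your proposal is correct and takes essentially the same approach as the paper: a dichotomy-plus-continuity (bootstrap in $\eta$) argument, initialized at $\eta=10$ in the small branch of Case 1, propagated by continuity of $\Lambda$, with the only real work being to check that at each regime boundary ($\e^2\sim 1/M^2$ and $\delta\sim \e^2/M^{3/2}$, and the possible direct jump from Case 1 to Case 2b) the small bound being carried in implies the small bound of the next regime, so one cannot slip across the gap; the paper performs precisely these three seam comparisons that you identify as the remaining bookkeeping, also using \eqref{d2} at the Case~1 $\to$ Case~2b seam and the monotonicity of $\delta$ to forbid re-entry into Case~2a.
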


\begin{proof}
By  assumption  $\Lambda(E+10 \ii)\ll1$ and
the left inequality of \eqref{c1} holds for $\eta = 10$.   By continuity of $\Lambda$, the same inequality,
$$ \Lambda  \le  \frac {M^{3/2} \delta} {  |w| },
$$
holds for $w=E+\ii\eta $ as long as
$\eta\in [\wt\eta, 10]$  and  $\e\geq 1/M$.

Suppose that as  $\eta$ decreases,
we get to Case 2a. Notice that when we decrease $\eta$,  by the conditions on $\e$
we will not go back to Case 1 from either Case 2a or Case 2b.
 For any $\e \le 1/M$ with $M$ large, we have
$$
  \frac {M^{3/2} \delta} {  |w| }   \le  \frac {M \delta} { {2\,\e } }.
$$
Hence at the transition point from Case 1  to  Case 2a,  the   inequality $  \Lambda(E+ i \eta)   \le \frac {M \delta} { {\e } } $  holds.
Thus by   continuity  of $\Lambda$,  the bound $ \Lambda(E+ i \eta)   \le  \frac {M \delta} { {\e } } $ in   \eqref{2b} holds until we leave Case   2a.

It is possible that we cross from Case 2a to Case 2b.
At the transition point, we have $ \delta = \frac { \e^2 } {M^{3/2}}$ and thus
$$
 \frac {M \delta} { {\e } }  \le \frac 1 2 M \sqrt \delta
$$
for $M$ large. Hence    the first inequality of Case 2b, i.e., $\Lambda \le M \sqrt \delta$    holds. By continuity, this bound continues to hold
 unless we leave Case 2b. Since $\delta$ is decreasing in $\eta$ when $\e$ is small,   once we get to Case 2b,  we will not go back to Case 2a (or Case 1 as explained before).

It is possible that the Case 2a is omitted and we get to Case 2b directly from Case 1. Notice that $\e=1/M$
at such a transition point and  we have  $|w| \sim 1$.
Furthermore,  by \eqref{d2},  we get $\delta \le 1/\log N  $  at the transition point.     Putting these together,  we have
for $M$ large,
$$
  \frac {M^{3/2} \delta} {  |w| }    \le \frac 1 2 M \sqrt \delta.
$$
Hence  the bound  $ \Lambda(E+ i \eta)   \le   M \sqrt \delta  $ in   \eqref{2b} holds.
\end{proof}

\subsection{The large $\eta$ case. }
Our method to estimate the Green functions and the Stieltjes transform is  to fix the energy $E$ and apply a continuity argument in $\eta$ by first showing that the crude bound in Lemma \ref{lem:fm}  holds for large $\eta$.
In order to start this scheme, we need to establish  estimates on the Green functions when $\eta=\OO(1)$. This is the main focus of this subsection.
We start with the following lemma which provide a  crude  bound on the Green functions.

\begin{lemma}
\label{lem:etaO1bd}
For    any $w\in {\rm S}(0)$ and $\eta >c>0$ for fixed $c$, we have the bound
\be\label{k430}
 \max_{i,j\notin U} |G^{(\mathbb{U},\mathbb{T})}_{ij}(w)| \leq C \;.
\ee
for some $C>0$. {  Notice that this bound is deterministic and is independent of the randomness. }
\end{lemma}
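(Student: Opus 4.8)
\textbf{Proof plan for Lemma \ref{lem:etaO1bd}.}

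The plan is to exploit the fact that for $\eta$ bounded below by a fixed constant, every Green function involved is a resolvent of a self-adjoint operator evaluated at a point whose distance to the real axis is $\ge c$, so all norms are controlled purely by operator-theoretic bounds with no probabilistic input. First I would note that $(Y^{(\bb U,\bb T)})^*Y^{(\bb U,\bb T)}$ is a positive semidefinite self-adjoint matrix, hence $G^{(\bb U,\bb T)}(w)=[(Y^{(\bb U,\bb T)})^*Y^{(\bb U,\bb T)}-w]^{-1}$ satisfies the standard resolvent bound $\|G^{(\bb U,\bb T)}(w)\|\le (\dist(w,\spt))^{-1}\le \eta^{-1}\le c^{-1}$, where $\|\cdot\|$ is the operator norm and we used $\im w=\eta>c$. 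The spectral theorem gives this immediately: writing the eigenvalues of the minor as $\lambda_k\ge 0$, one has $\|G^{(\bb U,\bb T)}(w)\|=\max_k|\lambda_k-w|^{-1}\le \eta^{-1}$, and likewise for $\mG$.

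The second step is to pass from the operator norm to the individual matrix entries. This is the elementary inequality $|G^{(\bb U,\bb T)}_{ij}(w)|\le \|G^{(\bb U,\bb T)}(w)\|$, valid for any matrix since $|A_{ij}|=|\langle e_i,Ae_j\rangle|\le\|A\|$ with $e_i,e_j$ unit coordinate vectors; for the indices $i$ or $j$ lying in the removed set the entry is defined to be zero by the convention in Definition \ref{definition of minor}, so the bound holds trivially there as well. Combining with the previous step yields $\max_{i,j\notin\bb U}|G^{(\bb U,\bb T)}_{ij}(w)|\le c^{-1}$, which is \eqref{k430} with $C=c^{-1}$. One should also remark that the condition $w\in\b{\rm S}(0)$ enters only to guarantee $\re w$ stays in a compact set and $\eta\le 10$, but neither is actually needed for the upper bound; the only hypothesis used is $\eta\ge c$.

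There is essentially no obstacle here: the statement is deterministic precisely because it is a worst-case resolvent estimate, and the role of this lemma in the paper is merely to supply the starting point ($\eta=\OO(1)$) for the continuity-in-$\eta$ argument described in the introduction of Section \ref{sec:PTs}. The only mild care needed is bookkeeping of the zero-padding convention for entries indexed in $\bb T$ or $\bb U$, and noting that since $|\bb U|,|\bb T|$ are bounded the same constant $C$ works uniformly over all such minors. If one wanted a cleaner constant one could observe $\eta\le 10$ is irrelevant and simply take $C$ depending on $c$ alone.
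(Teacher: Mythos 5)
Your proposal is correct and follows essentially the same approach as the paper: the paper also expands $G_{ij}$ in the spectral decomposition and uses $|\lambda_\al - w|\ge\eta$, which is the explicit-eigenbasis version of your operator-norm bound $|G_{ij}|\le\|G\|\le\eta^{-1}\le c^{-1}$. Your phrasing via $\|A\|\ge|\langle e_i,Ae_j\rangle|$ is arguably cleaner than the paper's (which informally bounds a sum that in fact requires a Cauchy--Schwarz step), but the underlying argument is the same.
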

\begin{proof} By definition, we have
$$
\left|G_{ij}\right|=\left|\sum_{\al}\frac{\bu_\al(i)\overline\bu_ \al(j)}{\lambda_\al-w}\right|\leq \frac{1}{\eta}\sum_{\al} \bu_\al(i)\overline\bu_ \al(j) \leq\frac{1}{\eta}\leq C
$$
where we have used $\abs{\lambda_\al-w}\geq \im w=\eta $. Furthermore, $G^{(\mathbb{U},\mathbb{T})}_{ij}$ can be bounded similarly.
 \end{proof}

The main result of this subsection is the following bound on $\Lambda$.

\begin{lemma} \label{lem:selfcon1}   For any $\zeta>0$  and $\e>0$, we have
\be\label{gpww4}
\max_{w \in{ \b {\rm S}}(0), \eta= 10 }\Lambda(w)\leq  N^{-1/2+\e}
\ee
\hp{\zeta}.

\end{lemma}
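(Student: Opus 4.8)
\textbf{Proof proposal for Lemma \ref{lem:selfcon1}.}

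The plan is to establish \eqref{gpww4} at the single scale $\eta=10$ by a direct fixed-point / concentration argument, exploiting the fact that at $\eta=10$ every denominator appearing in the self-consistent analysis is of order one, so no instability can occur. First I would note that for $w=E+10\ii$ with $0\le E\le 5\lambda_+$, Lemma \ref{lem:etaO1bd} gives the deterministic a priori bound $\max_{ij}|G_{ij}|\le C$ (and similarly for all minors), so in particular $|m|\le C$ and $|m_G^{(i,i)}-m|\le C/(N\eta)\le C$ is automatic; moreover $|m_{\rm c}|\sim|w|^{-1/2}\sim 1$ by Proposition \ref{tnf} (or \eqref{dA2}), and $\im m_{\rm c}\sim 1$ by Case 1/Case 4 of Lemma \ref{pmcc1}. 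Thus the hypothesis \eqref{s3} required to invoke the self-consistent equation is not yet known, but it is only needed in the form $\Lambda\le\alpha|m_{\rm c}|$, and since $|m_{\rm c}|\sim 1$ while both $m$ and $m_{\rm c}$ are $\OO(1)$, the relevant quantities stay in a compact region of the complex plane where all the estimates of Section \ref{sec:pro} hold with constants of order one.

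Next I would run the large deviation / concentration input: by the subexponential decay \eqref{subexp} and the standard large deviation bounds for quadratic forms $\b y_i^* M \b y_i$ (the bound referred to as \eqref{130} in the excerpt), one gets that the error terms $Z_i^{(\mathbb T)}$ and $\mathcal Z_i^{(\mathbb T)}$ satisfy $|Z_i^{(\mathbb T)}|+|\mathcal Z_i^{(\mathbb T)}|\prec (N\eta)^{-1/2}\le N^{-1/2}$ with $\zeta$-high probability, using only the deterministic bound $\max_{ij}|G_{ij}|\le C$ and its minor versions from Lemma \ref{lem:etaO1bd}. Feeding these into the identities \eqref{110}, \eqref{110c1}, \eqref{5.888} of Lemma \ref{idm} exactly as in the proof of Lemma \ref{cor:self} — but now with the bounded-denominator estimates coming for free from $\eta=10$ rather than from an induction hypothesis — yields the self-consistent equation
$$
w\,m(1+m)^2 - m|z|^2 + 1 + m = \Upsilon, \qquad \Upsilon = \OO_\prec(N^{-1/2}).
$$
Equivalently $\mathcal D(m)m(1+m)=\Upsilon$ with $|\Upsilon|\le N^{-1/2+\e/2}$ with $\zeta$-high probability.

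Finally I would convert this into the bound on $\Lambda=|m-m_{\rm c}|$ using the stability analysis. Since $w\sim 1$ and $\e^2=\kappa+\eta\ge\eta=10\ge 1/M^2$, we are in Case 1 of Lemma \ref{lem:fm}: with $\delta=N^{-1/2+\e/2}$ (which indeed satisfies \eqref{d2} for large $N$), the dichotomy \eqref{c1} gives either $\Lambda\le M^{3/2}\delta/|w|\le N^{-1/2+\e}$ or $\Lambda\ge (M^2|w^{1/2}|)^{-1}\sim 1$. To rule out the second alternative I would use the deterministic bound $\Lambda\le|m|+|m_{\rm c}|\le C$ together with a crude direct estimate showing $\Lambda$ is actually small: either perturb off the Gaussian/deterministic case, or more simply observe that at $\eta=10$ the resolvent $m(w)=N^{-1}\tr(Y^*Y-w)^{-1}$ concentrates around its mean by a martingale/McDiarmid argument (each entry of $X$ changes $m$ by $\OO(1/(N\eta))$), and the mean converges to $m_{\rm c}$ by the standard moment/Stieltjes computation for sample covariance type matrices; this forces $\Lambda\ll 1$, excluding the large branch. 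The main obstacle is this last point — cleanly ruling out the "large $\Lambda$" solution of the cubic, i.e.\ verifying that we are on the correct branch of $P_{w,z}$ — but at $\eta=10$ it is genuinely soft because one has an independent a priori estimate $\Lambda\ll 1$ from concentration of $m$, after which the quantitative bound $\Lambda\le N^{-1/2+\e}$ follows from Case 1 of Lemma \ref{lem:fm}. Taking a union bound over an $N^{-C}$-net in $E$ and using $\|\partial_E G\|\le \eta^{-2}=\OO(1)$ to control the discretization error upgrades the estimate to hold simultaneously for all $w\in\b{\rm S}(0)$ with $\eta=10$, completing the proof.
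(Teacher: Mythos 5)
There is a genuine gap: you claim that the ``bounded-denominator estimates come for free from $\eta=10$,'' but this is not true. Lemma~\ref{lem:etaO1bd} gives only the \emph{upper} bound $|G^{(\mathbb U,\mathbb T)}_{ij}|\le C$, hence $|m|\le C$; it gives no lower bound on $|1+m|$. Tracing through the identities of Lemma~\ref{idm}, to pass from $[\mG_{ii}^{(i,\emptyset)}]^{-1} = -w[1 + m + \OO_\prec(N^{-1/2})]$ to $\mG_{ii}^{(i,\emptyset)} = -1/[w(1+m)] + \text{error}$, and likewise to control the error when inverting the bracket in \eqref{110}, you must rule out $1+m$ being small. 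The analogue of \eqref{81}, namely $|w|^{-1}|1+m|^{-2}\le C$, is exactly what is used in the proof of Lemma~\ref{cor:self} to make these inversions, and there it comes from the hypothesis \eqref{s3} plus $|1+m_{\rm c}|\sim|m_{\rm c}|$ from \eqref{dA2}. At $\eta=10$ you have not yet established anything that replaces \eqref{s3}, so your derivation of the self-consistent equation $\mathcal D(m)m(1+m)=\Upsilon$ with $|\Upsilon|\prec N^{-1/2}$ is not justified. The paper addresses precisely this obstacle with an elementary but not obvious step: it first proves the pointwise inequality $|\re m|\le 2\sqrt{\im m/\eta}$, then argues by contradiction (assuming $\im m\le c(\log N)^{-1}$ and deriving $|m|\sim 1$ from the approximate fixed-point relation) to conclude $\im m \ge c(\log N)^{-1}$, which gives $|1+m|\ge c(\log N)^{-1}$. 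Only then can the equation be solved and compared against the three explicit roots of the cubic $P_{w,z}$.

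Your proposed remedy --- a McDiarmid-type concentration of $m$ about its mean, plus the claim that $\E m \to m_{\rm c}$ --- is a reasonable alternative idea, but as written it is placed \emph{after} the self-consistent equation and the invocation of Lemma~\ref{lem:fm}, which is circular: Lemma~\ref{lem:fm} has \eqref{s3} as a hypothesis, and \eqref{s3} is exactly what you are trying to establish. To make this route rigorous you would need to (a) move the concentration argument to be the \emph{first} step, before deriving the self-consistent equation; and (b) actually prove $\E m(E+10\ii)\to m_{\rm c}(E+10\ii)$ uniformly in $E$, which is a nontrivial external input (it is essentially a qualitative Marchenko--Pastur-type statement for $Y_z^* Y_z$ that the paper is careful \emph{not} to assume, proving everything from the self-consistent structure instead). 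The paper's $\im m$ argument is self-contained and avoids this import, which is the main thing your approach does not buy you.
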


\begin{proof}
  From \eqref{110c1}-\eqref{44},  for  $\eta = \OO(1)$  we have
$$
\left[\mG^{(i, \emptyset)} _{ii} \right]^{-1}
  =   - w\left[1+  m_ G^{( i ,i)}+Z^{(i)}_{i }   \right], \quad
|m_ G^{( i ,i)}- m| \le \frac { C} { N }\, .
$$
From \eqref{k430}, we have $|G_{ij}|+|\mG _{ij}|\leq \eta^{-1}\le \OO(1)  $   and
$|m_G^{(i,i)}|\le \OO(1) $. Hence the large deviation estimate \eqref{44} becomes,
\hp{\zeta},
\be\label{444}
| Z^{(i)}_{i }| \le  \varphi^{C_\zeta} \sqrt{\frac{\im m_G^{(i,i)}}{N}}
\le \varphi^{C_\zeta} N^{-1/2}.
 \ee
Thus  for any $\e>0$ we have
$$
\mG_{ii}^{(i,\emptyset)}:=- \frac 1 { w(1+  m  + \OO( N^{-1/2 + \e}) ) }
 $$
Together with  \eqref{110}, we obtain
$$
 G^{-1}_{ii}
  =   - w- wm_\mG^{(i, \emptyset)}+\frac{|z|^2}{1+  m  + \OO( N^{-1/2 + \e}) }-w   \mathcal Z_{i }.
$$
By an  argument similar to the one used in \eqref{444}, we can estimate $ \mathcal Z_{i }$
by
 $$
|\mathcal Z_i|\leq N^{-1/2+\e}
$$
for any  $\e>0$ \hp{\zeta}.  This implies that, \hp{\zeta},
 \be\label{110t}
 G^{-1}_{ii}
  =   - w- wm   +\frac{|z|^2}{1+m  +\OO (N^{-1/2+\e})}+  \OO (wN^{-1/2+\e}).
\ee
For any $\eta$ fixed, we claim that the following inequality between the real and imaginary parts of $m$ holds:
\be\label{xy}
|\re m| \le {2}\sqrt {\frac {\im m} \eta}.
\ee
To prove this,  we note that for any $\ell \ge 1$
\begin{align*}
N^{-1} \sum_{ |\lambda_j-E| \ge \ell \eta } \frac { E - \lambda_j} { (E- \lambda_j)^2 + \eta^2 }  & \le \frac 1 { \ell  \eta},\\
N^{-1} \sum_{ |\lambda_j-E| \le \ell \eta } \frac {| E - \lambda_j |} { (E- \lambda_j)^2 + \eta^2 }  & \le N^{-1} \sum_{ |\lambda_j-E| \le \ell \eta } \frac { \ell \eta } { (E- \lambda_j)^2 + \eta^2 }
\le  \ell \im m.
\end{align*}
Summing up these two inequalities and optimizing $\ell$, we have proved \eqref{xy}.

Assume that  $\im m \le c (\log N)^{-1}$. From \eqref{xy}, we have  $|m| \le  c (\log N)^{-1/2}$.
Together with $\im w = \eta \sim 1$,
 $$
|m| = N^{-1} \left | \sum_i  G _{ii} \right |
 = N^{-1} \left | \sum_i \left(- w- wm   + \frac{|z|^2}{1+m  } \right)^{-1} \right | +\OO (N^{-1/2+\e})\ge {  (-w+|z|^2+o(1))^{-1}}  \ge C
$$
for some constant $C$. This contradicts $|m| \le  c (\log N)^{-1/2}$ and we can thus assume that
$\im m \ge c (\log N)^{-1}$
when  $\eta \sim 1$ and $w=\OO(1)$. In this case, we also have
$$
|1+m|\geq C  (\log N)^{-1}.
$$
Then \eqref{110t} implies { for any $\e>0$} that \hp{\zeta}
$$
 G _{ii}
  =   \left(- w- wm   + \frac{|z|^2}{1+m  } \right)^{-1}+\OO (N^{-1/2+\e})
$$
Summing  up all $i$, we have the following equation for $m$ \hp{\zeta}:
$$
m
  =    \frac{-1-m  }{w(1+m)^2-|z|^2}  +\OO (N^{-1/2+\e})\, .
$$
We can rewrite this equation into the following form:
\be\label{4jj}
 P_{w, z} (m)=   w(1+m)^2m -|z^2|m+m+1=\OO (N^{-1/2+\e})\, .
\ee

It can be checked  (with computer calculation or rather complicated but elementary  algebraic calculation) that for $0 \leq E \leq 5\lambda_+$ and $\eta = O(1)$, the third order polynomial
$ P_{w, z} (m)$ has no double root and  there is only one root  with positive real part. We denote this root by  $m_1$ and the other two roots by  $m_2$ and $m_3$.  For $0 \leq E \leq 5\lambda_+$ and $t \le \eta \le t^{-1}$ for any $t$ fixed,  the three roots are separate by
order one due to compactness. Since there is no double root, we have $ |P'_{w, z} (m_1) | \ge c > 0$ whenever  $0 \leq E \leq 5\lambda_+$ and $t \le \eta \le t^{-1}$. Thus the stability of \eqref{4jj} is trivial and we have proved that in this range of parameters
$$
\left| m(w, z)- m_1(w, z)\right|=\OO (N^{-1/2+\e})
$$
for any $\e>0$  \hp{\zeta}.
\end{proof}

\subsection {Proof of the weak  local Green function estimates. }
  In this subsection, we finish the proof of  Theorem \ref{wempl}. We  fix an energy $E$  and we will decrease
the imaginary part $\eta$ of $w= E + i \eta$.   Recall all stability results are based on assumption \eqref{s3}, i.e.,   $ \Lambda \le \alpha   |m_c| \sim \alpha |w|^{-1/2}$ for some small constant $\alpha$,    which so far was established only for large $\eta$ in \eqref{gpww4}.  We would like to know that this condition continue to hold for smaller $\eta$.  More precisely,
 suppose that  \eqref{s3} holds  in a set $A$ for all $w=E+\eta i$ with   $\eta\in [\wt\eta, 10]$ where $\wt\eta$ satisfies
\be\label{eta1}
 \wt\eta \geq  \varphi^b N^{-1}|w|^{1/2}, \quad b >  5 Q_\zeta.
 \ee
 We can choose $ \tilde \eta =  \eta_1 < \eta_2 \ldots < \eta_n = 10$ such that $ |\eta_{i+1} - \eta_i | \le  N^{-20}$ and $n = O(N^{20})$.
By  \eqref{110-31g} and \eqref{gpww4}   we have   \hp{\zeta} in $A$,
\be\label{g1}
  \Upsilon (w)  \le  \OO(\varphi^{Q_\zeta}\Psi)(w)  \le   \varphi^{Q_\zeta} \sqrt {  \frac {|w|^{-1/2}} { N \eta}  }
\ee
for all $w = E + i \eta_j$ for all $1 \le j \le n$.
Since $\Lambda ( E + i \eta) $ is continuous in $\eta$  at a  scale, say,  $N^{-10}$,
\eqref{g1} holds for all  $\eta\in [\wt\eta, 10]$   \hp{\zeta} in $A$.
 Hence for $\wt \eta$ satisfying \eqref{eta1} the estimate   \eqref{d1}  holds with
$$
\delta=C\varphi^{Q_\zeta}  |w|\left(\frac{|w|^{-1/2} }{N\eta}\right)^{1/2}
$$
With this choice, we can check that the assumption on
$\delta$,   \eqref{d2},  holds as well. Furthermore $\delta$ is decreasing in $\eta$ when  $\e=\sqrt{\kappa+\eta}$ is small enough.
 By Corollary \ref{so51},   \eqref{ag24} holds   all  $\eta\in [\wt\eta, 10]$.

For $|z| < 1-t$ for some $ t >0$, if $\kappa \ll 1$ then
$|w| \sim 1$ and \eqref{ag24} implies
$$
\Lambda   \leq C \sqrt { \delta(w)}  \le  \varphi^{Q_\zeta /2} \left(\frac{1 }{N\eta}\right)^{1/4}.
$$
If $\kappa \ge c > 0$ for some $c> 0$ then
\be \label{xmb}
\Lambda   \leq C  \delta(w) |w|^{-1}  \le C  \varphi^{Q_\zeta }\left(\frac{|w|^{-1/2} }{N\eta}\right)^{1/2} \le
C \varphi^{ Q_\zeta }
 \frac {1}{|  w^{1/2}|}
\left(
\frac{|  w|^{ 1/2}}{N\eta}
\right)^{1/4} .
\ee
Combining both cases,  for any  $w\in \b {\rm S}(b)$, $ b >  5 Q_\zeta$,   we have  \hp{\zeta} in $A$ that
\be\label{411}
 \Lambda
 \le  \varphi^{ Q_\zeta}
 \frac {1}{|  w^{1/2}|}
\left(
\frac{|  w|^{ 1/2}}{N\eta}
\right)^{1/4} \le   C  \varphi^{ -Q_\zeta/5}  |  w|^{ - 1/2}  \sim  C  \varphi^{ -Q_\zeta/5} |m_{\rm c}|.
\ee

Suppose that $\hat  \eta := \wt\eta - N^{-20} \in \b {\rm S} (b)$ for some $b >  5 Q_\zeta$.
Then for any  $  \eta  \in [\wt\eta - N^{-20}, \wt \eta]$, by \eqref{411} and the continuity of $\Lambda$, we have
$$
\Lambda(E + i  \eta) \le \Lambda(E + i \wt \eta)  + N^{-10} \le  C  \varphi^{ -Q_\zeta/5}  |  w|^{ - 1/2} + N^{-10} \le \alpha |m_c(E + i \hat \eta)| /2
$$
Thus  the condition \eqref{s3}  in Lemma \ref{cor:self} is satisfied   \hp{\zeta}  in $A$.  Since we can start this procedure with $\wt \eta = 10$
and there are only $N^C$ steps to get to $\wt \eta =  \varphi^{5 Q_\zeta} N^{-1}|w|^{1/2}$, we have proved that \eqref{411} holds for all
$w \in \b {\rm S} (b)$ with  $b >  5 Q_\zeta$. Notice that from now on  the assumption \eqref{s3} holds \hp{\zeta}.

We can now  prove the estimate \eqref{res:wempl} on the diagonal term.    Comparing  \eqref{110-315} with \eqref{77}($\bb T=\bb U=\emptyset$),  for any  $w\in \b {\rm S}(b)$, $ b >  5 Q_\zeta$,  we have \hp{\zeta}
 \be\label{xmb2}
  |G_{ii}-m|\leq  O(\varphi^{  Q_\zeta }\Psi)
\ee
   By definition of $\Psi$, \eqref{411} and $m_c\sim|w^{-1/2}| $,
 we have
$$
  \Psi =   \left(\sqrt{\frac{\im m_C+\Lambda}{N\eta}}+\frac{1}{N\eta}\right)
\le    \left(\sqrt{\frac{|w|^{-1/2} }{N\eta}}+\frac{1}{N\eta}\right).
 $$
Using  the restriction on $\eta$ so that $N \eta \ge  |w|^{1/2} \varphi^{ 5 Q_\zeta}$, we have
\be\label{110-330}
\Psi
\le C \sqrt{\frac{ |w|^{-1/2} }{N\eta}}
\le C |w|^{-1/2} \left( \frac{\sqrt w}{N\eta}\right)^{1/4}.
\ee
With \eqref{xmb} and \eqref{xmb2}, we have thus proved that
$$
 \max_i  \big | G_{ii}- m_C  \big | \le  \varphi^{Q_\zeta}|w^{-1/2}|\left( \frac{\sqrt w}{N\eta}\right)^{1/4}
$$
for any   $w\in \b {\rm S}(b)$, $ b >  5 Q_\zeta$.
Hence the estimate \eqref{res:wempl} on the diagonal element  $G_{ii}$ holds.

To conclude  Theorem  \ref{wempl}, it remains to prove the estimate on the off-diagonal elements. Recall the identity  \eqref{110b} for $G_{ij}$ and the equations   \eqref{1321} and \eqref{1328}. We can estimate the off-diagonal
 Green function by
 $$ \Big | G_{ij} \Big |
  =  \Big | w         G_{ii}  G^{(i,\emptyset)}_{jj}
  |z|^2 \mG^{(ij, \emptyset )}_{ij} \Big |  +   \OO\left( \varphi^{Q_\zeta}
  \sqrt{\frac{\im m_\mG^{(ij,\emptyset)}
  +|z|^2\im\mG^{(ij, \emptyset)}_{ii}+|z|^2 \im\mG^{(ij, \emptyset)}_{jj}}{N\eta}}  \right)   , \quad i\neq j,
$$
\be\label{110b1}
 \Big | G_{ij} \Big |
  =  \Big |
  |z|^2 \mG^{(ij, \emptyset )}_{ij} \Big |  +   \OO\left( \varphi^{Q_\zeta}
  \Psi \right) , \quad i\neq j.
\ee
 Here we have used   $|G_{ii}  G^{(i,\emptyset)}_{jj}|=O(|w|^{-1})$, which follows from \eqref{2gsh}, $\Lambda\ll m_c$ and $|m_c|\sim |w^{-1/2}|$

 Recall the  identity   \eqref{110d} that
$$
 {\mG_{ij}^{(ij, \emptyset)}}
  =   -w\mG_{ii}^{(ij, \emptyset)}\mG^{(ij, i)}_{jj}\left( \mathrm y_i^{(ij)} G^{(ij,ij)}   \mathrm y_j^{(ij)*}\right), \quad i\neq j.
$$
By \eqref{132}, we have
$$
\left|
\left( \mathrm y_i^{(ij)} G^{(ij,ij)} ,  \mathrm y_j^{(ij)*}\right)
 \right|\leq
  \varphi^{Q_\zeta}
  \sqrt{\frac{|\im m_ G^{(ij,ij)}| }{N\eta}}\, .
$$
where we have used  \eqref{1328} and  that, by definition, $\im G^{(ij,ij)}_{ii}= 0= \im G^{(ij,ij)}_{jj} $.
Therefore, we have \hp{\zeta},
\be\label{110d2}
 \Big | {\mG_{ij}^{(ij, \emptyset)}} \Big |
\le \varphi^{Q_\zeta}
  \sqrt{\frac{ \im m_C+\Lambda+(N\eta)^{-1} }{N\eta}}\leq \varphi^{Q_\zeta} \Psi, \quad i\neq j,
\ee
where we also used $|\mG_{ii}^{(ij, \emptyset)}\mG^{(ij, i)}_{jj} |\leq C|m_c|^2\leq C |w|^{-1}$. Together with \eqref{110b1} and \eqref{2gsh}, we have  proved that \hp{\zeta}
\be\label{110b2}
 \Big | G_{ij}  \Big |
   \leq \varphi^{Q_\zeta}\Psi   , \quad i\neq j\, .
\ee
With \eqref{110-330}, it   proves    Theorem  \ref{wempl} for the off-diagonal elements
 provided that  $w\in \b {\rm S}(b)$ with  $ b >  5 Q_\zeta$. Finally, we rename $b$ as the  $C_\zeta$ and this concludes
the proof of Theorem  \ref{wempl}.

\section{Proof of the strong local Green function estimates}\label{sec: PS}

Lemma \ref{cor:self}  provides an error estimate to  the self-consistent equation of $m$
 linearly  in $\Psi$.
 The following Lemma improves this estimate  to quadratic in $\Psi$.
This  is  the key improvement leading to a proof  of the
strong  local Green function estimates, i.e., Theorem \ref{sempl}.

\begin{lemma} \label{lem:Zlem}
 For any $\zeta>1$,  there exists $R_\zeta > 0 $ such that the following statement holds. Suppose
 for  some deterministic number $\wt \Lambda(w, z)$  (which can depend on $\zeta$)  we have
$$
  \Lambda(w, z) \leq \wt \Lambda(w, z)  \ll m_c (w, z)
$$
for  $ w \in{ \b {\rm S}}( b)$, $ b > 5 R_\zeta$,  in a set
$\Xi$
with  $\P(\Xi^c) \leq e^{-p_{N}(\log N)^2 }$ and $p_N$ satisfies that
\be\label{kk20}
\varphi { \le} p_N{ \le} \varphi^{  2\zeta}.
\ee
 Then there exists  a  set $\Xi'$   such that  $ \P(\Xi'^c) \leq e^{-p_{N} } $  and
  \be\label{D521}
\cal D (m(w,z))\leq  \frac{1}2 \varphi^{R_\zeta} |m_{\rm c}|^{-3} \wt \Psi  ^2 , \quad
   \widetilde \Psi
   :=\sqrt{
   \frac{\im \,m_{\rm c}+\wt \Lambda}{N\eta}
   }
   +\frac{1}{N\eta},\quad  {\rm in}\quad \Xi'.
\ee
Notice that the probability deteriorates in the exponent by a $(\log N)^{-2}$ factor.
 \end{lemma}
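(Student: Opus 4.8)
The plan is to carry out the fluctuation-averaging mechanism (cf.\ \cite{ErdYauYin2010Adv}): although each individual fluctuation term $Z_i^{(i)}$ and $\cal Z_i$ is only of size $\widetilde\Psi$ by the large deviation bounds such as \eqref{44}, their empirical averages $[Z]$ and $[\cal Z]$ are of size $\widetilde\Psi^2$ up to a power of $\varphi$, and it is exactly these averages that control the error $\cal D(m)$ in the self-consistent equation. So the proof splits into a deterministic bookkeeping step that reduces $\cal D(m)$ to $[Z]$, $[\cal Z]$ plus terms that are manifestly $\OO(\widetilde\Psi^2)$, and the genuinely stochastic estimate $|[Z]|+|[\cal Z]|\prec\widetilde\Psi^2$ with the required, slightly deteriorated, probability.

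For the first part I would redo the computation of Lemma \ref{cor:self} keeping one more order. Setting $B_0:=1+m-|z|^2/(w(1+m))$, inverting \eqref{110-29} and averaging over $i$ gives a schematic identity
\[
m=-\frac{1}{wB_0}+\frac{1}{wB_0^{2}}[r]+\OO\left(\frac{[r^2]}{|w|\,|B_0|^{3}}\right),\qquad r_i:=(m_\mG^{(i,\emptyset)}-m)+\cal E_1+\cal Z_i,
\]
which, after multiplying by $wm(1+m)B_0$ and using \eqref{defmc1}, \eqref{dA2}, \eqref{82}, becomes $\cal D(m)\,m(1+m)=\Upsilon$ with $\Upsilon$ expressed explicitly through $[Z]$, $[\cal Z]$, $[m_G^{(i,i)}-m]$ and quadratic remainders. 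The non-fluctuating pieces are estimated directly: by \eqref{111}, $m_G^{(i,i)}-m=\OO\big((N\eta)^{-1}\im G_{ii}/|G_{ii}|\big)$, which by Theorem \ref{wempl}, \eqref{411} and $|m_{\rm c}|\sim|w|^{-1/2}$ is $\OO(|m_{\rm c}|^{-1}\widetilde\Psi^2)$; the quadratic remainders $[r^2]$, $|\cal E_1|^2$, $|Z_i^{(i)}|^2$ are $\OO(\varphi^{C}\widetilde\Psi^2)$ using the weak estimate of Theorem \ref{wempl} and the hypothesis $\Lambda\le\widetilde\Lambda$. Collecting powers of $m_{\rm c}$ (using $|m_{\rm c}|\sim|1+m_{\rm c}|\sim|w|^{-1/2}$ and the edge estimate \eqref{ny27} to control $B_0^{-1}$ and the linearized coefficient $A=P_{w,z}'(m_{\rm c})$) then turns $|[Z]|+|[\cal Z]|\prec\widetilde\Psi^2$ into the claimed $\cal D(m)\le\tfrac12\varphi^{R_\zeta}|m_{\rm c}|^{-3}\widetilde\Psi^2$, uniformly in $w\in\b {\rm S}(b)$.

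The core is the fluctuation-averaging bound itself, which I would obtain from a high-moment estimate: expand $\E\big[\1_\Xi|[Z]|^{2p}\big]$ over index tuples $(i_1,\dots,i_{2p})$, use $\E_{{\mathrm y}_{i_a}}Z_{i_a}^{(i_a)}=0$, and quantify via the minor expansions \eqref{111}, \eqref{Gik} how weakly $Z_{i_a}^{(i_a)}$ depends on any other row ${\mathrm y}_{i_b}$; each index occurring an odd number of times in the resolvent part of the remaining factors yields an extra factor $\widetilde\Psi$, so that $\E\big[\1_\Xi|[Z]|^{2p}\big]\le(Cp)^{Cp}\big(\varphi^{C}\widetilde\Psi^2\big)^{2p}$, and likewise for $[\cal Z]$. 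Taking $2p$ of order $p_N$ and applying Markov's inequality produces a set $\Xi'\subset\Xi$ with $\P(\Xi\setminus\Xi')\le e^{-p_N}$ on which $|[Z]|+|[\cal Z]|\le\varphi^{C}\widetilde\Psi^2$; the factor $(\log N)^2$ lost in the exponent of the hypothesis is precisely what is needed to absorb the combinatorial factor $(Cp)^{Cp}$ together with the discrepancy between $\varphi$ and $\log N$ in Definition \ref{def:hp}.

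The main obstacle is this high-moment computation: controlling the combinatorics of the expansion and verifying that the gain per unmatched index is genuinely a full factor $\widetilde\Psi$ (rather than a fractional power or an extra $\varphi$), while matching the resulting moment bound to the target probability $e^{-p_N}$ using only the allotted $(\log N)^2$ room in the exponent. A secondary technical point is uniformity in $w\in\b {\rm S}(b)$ and in $z$ with $\tau\le||z|-1|\le\tau^{-1}$, for which one relies on the quantitative control of $m_{\rm c}$, $B_0$ and $A$ from Section \ref{sec:pro}.
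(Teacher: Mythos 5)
Your proposal follows essentially the same two-step architecture as the paper: first a second-order self-consistent equation (the paper's Lemma \ref{cor:Dm}) that reduces $\cal D(m)$ to the averaged fluctuations $[\cal Z]$, $[Z_\ast^\ast]$ plus explicit $\OO(\varphi^{C}|m_{\rm c}|^{-3}\wt\Psi^2)$ errors (using \eqref{531tt}, \eqref{452}, \eqref{456} exactly as you describe), and then a fluctuation-averaging estimate $|[\cal Z]|+|[Z_\ast^\ast]|\le\varphi^{C}|w|^{1/2}\wt\Psi^2$ on a slightly smaller set (the paper's Lemma \ref{lem:32y}). The one difference in implementation is that the paper does not run the $2p$-th moment combinatorics inline as you propose, but instead constructs an explicit decomposition of $\cal Q_A(wG_{ii})^{-1}$ via the Schur complement identity \eqref{eqn:G11fin} and the $\al(I+R)$ Neumann expansion \eqref{Z5}--\eqref{R3}, and then invokes the abstract decoupling Lemma \ref{abstractZlemma} (from \cite{PilYin2011}), which is precisely a packaged version of the moment computation including the careful treatment of the $\1(\Xi)$ cutoff via the $\Xi^*$ device and the $(\log N)^{2}$ exponent loss that you correctly flag as the delicate point.
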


We remark  that,  by Lemma \ref{pmcc1},  $\im m_{\rm c} \ll |m_{\rm c}|$ when $\eta + \kappa \ll 1$.  Hence we have to track the dependence of
$\im m_{\rm c}$ carefully in the previous Lemma. This is one major difference between the weak and strong local Green function estimates.    Similar phenomena occur for the Stieltjes transforms of the  eigenvalue distributions of
 Wigner matrices.  Lemma \ref{lem:Zlem} will be proved later in this section; we now use it to prove Theorem \ref{sempl}.
We first give a {\it heuristic} argument.

Suppose that  we have the estimate \eqref{D521} with $\wt \Psi$ replaced by $\Psi$.
We assume $\Lambda\ge (N\eta)^{-1} $ for convenience so that $\Psi^2 \sim (\im m_{\rm c}+\Lambda)/(N\eta) $
(If this assumption is violated then   then \eqref{res:sempl} holds automatically and we have nothing to prove).
Then we can apply  Corollary \ref{so51} by choosing
\be\label{deltaxiao}
 \delta = \varphi^{R_\zeta}|w|^{3/2}  \left [ \frac{\im m_{\rm c}+\Lambda}{N\eta}    \right ]
\ee
 which implies \eqref{ag24}.  Consider first the case $\kappa + \eta \sim \OO(1)$.   Using \eqref{ag24} with the choice of $\delta$ in
\eqref{deltaxiao} and  $\kappa+\eta+\delta \ge \OO(1)$,  we have
$$
 \Lambda
 \le   \varphi^{R_\zeta} |w|^{1/2}  \left [ \frac{\im m_{\rm c}+\Lambda}{N\eta}   \right ].
$$
When $\eta$ satisfies the condition \eqref{eta1},
the coefficient of  $\Lambda$  on the right side of the last equation  is smaller than $1/2$.
 Hence, using
 $\im m_{\rm c}\leq |m_{\rm c}|\leq    C |w|^{-1/2}$ (see Proposition \ref{tnf}), we have
$$   \Lambda   \le C  \varphi^{R_\zeta }\left [ \frac{ |w|^{ 1/2}  \im m_{\rm c}}{N\eta}    \right ]  \le  C\varphi^{R_\zeta } \frac{1}{ N\eta }\, .
$$
 We now consider the case $\kappa + \eta \ll 1$ and thus $|w| \sim \OO(1)$.
 From  the first inequality of \eqref{ag24}, we have
\be\label{y1}
\Lambda    \leq C \frac{ \delta(w)|w|^{-1}}{ \sqrt{\kappa+\eta+\delta (w)}} \leq C  \sqrt {\delta(w) }.
 \ee
Also, in the regime $\kappa + \eta \ll 1$,   \eqref{esmallfake} asserts that
 $$
\im m_{\rm c} \le
C \sqrt{\kappa+\eta} ,  \quad \frac{  \im m_{\rm c} }{  N \eta \sqrt{\kappa+\eta+ \delta }}
\le  \frac{ C   }{  N \eta }\, .
$$
 Using  the choice of $\delta$ in
\eqref{deltaxiao},  we have
$$
  \Lambda     \leq C \varphi^{R_\zeta}  |w|^{1/2} \frac{ \im m_{\rm c}+\Lambda }{ N \eta  \sqrt{\kappa+\eta+\delta}}
 \leq  C\varphi^{R_\zeta} \frac{1 }{ N \eta  }  + C \varphi^{R_\zeta}  \frac{  \Lambda }{ N \eta  \sqrt{\kappa+\eta+\delta}}
\le C' \varphi^{R_\zeta} \frac{1 }{ N \eta  }
  $$
where we have used  \eqref{y1} to  absorb the last term involving $\Lambda$ in the last inequality with a change of constant $C$.
This completes the heuristic proof of
Theorem \ref{sempl}. We now give a formal proof of this theorem assuming Lemma \ref{lem:Zlem}.

\noindent

\begin{proof}[Proof of Theorem \ref{sempl}]
 We first prove  \eqref{Lambdaofinal} assuming
 \eqref{res:sempl}.  By
\eqref{110b2} and the definition of $\Psi$, we have for $i\neq j$,
 $$
 \Big | G_{ij}  \Big |
  \leq \varphi^{{ R_\zeta}} \left [  \sqrt{
   \frac{\im \,m_{\rm c}+ \Lambda}{N\eta}
   }  +\frac{1}{N\eta} \right ] \le \varphi^{{ R_\zeta}} \left [  \sqrt{
   \frac{\im \,m_{\rm c}}{N\eta}
   }  +\frac{1}{N\eta} \right ]
$$
where we have used  \eqref{res:sempl} in the last step. This proves \eqref{Lambdaofinal}.

The main task in  proving Theorem \ref{sempl}  is to prove \eqref{res:sempl}. {We first  consider the case   that $|z| \le 1-t$. }
We   assume that $ \zeta$ is large enough, e.g.,  $\zeta\geq10$.  By Theorem \ref{wempl} and $m_c \sim |w|^{-1/2}$
\eqref{dA2}
for $|z|  < 1-t$,        there exists a constant  $C_{\zeta+5}$
such that for any $ w \in \b {\rm S}(b)$, $b > 5 C_{\zeta+5}$ and {  $\alpha \ll 1$, we have
\be\label{xy33}
\Lambda( w)\leq \Lambda_1:= \alpha |m_{\rm c}| \sim    O(\alpha |w|^{-1/2}),
 \ee
holds   with the probability larger than
$1-\exp(-\varphi^{\zeta+5})$  (here we have replaced $\zeta$ in Theorem \ref{wempl} by $\zeta + 5$ for the convenience of the following argument).
Since $\b {\rm S}(b)$ is decreasing in $b$, we can choose  $D_\zeta = 5
\max (C_{\zeta+5}, R_\zeta) $   so that we can apply Lemma \ref{lem:Zlem}
with $p_N = \varphi^{\zeta+5}$ (which guarantees \eqref{kk20}). Together with $\Lambda_1 \le |m_c|$, we   have, for any  $  w \in \b {\rm S}(D_\zeta)$ fixed,
\be\label{c513}
\cal D (m)\leq \frac{1}2\varphi^{R_\zeta} |m_{\rm c}|^{-3} \Psi _1^2 , \quad
     \Psi_1
   :=\sqrt{
   \frac{\im \,m_{\rm c}+ |m_c| }{N\eta}}
   +\frac{1}{N\eta},
 \ee
holds   with the probability larger than
$1-\exp(-\varphi^{\zeta+5}(\log N)^{ -2})$. }
  Notice that the application of Lemma \ref{lem:Zlem} causes the probability  in the exponent to  deteriorate by a $(\log N)^{-2}$ factor.

Using  \eqref{c513}, we  can  apply  Corollary \ref{so51} with
\be\label{514ng}
\delta=\delta_1:= \varphi^{ R_\zeta} |m_{\rm c}|^{-3} \Psi _1^2.
\ee
 Here the assumption of $\Lambda(E+10\ii)$ is guaranteed by \eqref{xy33}. By definition of $\Psi_1$  \eqref{c513} and $|m_c| \sim |w|^{-1/2}$ \eqref{dA2}, for  $w \in \b {\rm S}(D_\zeta)$,   we have
$$
\delta\le \varphi^{ R_\zeta}\frac{|w|} {N\eta}  \ll (\log N)^{-8}  |w|^{1/2}.
$$
Furthermore, it is easy to prove that $\delta$ is decreasing in $\eta$ when   $\kappa+\eta $ is small.
We have thus  verified the assumptions on $\delta$  in Corollary \ref{so51}
with the choice  $\delta= \delta_1$ given in \eqref{514ng}.  From \eqref{ag24}, we  obtain for  $w \in \b {\rm S}(D_\zeta)$,  { with $C_0$ being the $C$ in \eqref{ag24}, }
$$
 \Lambda \leq C_0\frac{\delta_1 |w|^{-1}}{\sqrt{\kappa+\eta+\delta_1}}\leq  C_0  \frac{ \varphi^{  R_\zeta} }{N\eta \sqrt{\kappa+\eta+\delta_1}}
$$
holds   with the probability larger than
$1-\exp(-\varphi^{\zeta+5}(\log N)^{- 2})$.
We have thus proved  \eqref{res:sempl} provided  that $\kappa+\eta\geq (\log N)^{-1}$.

We now prove  \eqref{res:sempl} when  $\kappa+\eta\leq  (\log N)^{-1} $. We have in this case $ |w|\sim 1$. We
apply Lemma \ref{lem:Zlem} with $\wt \Lambda = \Lambda_1= |m_c| \sim 1 $ given by \eqref{xy33}. Thus \eqref{c513} holds
and we apply Corollary   \ref{so51}  with $\delta= \delta_1$ \eqref{514ng}.
Since   $\Lambda_1\ge (N\eta)^{-1}$ and $\im m_c\sim \sqrt{\kappa+\eta}$ \eqref{esmallfake}, the conclusion
of Corollary \ref{so51} implies that for  $w \in \b {\rm S}(D_\zeta)$,
$$
  \Lambda     \leq C_0 \varphi^{R_\zeta}  |w|^{1/2} \frac{ \im m_{\rm c}+\Lambda_1 }{ N \eta  \sqrt{\kappa+\eta+\delta_1}}
 \leq  C_1\varphi^{R_\zeta} \frac{1 }{ N \eta  }  + C_1 \varphi^{R_\zeta}  \frac{  \Lambda_1 }{ N \eta  \sqrt{\delta_1}}
$$
 holds   with probability larger than
$1-\exp(-\varphi^{\zeta+5}(\log N)^{- 2})$. {Here $C_1$ depends only  on $C_0$. }
From the definition of $\delta_1$ and $\Psi_1$, we have
$$
 \varphi^{R_\zeta} \frac{  \Lambda_1 }{ N \eta  \sqrt{\delta_1}} \le   \varphi^{ R_\zeta/2}
 \frac{ |m_c|^{3/2} }{ N \eta } \frac{  \Lambda_1 } { \Psi _1} \leq  C_2\varphi^{ R_\zeta /2 } \left(\frac{\Lambda_1 }{N\eta}\right)^{1/2},
$$
 where for the last inequality we used
 $$
 \Psi_1\ge \sqrt{ \Lambda_1 /( N\eta)}.
 $$
 Since $\Lambda_1\ge(N\eta)^{-1} $, combining the last two inequalities,   for  $w \in \b {\rm S}(D_\zeta)$,   we have
\begin{equation}\label{518hh}
 N \eta  |\Lambda|
  \leq C_3  \varphi^{  R_\zeta }+
 C_3     \varphi^{  R_\zeta/2 } \left(N\eta \Lambda_1 \right)^{1/2}
    \le   \varphi^{   R_\zeta } \left(  N\eta \Lambda_1 \right)^{1/2}
\end{equation}
 holds   with the probability larger than
$1-\exp(-\varphi^{\zeta+5}(\log N)^{-  2})$ { for some $C_3$}. Notice that we have used $ N \eta  \ge \varphi^{5 R_\zeta}$
in the last step in \eqref{518hh}.

Repeating this process   with the choices
$$
N \eta \Lambda_2 { :}= \varphi^{   R_\zeta } \left(N\eta \Lambda_1 \right) ^{1/2}
,\quad     \Psi_2
   :=\sqrt{
   \frac{\im \,m_{\rm c}+\Lambda_2}{N\eta}
   }
   +\frac{1}{N\eta}
   ,\quad \delta_2:= \varphi^{  R_\zeta} |m_{\rm c}|^{-3} \Psi _2^2,
$$
  for $w \in \b {\rm S}(D_\zeta)$,   we obtain that
 $$
 N \eta  |\Lambda| \leq     { C_3}  \varphi^{ R_\zeta }+
 {C_3}    \varphi^{ R_\zeta/2 } \left(N\eta \Lambda_2 \right)^{1/2}
  \le     \varphi^{   R_\zeta } \left( N\eta  \Lambda_2  \right)^{1/2}
 $$
holds   with the probability larger than
$1-   \exp(-\varphi^{\zeta+5}(\log N)^{-4})$.
{ Notice that the last constant $C_3$ is the same as the one appears in \eqref{518hh} and it does not change in the iteration
procedure. }
 We now
iterate this process $K$ times
to have
$$
N \eta  |\Lambda|
    \le    \varphi^{ R_\zeta } \left( N\eta  \Lambda_K  \right)^{1/2}
    \le   \varphi^{ 2 R_\zeta }\left( N\eta \Lambda_1 \right)^{1/2^K}
 $$
holds   with the probability larger than
$1-  \exp(-\varphi^{\zeta+5}(\log N)^{- 2 K})$.  We need $K$ so large that
$$
\left( \Lambda_1 N\eta \right)^{1/(2^K)} \le   (C N)^{1/(2^K)}
\leq \varphi,
$$
i.e.,
$$
K\geq \frac{\left(\log \log ( CN)-\log \log \varphi \right)}{\log 2} = \frac{\left(\log \log ( CN)-2\log \log \log  N  \right)}{\log 2}
$$
On the other hand, we need $K$ small enough so that
\be
1-\exp(-\varphi^{\zeta+5}(\log N)^{-2K})\geq1 -\exp(-\varphi^{\zeta}), \quad \text{ i.e.,} \;   \varphi^{5}(\log N)^{-2K} \geq  1 .
\ee
{We note that it also guarantees \eqref{kk20}, since $\varphi^{\zeta+5}\ge p_1\ge p_2\ge\cdots \ge p_K\ge \varphi$. }
We choose $K = \log \log N/\log 2 $ and
  we have thus proved that
\be\label{521}
N\eta  |\Lambda| \leq
 \varphi^{ 2 R_\zeta+{1}}  \ee
with the probability larger than
$1- \exp(-\varphi^{\zeta})$ which  implies  \eqref{res:sempl} when  $\kappa+\eta\leq (\log N)^{-1}$.
This  completes the proof of  Theorem \ref{sempl}. \nc
\end{proof}

\subsection{Proof of Lemma \ref{lem:Zlem}. }
The first step in proving Lemma \ref{lem:Zlem} is to derive a second order self-consistent equation which identifies
the first order dependence of the correction in the self-consistent equation derived in Lemma \ref{cor:self}.
The second error terms will be bounded by $\Psi^2$; the first order terms are of the forms of
averages of $Z^{(i)}_i$ and $\cal Z_i$.
In Lemma \ref{lem:32y},  the averages of $Z^{(i)}_i$ and $\cal Z_i$ will be estimated  by $\Psi^2$.  This  improvement from the naive
order $\Psi$ to $\Psi^2$ is the key ingredient to obtain
the strong local law. We remark  that $\im m_{\rm c} \ll |m_{\rm c}|$ when $\eta + \kappa \ll 1$. Hence
the dependence of  $\im m_{\rm c}$ verses  $m_c$  has to be tracked carefully. We now state  the second order self-consistent equation:
as the following lemma.

\begin{lemma} [second order self-consistent equation]\label{cor:Dm} For any constant $\zeta>0$, there exists $C_\zeta>0$
such that for $  w\in \b {\rm S}(b)$, $b\ge 5C_\zeta$ \hp{\zeta}
\begin{align}\label{459g}
 \cal D(m)   \leq \OO\left(\varphi^{C_\zeta}   \frac{1 }{m_{\rm c}^3}  \Psi^2 + w[\mathcal Z]+m_{\rm c}^{-2}[ Z_\ast^\ast]\right)
 \end{align}
where
$$
[ Z_\ast^\ast] = N^{-1} \sum_i Z^ {(i)}_{i}, \quad [ \mathcal Z] = N^{-1} \sum_i \mathcal Z_{i}\, .
$$
  \end{lemma}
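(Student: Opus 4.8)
The plan is to rerun the expansion that produced the first-order self-consistent equation \eqref{110-31g} in Lemma~\ref{cor:self}, but now retaining the leading error terms instead of collapsing them all into a single $\OO(\varphi^{Q_\zeta}\Psi)$. Throughout we may use that, by Theorem~\ref{wempl}, the a priori bound $\Lambda\le\varphi^{C_\zeta}\Psi\ll|m_{\rm c}|$ holds $\zeta$-high-probability on $\b {\rm S}(b)$, so \eqref{s3} is available; in particular $|G_{ii}|\sim|1+m|\sim|m_{\rm c}|\sim|w|^{-1/2}$ and, by \eqref{82}, the quantity $B^{0}:=1+m-\frac{|z|^2}{w(1+m)}$ satisfies $|B^{0}|\sim|wm_{\rm c}|^{-1}\sim|w|^{-1/2}$.

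First I would expand \eqref{110c1} one order further than was done in \eqref{110a1}--\eqref{457}: since $(m_G^{(i,i)}-m)+Z_i^{(i)}=\OO(\varphi^{Q_\zeta}\Psi)$ is much smaller than $|1+m|$,
\[
\mathcal G_{ii}^{(i,\emptyset)}=-\frac{1}{w(1+m)}+\frac{(m_G^{(i,i)}-m)+Z_i^{(i)}}{w(1+m)^{2}}+\OO\!\left(\frac{\varphi^{2Q_\zeta}\Psi^{2}}{|w|\,|1+m|^{3}}\right).
\]
Substituting this, together with $m_\mG^{(i,\emptyset)}=m+(m_\mG^{(i,\emptyset)}-m)$, into \eqref{110} and again expanding to second order in the corrections (all of which are small relative to $|B^{0}|$) yields
\[
G_{ii}=-\frac{1}{wB^{0}}+\frac{1}{w(B^{0})^{2}}\!\left[(m_\mG^{(i,\emptyset)}-m)+\frac{|z|^{2}}{w(1+m)^{2}}\big((m_G^{(i,i)}-m)+Z_i^{(i)}\big)+\mathcal Z_i\right]+\OO\!\left(\frac{\varphi^{2Q_\zeta}\Psi^{2}}{|w|\,|B^{0}|^{3}}\right).
\]
Averaging over $i$, using $m=\frac1N\sum_iG_{ii}$, and multiplying through by $w(1+m)^{2}-|z|^{2}=wB^{0}(1+m)$ to convert the equation for $m$ into one for $\Upsilon=P_{w,z}(m)=\mathcal D(m)\,m(1+m)$: the quadratic remainder becomes $\OO(\varphi^{2Q_\zeta}|w|^{1/2}\Psi^{2})$, i.e.\ $\OO(\varphi^{2Q_\zeta}m_{\rm c}^{-3}\Psi^{2})$ after dividing by $m(1+m)$ with $|m(1+m)|\sim|m_{\rm c}|^{2}\sim|w|^{-1}$; the $\mathcal Z_i$-average survives as $[\mathcal Z]$ with a bounded-modulus prefactor, contributing $\OO(w[\mathcal Z])$ to $\mathcal D(m)$, and the $Z_i^{(i)}$-average survives as $[Z_\ast^\ast]$, contributing $\OO(m_{\rm c}^{-2}[Z_\ast^\ast])$.

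The remaining work is to absorb the two \emph{trace-difference} averages $\frac1N\sum_i(m_\mG^{(i,\emptyset)}-m)$ and $\frac1N\sum_i(m_G^{(i,i)}-m)$ into the $\varphi^{C_\zeta}m_{\rm c}^{-3}\Psi^{2}$ term. For the diagonal part one uses the identity $m_G^{(i,\emptyset)}-m=-[G^{2}]_{ii}/(NG_{ii})$ from the proof of \eqref{37mk}, together with $\frac1N\sum_i|[G^{2}]_{ii}|=\eta^{-1}\im m$, $|G_{ii}|\sim|m_{\rm c}|$ and $\im m\le\im m_{\rm c}+\Lambda$, giving $\big|\frac1N\sum_i(m_G^{(i,\emptyset)}-m)\big|\lesssim\frac{\im m}{N\eta\,|m_{\rm c}|}\lesssim|m_{\rm c}|^{-1}\Psi^{2}$, which after the prefactors is of the required size; the remaining differences $m_G^{(i,i)}-m_G^{(i,\emptyset)}$ and $m_\mG^{(i,\emptyset)}-m_G^{(i,\emptyset)}$ are handled with the rank-one perturbation formulas of Lemma~\ref{lem: GmG}, the large-deviation estimates for the associated $Z$-type quantities, and the trace shift \eqref{35bd}. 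The step I expect to be the main obstacle is precisely this bookkeeping: \eqref{35bd} introduces deterministic $\OO((Nw)^{-1})$ corrections coming from the $\tr Y^{*}Y$ versus $\tr YY^{*}$ discrepancy of non-square minors, and a single such term is far too large to be dominated by $m_{\rm c}^{-3}\Psi^{2}$; one must check that the contributions of these shifts — from $m_\mG^{(i,\emptyset)}$ in \eqref{110} and from $m_G^{(i,i)}$ inside \eqref{110c1} — cancel exactly, which they must, since the limiting equation $\mathcal D(m_{\rm c})=0$ is insensitive to them. Tracking the $m_{\rm c}$-dependent prefactors carefully (since $|m_{\rm c}|\sim|w|^{-1/2}$ is large as $w\to0$) and collecting the pieces then yields \eqref{459g} with $C_\zeta$ a fixed multiple of $Q_\zeta$.
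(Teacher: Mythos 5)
Your overall plan is the paper's: go one order deeper in the expansion that produced Lemma~\ref{cor:self}, keep the linear $[\mathcal Z]$ and $[Z_\ast^\ast]$ averages, estimate the quadratic remainders by $m_{\rm c}^{-3}\Psi^2$, and bound the residual trace-difference averages $N^{-1}\sum_i(m_\mG^{(i,\emptyset)}-m)$, $N^{-1}\sum_i(m^{(i,i)}-m)$ by $|m_{\rm c}|^{-1}\Psi^2$ via the resolvent identity \eqref{531tt}. Whether you expand $G_{ii}$ (as you do) or $G_{ii}^{-1}$ and use $\sum_i(G_{ii}-m)=0$ (as the paper does in \eqref{449t1}--\eqref{449w}) is a cosmetic difference; both routes deliver the same equation for $\cal D(m)$ after you multiply by $wB^0(1+m)$ and divide by $m(1+m)$. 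The prefactor accounting you sketch is consistent with \eqref{449w}--\eqref{449w4}.

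However, the part you flag as the main obstacle is based on a misconception, and as written it is a gap. You claim that the deterministic $\OO((Nw)^{-1})$ shift coming from \eqref{35bd} is ``far too large to be dominated by $m_{\rm c}^{-3}\Psi^2$'' and that one must find an exact cancellation between the shift in $m_\mG^{(i,\emptyset)}$ and a shift in $m_G^{(i,i)}$. There is no such cancellation to find: $m_G^{(i,i)}=m_\mG^{(i,i)}$ carries no trace shift at all, since $|\bb T|=|\bb U|=1$ and the correction in \eqref{35bd} is proportional to $|\bb U|-|\bb T|$. The only shift present is the single $\frac{1}{Nw}$ from $m_\mG^{(i,\emptyset)}=m_G^{(i,\emptyset)}+\frac{1}{Nw}$. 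The paper's mechanism is a direct absorption, not cancellation: after this term enters $\cal D(m)$ multiplied by the factor $|w|$ (compare \eqref{449w4} and \eqref{452}), it becomes $\OO(N^{-1})$, and the paper then checks that $|m_{\rm c}^{-3}\Psi^2|\ge\OO(N^{-1})$. This lower bound holds because $\Psi^2\ge\frac{\im m_{\rm c}}{N\eta}$ and, by Lemma~\ref{pmcc1}, $\im m_{\rm c}\gtrsim\eta$ in cases 1, 2, 4 (giving $|m_{\rm c}|^{-3}\frac{\im m_{\rm c}}{N\eta}\gtrsim N^{-1}$ since $|m_{\rm c}|\sim1$ there), while in case 3 one has $\im m_{\rm c}\sim|w|^{-1/2}$ and $|m_{\rm c}|^{-3}\sim|w|^{3/2}$, so $|m_{\rm c}|^{-3}\frac{\im m_{\rm c}}{N\eta}\sim\frac{|w|}{N\eta}\ge N^{-1}$ since $\eta\le|w|$. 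Had you instead gone hunting for a cancellation, you would not have found one, and the proof would have stalled precisely where you predicted trouble. Replace the cancellation argument by this direct domination and your proposal closes.
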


\noindent
\begin{proof}
 We  have proved  the weak local Green function estimate, i.e., Theorem \ref{wempl}, in Section \ref{sec:PTs}.
This in particular implies  that \eqref{s3} holds \hp{\zeta} in $\b {\rm S}(b)$ for large enough $b$  \hp{\zeta}.  With this remark in mind, we now prove
Lemma \ref{cor:Dm}.

We first take the inverse of both sides of  \eqref{110-29}  and   sum  up $i$ to get,  \hp{\zeta},
\begin{align}\label{449t1}
 {  N^{-1} } & \sum_i G_{ii}^{-1}=-w-wm+\frac{|z|^2}{1+m}+w[\mathcal Z]  { - \frac {|z|^2}  { (1+ m)^2}  [ Z_\ast^\ast] } \\\nonumber & +  {  N^{-1} } \sum_i \OO\left ( \frac { (Z^ {(i)}_{i})^2 + \frac 1 { (N \eta)^2}   }{ (1+ m)^3} \right )
+ |w| \OO( \frac 1 N\sum_i m_\mG^{(i,\emptyset)}-m)+|m_{\rm c}|^{-2}\OO\left ( \left | \frac1N\sum_i m^{(i,i)}-m \right |  \right ),
\end{align}
where we have used \eqref{457} and the bound \eqref{81}.
Recall the  estimates of $\mathcal Z_i$ and $Z^{(i)}_i$  by $\Psi$  in \eqref{44} and \eqref{485}.
Hence we have
\begin{align}\label{449t}
 {  N^{-1} }  \sum_i G_{ii}^{-1}&=-w-wm+\frac{|z|^2}{1+m}+\varphi^{C_\zeta}  \OO(m_{\rm c}^{-3} \Psi^2) \\\nonumber
  & +\OO(w[\mathcal Z ])+\OO(m_{\rm c}^{-2}[ Z_\ast^\ast] )
+ |w| \OO( \frac 1 N\sum_i m_\mG^{(i,\emptyset)}-m)+|m_{\rm c}|^{-2} \OO\left ( \left | \frac1N\sum_i m^{(i,i)}-m \right |  \right ).
\end{align}
{ By \eqref{xmb2}-\eqref{110-330}, we have
\be\label{dnn2}|G_{ii}-m|\le\OO(\varphi^{Q_\zeta}\Psi) \ll |m_{\rm c}|,
\ee
where $b\geq 5 Q_\zeta$ and $Q_\zeta$ is defined in Lemma \ref{lem:bh}.  } We now  perform the expansion  $G_{ii} ^{-1} = [(G_{ii}-m) + m] ^{-1}$ to have
$$
 G_{ii} ^{-1}=m ^{-1}- \frac{G_{ii}-m}{m^2}+O( \varphi^{{ 2}Q_\zeta} |m_{\rm c}|^{-3}\Psi^2).
$$
Using this approximation in
\eqref{449t}, we have
\begin{align}\label{449w}
m^{-1} +w+wm-\frac{|z|^2}{1+m}  = & \varphi^{{ 2}Q_\zeta}  \OO(m_{\rm c}^{-3} \Psi^2)  +\OO(w[\mathcal Z ])+\OO(m_{\rm c}^{-2}[ Z])  \\\label{449w4} &
+ |w| \OO( \frac 1 N\sum_i m_\mG^{(i,\emptyset)}-m)+|m_{\rm c}|^{-2} \OO\left ( \left | \frac1N\sum_i m^{(i,i)}-m \right |  \right ).
\end{align}
Using  \eqref{35bd}, we have
$$
\frac 1 N\sum_i m_\mG^{(i,\emptyset)}-m=\frac 1 N\sum_i m_G^{(i,\emptyset)}-m +\frac{C}{Nw}.
$$
Furthermore,  with  (\ref{111}) we have
\be\label{531tt}
m_ G^{(i,\emptyset)}-m
=
\frac1N
\left( G_{ii}+\sum_{j \neq i}\frac{ G_{ji} G_{ij} }{ G_{ii}}\right)=\frac1N\sum_j \frac{ G_{ji} G_{ij}}{ G_{ii}} =   \OO(\frac{\im  G_{ii}}{N\eta |G_{ii}|}).
\ee
The diagonal element  { $G_{ii}$ can be estimated by \eqref{dnn2}  so that}
$$
\left | \frac{\im G_{ii}}{N\eta |G_{ii}|} \right | \le \varphi^{Q_\zeta}   \frac{\im m_{\rm c}+\Lambda + \Psi   }{N\eta |m_{\rm c}|}
\le \varphi^{Q_\zeta}  \frac {\Psi^2 } { |m_{\rm c}| }.
$$
Therefore, we have
\be\label{452}
\OO( \frac1N\sum_i m_\mG^{(i,\emptyset)}-m)\leq \OO( \frac1N\sum_i m_G^{(i,\emptyset)}-m)  +\frac{C}{N  |w| }
\le \varphi^{Q_\zeta}  |m_{\rm c}|^{-1} \Psi^2 +\frac{C}{N  |w| }.
\ee
Notice that only the imaginary part of $m_{\rm c}$ appears through $\Psi$ instead of $m_{\rm c}$ which can be much bigger
near the spectral edge.

We now estimate the last term in \eqref{449w4}. Notice that  $\mG^{(i, \emptyset)}$ is the Green function of the matrix $A^+ A$ where $A = (Y^{(i, \emptyset)})^*$.
Then $m^{(i,i)}$ is the Green function of $A^{(i, \empty), +} A^{(i, \empty)}$ where we have used
$A^{(i, \empty)} = Y^{(i, i)}$. Thus we can apply \eqref{531tt}
 (which holds for matrices of the form $A^+ A$ with A not necessarily a square matrix)   to get
$$
|m_\mG^{(i,\emptyset)} -  m^{(i,i)}|\leq  \OO(\frac{\im \mG^{(i,\emptyset)}_{ii}}{N\eta |\mG^{(i,\emptyset)}_{ii}|}) . $$
By \eqref{479}, we have
$$
 \im \mG^{(i , \emptyset)}  _{ii}\leq C \left(\im  m_{\rm c}+  \Lambda +  \varphi^{C_\zeta} \Psi \right).
$$
By \eqref{457} and \eqref{110a1},
$$
|\mG^{(i , \emptyset)}_{ii}|\sim |w^{-1/2}|\sim |m_{\rm c}|  \, .
$$
These  estimates imply  that
 \be\label{456}
 \left | \frac1N\sum_i m^{(i,i)}-m \right | \le \left |  \frac1N\sum_i m_\mG^{(i,\emptyset)}-m \right | +  \frac1N\sum_i|m^{(i,i)}-m_\mG^{(i,\emptyset)}|\leq \varphi^{ Q_\zeta} |m_{\rm c}|^{-1}\Psi^2.
\ee
Inserting \eqref{452} and \eqref{456} into \eqref{449w}, we obtain
$$
 \cal D(m)   \leq \OO\left(\varphi^{{ 2}Q_\zeta}\left(  \frac{1 }{m_{\rm c}^3}  \Psi^2+N^{-1}\right)+ w[\mathcal Z]+m_{\rm c}^{-2}[ Z_\ast^\ast]\right).
$$
To conclude Lemma \ref{cor:Dm}, we choose { $C_\zeta=2Q_\zeta$} and it remains to  prove $
|\frac{1}{m_{\rm c}^3}\Psi^2|\geq \OO(N^{-1})$.
By  definition of $\Psi$ and the fact that $ |m_{\rm c}| \sim |w|^{-1/2}$  \eqref{dA2},   this inequality follows from  the following
property of $\im m_c$:
$$
| \frac{\im m_{\rm c}}{N\eta}|\geq \OO(N^{-1}).
$$
This estimate  on $\im m_c$ is a direct consequence of    \eqref{A17}, \eqref{esmallfake},  \eqref{A20a} and \eqref{A21}.
This completes the proof of Lemma \ref{cor:Dm} ({ with  $C_\zeta$ increasing by 1}).

 \end{proof}

We now
estimate the averages $[\mathcal Z]$ and $  [ Z_\ast^\ast] $. Our goal is to catch cancellation effects due to the average over the indices $i$.
This  is the content of the next lemma, to be proved  in next subsection.  Clearly this lemma  completes the proof of Lemma
\ref{lem:Zlem}.

 \begin{lemma}\label{lem:32y} For any $\zeta>1$,  there exists $R_\zeta > 0 $ such that the following statement holds. Suppose
 for  some deterministic number $\wt \Lambda(w, z)$  (which can depend on $\zeta$)  we have
$$
  \Lambda(w, z) \leq \wt \Lambda(w, z)  \ll m_c (w, z)
$$
for  $ w \in{ \b {\rm S}}( b)$, $ b > 5 R_\zeta$,  in a set
$\Xi$  with  $\P(\Xi^c) \leq e^{-p_{N}(\log N)^2 }$ and $p_N$ satisfies that
\be\label{kk20n}
\varphi { \le} p_N{ \le} \varphi^{  2\zeta}.
\ee
 Then there exists  a  set $\Xi'$   such that  $ \P(\Xi'^c) \leq e^{-p_{N} } $  and
\be \label{32youn}
\big| [\cal Z]\big|+\big |[ Z_\ast^\ast]\big|
\leq \varphi^{C_\zeta} |w|^{1/2} \widetilde \Psi ^2, { \quad in\quad  \Xi' }
  \ee
 where $\wt \Psi$ is defined in \eqref{D521}.
   \end{lemma}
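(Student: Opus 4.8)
The plan is to estimate the two averages $[\cal Z] = N^{-1}\sum_i \cal Z_i$ and $[Z_\ast^\ast] = N^{-1}\sum_i Z_i^{(i)}$ by exploiting the cancellation coming from the sum over $i$. Each $\cal Z_i$ (resp. $Z_i^{(i)}$) is, by Definition \ref{Zi-def}, a centered quadratic form in the $i$-th column (resp. row) of $Y$ with a coefficient matrix $\mG^{(i,\emptyset)}$ (resp. $G^{(i,i)}$) that is independent of that column/row. A single such quadratic form has size $\prec \varphi^{C}\Psi$ by the standard large-deviation bound for quadratic forms (the estimates \eqref{44}, \eqref{485}), but the $N^{-1}\sum_i$ should produce an extra factor comparable to $\Psi$, leading to the desired $\widetilde\Psi^2$. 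First I would write $\cal Z_i = \cal Z_i^{(1)} + \cal Z_i^{(2)}$ where $\cal Z_i^{(1)}$ collects the off-diagonal $\sum_{k\ne l}$ part of the quadratic form and $\cal Z_i^{(2)}$ the diagonal $\sum_k (|y_i(k)|^2 - \E|y_i(k)|^2)$ part. The diagonal piece has a clean concentration: $[\cal Z^{(2)}] = N^{-1}\sum_i \cal Z_i^{(2)}$ is an average of (conditionally) independent-ish contributions, and a direct moment/large-deviation computation, using the subexponential decay \eqref{subexp} and the boundedness of the diagonal entries $\mG_{kk}^{(i,\emptyset)} = \OO(|m_{\rm c}|)$, gives $[\cal Z^{(2)}] \prec \varphi^C (N\eta)^{-1/2} \cdot (\text{something small})$, in fact bounded by $\varphi^C |w|^{1/2}\widetilde\Psi^2$ after using $N\eta \gtrsim \varphi^b|w|^{1/2}$.

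The heart of the matter is the off-diagonal average $[\cal Z^{(1)}]$. The approach is the high-moment method of \cite{ErdYauYin2010Adv}: for an even integer $p \sim p_N$, one expands $\E|[\cal Z^{(1)}]|^p$, organizing the $2p$ columns/rows that appear into a graph, and shows that each term is bounded by $(\varphi^C |w|^{1/2}\widetilde\Psi^2)^p$ up to combinatorial factors, so that Markov's inequality yields the claim on the set $\Xi'$ with $\P(\Xi'^c)\le e^{-p_N}$. The key structural input is that after removing column $i$, the remaining randomness interacts with column $j$ only through the minor $\mG^{(ij,\emptyset)}$, and the difference $\mG^{(i,\emptyset)} - \mG^{(ij,\emptyset)}$ is a rank-one update; iterating this (using Lemma \ref{lem: GmG} and Corollary \ref{xdz49}, specifically \eqref{2gsh}–\eqref{3gsh}) lets one replace all the Green functions $\mG^{(\cdots)}$ appearing in the expansion by a \emph{single} common minor, at the cost of errors already controlled by $\widetilde\Psi$. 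One then feeds in the a priori bounds: $|\mG_{kl}^{(\cdots)}| \prec \varphi^C\Psi \le \varphi^C\widetilde\Psi$ for $k\ne l$, $|\mG_{kk}^{(\cdots)}| \sim |m_{\rm c}|$, and the crucial Ward-type identity $\sum_l |\mG_{kl}|^2 = \eta^{-1}\im\mG_{kk} \lesssim \eta^{-1}(\im m_{\rm c} + \widetilde\Lambda)$, which is precisely what converts two off-diagonal Green-function entries into one factor of $\widetilde\Psi^2$ after summing an index. The same argument applies verbatim to $[Z_\ast^\ast]$ by the row-column reflection symmetry, interchanging $Y \leftrightarrow Y^*$, $G \leftrightarrow \mG$.

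**The main obstacle** I expect is the bookkeeping in the high-moment expansion: controlling the combinatorics of which indices are "fresh" versus "repeated" among the $2p$ summation variables, and verifying that whenever an index is fresh one genuinely gains a factor $\widetilde\Psi$ (via the Ward identity and the centering $\E_{y_i}$), while repeated indices contribute only bounded factors and a controlled combinatorial multiplicity $\le (Cp)^{Cp}$, which is absorbed since $p_N \le \varphi^{2\zeta}$ and $\varphi$ grows faster than any power of $\log N$. A secondary technical point is keeping the deterioration of the probability bound under control: each replacement of a Green function by its minor via \eqref{2gsh} is valid only \hp{\zeta} on a set whose complement has probability $\le N^C e^{-\varphi^\zeta}$, and one must check that intersecting the (at most polynomially many) such events with $\Xi$ still leaves a set $\Xi'$ of the claimed probability — this is where the hypothesis $\P(\Xi^c)\le e^{-p_N(\log N)^2}$ with the $(\log N)^2$ cushion is used, exactly so that $e^{-p_N(\log N)^2} + N^C e^{-\varphi^\zeta} \le e^{-p_N}$. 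Finally, one has to be careful that the bound is stated with $|w|^{1/2}\widetilde\Psi^2$ rather than $|m_{\rm c}|^{-1}\widetilde\Psi^2$; since $|m_{\rm c}|\sim|w|^{-1/2}$ by \eqref{dA2} these agree, but the edge regime $\im m_{\rm c}\ll|m_{\rm c}|$ means only $\im m_{\rm c}$ (not $|m_{\rm c}|$) may appear inside $\widetilde\Psi$, so the Ward identity must be applied in the sharp form $\sum_l|\mG_{kl}|^2 = \eta^{-1}\im\mG_{kk}$ and $\im\mG_{kk}$ bounded using \eqref{479} by $C(\im m_{\rm c} + \widetilde\Lambda + \varphi^C\widetilde\Psi)$, not crudely by $|m_{\rm c}|$.
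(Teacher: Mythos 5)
Your proposal is correct in outline but takes a genuinely different organizational route from the paper. The paper does not split $\cal Z_i$ into diagonal/off-diagonal nor carry out a bare-hands high-moment graph expansion: it invokes the abstract decoupling Lemma~\ref{abstractZlemma} (imported from \cite{PilYin2011}), which black-boxes all the "fresh vs.\ repeated index" combinatorics you are worried about in your final paragraph. The essential content of the paper's proof is then the algebraic decomposition \eqref{511new}--\eqref{511Lwu} demanded by that lemma, and the key tool for producing it is the Schur complement identity \eqref{eqn:G11fin}: for a block of indices $A$, \emph{all} entries $G_{ij}$, $i,j\in A$, are expressed simultaneously and exactly through a single common minor $\cal G^{(A,\emptyset)}$ and a matrix $R$ with $\|\mathbf{1}(\Xi)R\|\ll1$. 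One then expands $(I+R)^{-1}$ as a finite geometric series, and the pivotal observation is \eqref{88}: any product of fewer than $|A|$ factors of $R$ fails to touch some $\by_\ell$, $\ell\in A$, so it is annihilated by $\cal Q_A$ exactly. This replaces your iterative rank-one update scheme, where replacing each $\mG^{(\cdots)}$ by a common minor via \eqref{2gsh}--\eqref{3gsh} is only \emph{approximate}, incurring errors of order $\widetilde\Psi$ at each of up to $\sim p$ steps whose accumulation and interaction with the $\E_{y_i}$ centering must be tracked by hand. Your approach would work with careful bookkeeping (it is essentially the fluctuation-averaging argument of \cite{ErdYauYin2010Adv} done from scratch), and the Ward-identity mechanism you describe for converting two off-diagonal entries into $\widetilde\Psi^2$ is exactly the engine hidden inside Lemma~\ref{abstractZlemma}; but the paper's route buys an exact, one-shot decomposition and a reusable abstract lemma, at the modest cost of needing to verify the hypotheses \eqref{511}--\eqref{511a} (i.e., constructing the auxiliary sets $\Xi^*$ and random variables $\wt{\cal Z}_{i,A}$) rather than re-deriving the moment bound.
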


 \subsection{Strong bounds on $[Z]$.  }
 In this subsection, we prove  Lemma \ref{lem:32y}. The main tool is the abstract cancellation Lemma \ref{abstractZlemma}.

We first  perform a cutoff for all random variables $X_{ij}$ in $X$
so that  $ |X_{ij}| \le N^{10}$. Due to the subexponential decay assumption, the probability of the complement of this event  is $e^{-N^c}$,
which is negligible.

Define $P_i$ and $\cal P_i$ as the operator for the  expectation value  w.r.t. the $i$-th row and $i$-th column. Let
$$
Q_i=1-P_i,\quad \cal Q_i=1-\cal P_i$$
{ With this convention and Lemma \ref{idm}, we can rewrite $ \cal Z_i$ and $Z_i^{(i)}$, from Definition \ref{Zi-def}, } as
$$
  \cal Z_i=\cal Q_i \left(wG_{ii}\right)^{-1}, \quad Z_i^{(i)}=  Q_i \left(w\mG^{(i, \emptyset)}_{ii}\right)^{-1}.
$$
 By definition, for any $i,j, \bb U, \T$, we know  $|G^{\bb U, \T}_{ij}|\leq \eta ^{-1}$.
From the identities of $G_{ii}$ and $\mG^{(i, \emptyset)}_{ii}$ in Lemma \ref{idm}
 and $|X_{ij}|\leq N^C$,  we have,   for any $1\le i\le N$,
\be\label{bdl}
 |G_{ii}|^{-1}
+|\mG^{(i, \emptyset)}_{ii}|^{-1}
\leq N^C.
\ee

   Let  $D_\zeta=\max\{C_{6\zeta+ 10}, Q_{6\zeta+ 10}+1\}$ with $C_{\zeta}$  defined  in Lemma \ref{wempl} and $Q_\zeta$ in Lemma \ref{lem:bh}.
Then for any  fixed $\bb T, \bb U$: $|\bb T|$, $|\bb U|\leq p$  there exists a set { $\Xi_{  \bb T, \bb U}$}
with
$$
P(\Xi_{  \bb T, \bb U})\geq 1-e^{-\varphi^{{ 6} \zeta+10}}
$$
 such that for  any ${w}\in \b {\rm S}(b)$, $b>5D_\zeta$   the following properties hold.

\begin{enumerate}
 \item for $w \in{ \b {\rm S}}(b )$
\be\label{wwxx}
 \Lambda\le \varphi^{-D_\zeta/4}|w^{-1/2}|, \quad \Psi\leq \varphi^{-2D_\zeta}|w^{-1/2}|
 \ee
\item for $w \in{ \b {\rm S}}(b )$
  \be\label{res:wemplnew}
  \max_{ij}|G_{ij}(z)-m_{\rm c}(z)\delta_{ij}|
  \leq \varphi^{D_\zeta}\frac {1}{|  w^{1/2}|}
\left(
\frac{|  w^{1/2}|}{N\eta}
\right)^{1/4} , \quad b > 5 D_\zeta.   \ee

\item   for any $i\neq j$,
 \be \label{559c1}
|(1-\E_{\by_i})\by_i^*  \cal G^{(i \bb T,  \emptyset)}  \by_i|
 +|\by_i^*  \cal G^{(ij \bb T,  \emptyset)}  \by_j|\leq \varphi^{D_\zeta}\Psi
 \ee
 \be\label{559c2}
 |(1-\E_{ \mathrm y_i})\mathrm y_i^{(i)} G^{(i ,  i \bb U)} ( \mathrm y_i^{(i)}) ^* |
 +| \mathrm y_i^{(i)} G^{(i ,  ij \bb U)} ( \mathrm y_j^{(i)}) ^* |
\leq \varphi^{D_\zeta}\Psi
\ee
\item for any $i $ and $\bb T, \bb U$: $|\bb T|+|\bb U|\leq p$,
 \be\label{559c3}
  \left| \mG^{(i\bb T,\emptyset )}_{ii}-  \frac{-1}{w(1+m^{(i\bb T,\emptyset )} )}\right|\leq \varphi^{D_\zeta}\Psi
\ee
 \end{enumerate}

Here (i) and (ii) follow from Lemma \ref{wempl};
  (iv) follows from \eqref{78} and  the case (iii) with $\bb T = \emptyset = \bb U$ follows from Lemma \ref{lem:bh} and  \eqref{110d2}. { The general case, i.e., $\bb T, \bb U\neq \emptyset$ can be proved similarly using  \eqref{37mk}.}
Furthermore, since  $|\bb T|$,$ |\bb U|\leq p$ and $p\leq \varphi^{2\zeta}$, there exists   a set $\Xi_0$ with
$$
P(\Xi_0)\geq 1-e^{-\varphi^{2 \zeta+5}}
$$
 such that for  any ${w}\in \b {\rm S}(b)$, $b>5D_\zeta$   the above properties \eqref{wwxx}-\eqref{559c3} hold  for {\it all} $|\bb T|$,$ |\bb U|\leq p$.
The reason is the number of the $\bb T$, $\bb U$ satisfying   $|\bb T|$,$ |\bb U|\leq p$ is bounded by $N^{2p}\leq \varphi^{4\zeta+1}$, where we have used \eqref{kk20n}.

 Since $\Psi$ is a monotonic in $\Lambda$,  we can
 replace $\Psi$ in \eqref{559c1}- \eqref{559c3} by  $\wt \Psi$ in the set $\Xi \cap \Xi_0$. {By  \eqref{kk20n}, we have   $\P[\Xi_0^c] \ll e^{-p_{N}(\log N)^2 }$.} For notation simplicity we will use $\Xi$ for the set $\Xi \cap \Xi_0$ from now on.  We claim that, for any $i\in A\subset\llbracket 1, N\rrbracket$,  $|A|\leq p$, there exist  decompositions

 \be \label{511new}
    \cal Q_{A}  \left(wG_{ii}\right)^{-1}
 \;=\;        { { \cal Z}}_{i, A}+   \cal  Q_{A}{\bf 1}(\Xi ^c)
   \wt { { \cal Z}}_{i, A}
 \ee
   \be
\label{511Lwu}
     Q_{A}  \left(w\mG^{(i,\emptyset)}_{ii}\right)^{-1}
     \;=\;           { { Z}}_{i, A}   +  Q_{A}{\bf 1}(\Xi ^c)\wt { { Z}}_{i, A}
      \ee
so that \eqref{511a} holds
 with $\cal Y=|w|^{-1/2}$ and $\cal X=\varphi^{D_\zeta+2
  \zeta}|w^{ 1/2}|\wt \Psi$. Notice that the  condition $\cal X<1$ follows from  $\wt \Lambda\ll  |m_c|$ and $N\eta\ge \varphi^{5D_\zeta} |m_c|$
if   $ {w}\in \b {\rm S}(b)$, $b>5D_\zeta$  is large enough. Thus  we obtain that
   \be\label{nd565}
   \E\left[ |\cal Z|^p \right]+ \E\left[ | Z^*_*|^p \right]\leq |w^{1/2}|^{ p} (Cp)^{4p}(\varphi^{2D_\zeta+4\zeta}\wt\Psi^2)^{ p}
\ee
Choosing $C_\zeta=2D_\zeta+20\zeta $, {   one can see that} \eqref{32youn} follows from \eqref{kk20n},  \eqref{nd565} and
the Markov inequality.

It remains to prove \eqref{511new} and \eqref{511Lwu}.  We prove \eqref{511new} first. For simplicity, we assume that  ${A = \{ 1, \ldots,  \abs{A}\}}$.   Denote  the first $|A|$ column of $Y_z$   by  ${\bf a}$ so that $\bf a$ is a
$N \times |A|$ matrix.
Similarly, denote by  $B $ the matrix obtained after removing  the first $K$-columns
of $Y$.   Then we  have the identity
 $$
Y^* Y - w  \;=\;
\begin{pmatrix}
\ba^*  \ba - w & \ba^*  B
\\
B^*  \ba & B^*  B - w
\end{pmatrix}\,.
$$
Recall  the identity \eqref{499}:
for any matrix $M$,
 $$
M (M^*M - w)^{-1} M^*=1+ w ( M M^*   - w)^{-1}.
$$
Then we have for $i,j\in A$
\begin{align}
G_{ij} &= \pBB{\frac{1}{\ba^* \ba - w - \ba^*  B (B^*  B - w)^{-1} B^* \ba}}_{ij}
= \pBB{ {1 \over \ba^* \ba -w - \ba^* (1+ w ( B B^*   - w)^{-1} )\,  \ba}}_{ij} \;\non \\
&= \pBB{ {1 \over - w - w\, \ba^*  \cal G^{(A, \emptyset)}  \,  \ba} }_{ij}\;,
\quad  \cal G^{(A, \emptyset)}  =   ( B B^*   - w)^{-1}.   \label{eqn:G11fin}
\end{align}

Rewrite
$$
I + \ba^* \cal G^{(A, \emptyset)}   \,  \ba
 = \al (I+ R),
\quad R :=  \al^{-1} \left( \ba^* \cal G^{(A, \emptyset)}  \,  \ba +I-\al I\right)
$$
where
$$
 \al:= \left(N^{-1} \sum_{j= 1}^N \cal G^{(A, \emptyset)}_{jj}+|z|^2  \frac{-1}{w(1+m_\mG^{(A,\emptyset)}  )}+1\right) = m_\mG^{(A,\emptyset)}  -\frac{|z|^2}{w (1+m_\mG^{(A,\emptyset)} ) }+1
 $$
{We will prove  $\|R\|\ll 1$ with high probability. }
Using \eqref{defmc1}, $\Lambda\ll m_{\rm c}$ \eqref{res:wemplnew} and \eqref{37mk},  we have
$$
\al \sim w^{-1/2}, \quad {\rm in }\; \Xi
$$
By  \eqref{559c1}, \eqref{559c3} and \eqref{37mk}, we have
$$
 \al R_{ii}=  (1-\E_{\by_i})\by_i^*  \cal G^{(A,  \emptyset)}  \by_i+|z|^2\left(\cal G^{(A, \emptyset)}_{ii}- \frac{-1}{w(1+m_\mG^{(A,\emptyset)}  )}\right)=O( \varphi^{D_\zeta}\wt \Psi), \quad {\rm in }\; \Xi,
$$
$$
\al  R_{ij}=    \by_i^*  \cal G^{(A, \emptyset)}  \by_j\leq O( \varphi^{D_\zeta}\wt \Psi), \quad {\rm in }\; \Xi.
 $$

 Therefore, we  have the bound
\be\label{Z5}
\| {\bf 1} (\Xi)  R \| =O(  \varphi^{D_\zeta} \wt \Psi \al^{-1})=O(\varphi^{D_\zeta}  |w|^{ 1/2}\wt \Psi)\ll1,
\quad
\| {\bf 1} (\Xi)   R^k \|= O(  \varphi^{D_\zeta} \wt \Psi \al^{-1})^k |A|^{k-1},\quad k=1,2,\dots
\ee
 With \eqref{eqn:G11fin} and the definition of $R$,  we have  $-w  \al  G_{ij} = [(I+R)^{-1} ]_{ij}$ for $i,j\in A$. Therefore,
$$
-w G_{ii} \al = [(I+R)^{-1} ]_{ii}
=1+  \sum_{j=1}^{|A|-1}  ((-R)^j)_{ii}
+\al w\sum_{j\in A} ((-R)^{|A|})_{ij}G_{ji}
$$
  Then, together with \eqref{Z5}, \eqref{res:wemplnew} and $m_c\sim |w^{-1/2}|\sim \al$, we
  have thus proved that, in $\Xi$,
$$
 -w G_{ii} \al = 1+  \sum_{j=1}^{{|A|} -1}   (R^j)_{ii}
+ \OO \left( {|A|}\varphi^{D_\zeta}  |w|^{1/2}\wt \Psi  \right )^{ {|A|} } , \quad {\rm in }\; \Xi
$$
Thus
\begin{align}\label{R3}
 \frac{-1}{wG_{ii} }  &=    \al  U_{A} +   \OO(|w|^{-1/2} ( |A|^2\varphi^{D_\zeta} |w|^{1/2}\wt \Psi)^{{|A|} })
 \\\nonumber&= \al  U_{A} +   \OO(|w|^{-1/2} ( |A| \varphi^{ D_\zeta+2\zeta} |w|^{1/2}\wt \Psi)^{{|A|} }) , \quad {\rm in }\; \Xi
 \end{align}
where we used $|A|\leq p\leq \varphi^{2\zeta}$ and $U_{A} $ is a linear combination of the following products of $(R^j)_{ii}$'s
$$
 \prod_k (R^{j_k})_{ii},\quad \quad\quad 0\leq  \sum_k j_k\leq {|A|} -1.
$$
Notice  we have
\be\label{88}
\cal Q_A\left( \prod_k \al (R^{j_k})_{ii}\right)= 0 ,\quad \quad\quad
\ee
provided that $0\leq  \sum_k j_k\leq {|A|} -1$.
This is because that $\al$ is independent of $\{\by_k: k\in A\} $ and  $R_{ab}$  is independent of
$\{\by_k: k\in A, k\neq a,b\}$. Hence there exists $ \ell  \in A$ such that   $\by_\ell$  does not appear in $\prod_k \al (R^{j_k})_{ii}$
and this proves \eqref{88}. Therefore, we have proved that
\be\label{nyyy}
\cal Q_A \al U_{A}=0.
\ee

Define $\Omega_A$ as the probability space for the columns $\{\by_k: k\in A\} $ and  $\Omega_{A^c}$  the one for the columns $\{\by_k: k\in A^c\} $.  Then the full probability  space $\Omega$ equals to $
\Omega= \Omega_A\times \Omega_{A^c}$.
 Define  $\pi_{A^c}$ to be  the projection  onto    $\Omega_{A^c}$ and  $\Xi^*=\left( \pi^{-1}_{A^c}\cdot \pi_{A^c}\cdot  \Xi \right)$.   Then
${\bf 1}(  \Xi ^*)$ is independent of  $\{\by_k: k\in A\} $.      Hence we can extend \eqref{nyyy} to
$$
\cal Q_A  {\bf 1}(  \Xi ^*)\al U_{A}= 0.  \quad
$$
 Let
$$
   \wt { { \cal Z}}_{i, A}=\left(wG_{ii}\right)^{-1} + {\bf 1}(\Xi^* \setminus \Xi) \al U_{A}, \quad
 \cal Z_{i, A} =  \cal Q_{A}  {\bf 1}( \Xi )  \left [ \left(  wG_{ii}\right)^{-1}  + \al U_{A} \right ]
$$
so that \eqref{511} is satisfied, i.e.,
\begin{align*}
& \cal Z_{i, A} +  \cal Q_{A} {\bf 1}( \Xi^c )  \wt { { \cal Z}}_{i, A}\\
 & = \cal Q_{A}  {\bf 1}( \Xi )  \left [ \left(  wG_{ii}\right)^{-1}  + \al U_{A} \right ]
+  \cal Q_{A} {\bf 1}( \Xi^c )  \left [  \left(wG_{ii}\right)^{-1} + {\bf 1}(\Xi^* \setminus \Xi) \al U_{A} \right ] \\
& = \left( \cal Q_{A}  wG_{ii}\right)^{-1}  +  \cal Q_{A}  \left [  {\bf 1}( \Xi )  \al U_{A}
+   {\bf 1}( \Xi^c )   {\bf 1}(\Xi^* \setminus \Xi) \al U_{A} \right ]  \\
 & = \left( \cal Q_{A}  wG_{ii}\right)^{-1}  +  \cal Q_{A}  \left [  {\bf 1}( \Xi )  \al U_{A}
+   {\bf 1}(\Xi^* \setminus \Xi) \al U_{A} \right ]  = \left( \cal Q_{A}  wG_{ii}\right)^{-1}.
\end{align*}
 By \eqref{R3}, $ |\cal Z_{i, A}| \le  \OO(|w|^{-1/2} ( |A| \varphi^{ D_\zeta+2\zeta} |w|^{1/2}\wt \Psi)^{{|A|} })$ in  $\Xi$.
We now prove that
\be\label{tqdm}
\wt {{ \cal Z}}_{i, A}= \left(wG_{ii}\right)^{-1}+ {\bf 1}(\Xi^* \setminus \Xi) \al U_{A}\leq N^{C|A|} .
\ee
By  \eqref{bdl}, we have $\left(wG_{ii}\right)^{-1}=\OO(N^C)$.
Notice that $\al$ is independent of
$\{\by_k: k\in A\} $. Since $\al\sim |w^{-1/2}|$ in $\Xi$,  the same asymptotic  holds in $\Xi^*\setminus \Xi$.
By definitions of $U_{A}$ \eqref{R3} and $R$, and the assumption $X_{ij}=O(N^{C})$, we obtain \eqref{tqdm}
and this  completes the proof of \eqref{511new}.
Similarly, we can prove \eqref{511Lwu}   and this completes the proof of Lemma \ref{lem:32y}.

  \appendix

 \section{Proof of the properties of $m_{\rm c}$ and $\rho_{\rm c}$}\label{app:macro}

  In this appendix we are going to prove the lemma \ref{pmcc1}, \ref{pmcc12} and \ref{pmcc12.5}.  We can solve $m_{\rm c}$ explicitly by the following formula.

 \begin{lemma} [Explicit expression of $m_{\rm c}$] \label{lem: eemc}
 For any $E\in\mathbb{R}$, let
$$
A_\pm:=A_\pm(E,z):=2E^{3/2}-9E^{1/2}(1+2|z|^2)\pm 6\sqrt3  |z| \sqrt{((\lambda_+-E)(E-\lambda_-))_+}.
$$
Then we have
 \be\label{mc=}
 \lim_{\eta\to 0^+}m_{\rm c}(E+\ii\eta , z)=-\frac23-\frac{1}{2^{1/3} 3\sqrt{E }}\left(
 \frac{1- \sqrt3 \ii}{ 2} A_+^{1/3}(E,z)
 + \frac{1+\sqrt3 \ii}{ 2} A_-^{1/3}(E,z)
 \right),
   \ee
where we note $x^{1/3}=\sgn(x)|x^{1/3}|$.
Moreover,  for  general $w\in \C$, $m_{\rm c}(w, z)$ is the analytic extension of $ \lim_{\eta\to 0^+}m_{\rm c}(E+\ii\eta , z) $.
 \end{lemma}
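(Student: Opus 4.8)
The starting point is the self-consistent equation \eqref{defmc1}. Multiplying through by $m_{\rm c}(1+m_{\rm c})$ turns it into the cubic
\[
 w\,m_{\rm c}(1+m_{\rm c})^{2} + m_{\rm c}(1-|z|^{2}) + 1 \;=\; 0 ,
\]
i.e. $w\,m_{\rm c}^{3} + 2w\,m_{\rm c}^{2} + (w+1-|z|^{2})\,m_{\rm c} + 1 = 0$ (the polynomial $P_{w,z}$ from the proof of Lemma \ref{lem:fm}). Dividing by $w$ for $w\neq0$ and substituting $m_{\rm c}=t-\tfrac23$ to eliminate the quadratic term, I get a depressed cubic $t^{3}+p(w,z)\,t+q(w,z)=0$ with the explicit coefficients $p=-\tfrac13+\frac{1-|z|^2}{w}$ and $q=-\tfrac{2}{27}+\frac{1+2|z|^2}{3w}$. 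Cardano's formula then expresses the three roots as $t_{k}=\zeta^{k}u+\zeta^{-k}v$, $k=0,1,2$, where $\zeta=e^{2\pi\ii/3}$, $uv=-p/3$, and $u^{3},v^{3}$ are the two roots of the resolvent quadratic $X^{2}+qX-p^{3}/27=0$.

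The main computational step is to identify these radicals with $A_{\pm}$. Specializing to $w=E>0$, one computes directly $u^{3}+v^{3}=-q$ and $A_{+}+A_{-}=-54E^{3/2}q$, so $u^{3}+v^{3}=(A_{+}+A_{-})/(54E^{3/2})$; and, using the definitions $\lambda_\pm=\frac{(\al\pm3)^3}{8(\al\pm1)}$, $\al=\sqrt{1+8|z|^2}$ to evaluate $\lambda_{+}+\lambda_{-}$ and $\lambda_{+}\lambda_{-}$, one checks $A_{+}A_{-}=4\bigl(E-3+3|z|^{2}\bigr)^{3}$, which equals $(54E^{3/2})^{2}(-p/3)^{3}=(54E^{3/2})^{2}u^{3}v^{3}$. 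Hence one may take $u^{3}=A_{+}/(54E^{3/2})$, $v^{3}=A_{-}/(54E^{3/2})$, i.e. $u=\frac{A_{+}^{1/3}}{2^{1/3}3\sqrt{E}}$, $v=\frac{A_{-}^{1/3}}{2^{1/3}3\sqrt{E}}$, so that, using $\zeta=-\frac{1-\sqrt3\,\ii}{2}$ and $\zeta^{-1}=-\frac{1+\sqrt3\,\ii}{2}$, the root $t_{1}=\zeta u+\zeta^{-1}v$ is exactly the combination appearing inside the bracket of \eqref{mc=}; adding back the shift $-\tfrac23$ yields the right-hand side of \eqref{mc=}. Throughout, the convention $x^{1/3}=\sgn(x)|x|^{1/3}$ on the real locus together with analytic continuation off $[\max\{0,\lambda_-\},\lambda_+]$ fixes the branches of $A_\pm^{1/3}$, and one must verify this choice is compatible with the constraint $uv=-p/3$.

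It then remains to see that this particular root is $m_{\rm c}$. For $\im w>0$ the cubic has, by the uniqueness recalled from \cite{GotTik2010}, exactly one root with positive imaginary part, namely the one satisfying $m_{\rm c}(w,z)=-w^{-1}+\OO(w^{-2})$ as $w\to\infty$. The branch $t_{1}-\tfrac23$ selected above is analytic on $\C\setminus[\max\{0,\lambda_-\},\lambda_+]$, and a direct expansion shows it has precisely this large-$w$ behaviour; by the identity theorem it therefore coincides with $m_{\rm c}(w,z)$ everywhere off the support, and letting $\eta\to0^{+}$ gives \eqref{mc=}. The analytic-extension statement for general $w$ is then automatic, since $m_{\rm c}(\cdot,z)$ is the Stieltjes transform of the compactly supported density $\rho_{\rm c}$ and hence analytic on $\C\setminus[\max\{0,\lambda_-\},\lambda_+]$. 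The step I expect to be the main obstacle is the bookkeeping of the cube-root branches of $A_{\pm}^{1/3}$: they must be tracked consistently across the regimes $E<\lambda_-$ (when $|z|>1$), $\lambda_-<E<\lambda_+$, $E>\lambda_+$ and $E\le0$ — in particular where $(\lambda_+-E)(E-\lambda_-)$ changes sign and $A_\pm$ pass from real to complex conjugate — and one must confirm that the resulting function has nonnegative imaginary part inside the support and the correct sign of the real part outside it, which is what pins it down among the three roots.
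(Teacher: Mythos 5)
Your proposal is correct and follows essentially the same route as the paper's (very terse) proof: the paper simply asserts that solving the cubic $P_{w,z}(m_{\rm c})=0$ arising from \eqref{defmc1} and selecting the unique root with positive imaginary part gives \eqref{mc=}, and you carry out exactly this computation via Cardano, correctly identifying $u^3=A_+/(54E^{3/2})$, $v^3=A_-/(54E^{3/2})$ with the resolvent quadratic through the verifiable identities $A_++A_-=-54E^{3/2}q$ and $A_+A_-=4(E-3+3|z|^2)^3$. The only cosmetic difference is that you pin down the correct root by the large-$w$ Stieltjes asymptotic $m_{\rm c}(w)\sim -w^{-1}$ rather than by the sign of $\im m_{\rm c}$ directly, which is equivalent.
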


\begin{proof}[Proof of Lemma \ref{lem: eemc}] By definition, $m_{\rm c}$ is an analytic function, so we only need to prove \eqref{mc=}. By definition, $m_{\rm c}$ is one of  the three solutions of \eqref{defmc1}, and needs to have positive imaginary part. Solving explicitly this degree three polynomial equation proves that there is just one such solution, with the limit
\ref{mc=} close to the critical axis.
\end{proof}

Since $\rho_{\rm c} (E)= \frac1\pi\im m_{\rm c}(E+\ii 0^+)$, by  \eqref{mc=} and  $A_+\geq A_-$, we have:  for $0\leq E\leq \lambda_+$,
 \begin{equation}\label{rhoc=}
 \rho_{\rm c}(E, z)=\frac{1}{ 2^{4/3} 3^{1/2}\pi \sqrt{E}}\left(A_+^{1/3}-A_-^{1/3}\right)\geq 0
 \end{equation}

With  Lemma \ref{lem: eemc} and \eqref{rhoc=}, one can easily prove Proposition \ref{prorhoc}.\\

\begin{proof}[Proof of Lemma \ref{pmcc1}] By definition,
\begin{equation}\label{eqn:realPart}
\re m_{\rm c}(w, z)=\int_\R \frac{\rho_{\rm c} (x, z)(x-E) }{(x-E)^2+\eta^2} dx
\end{equation}
so for the first case this implies
$$
 0>\re m_{\rm c}(w, z)\geq
\int \frac{\rho_{\rm c} (x, z) }{x-E } \rd x.
$$

Moreover, recall that $\al=\sqrt{1+8|z|^2}$, so (still in the first case)
$$
0\geq \int \frac{\rho_{\rm c} (x, z) }{x-E } dx\geq \int \frac{\rho_{\rm c} (x, z) }{x-\lambda_+ } dx=m_{\rm c}(\lambda_+, z)=\frac{-2}{\al+3}\geq \frac{-1}{2}.
$$
We also have easily $|m_{\rm c}|\sim 1$ easily from (\ref{eqn:realPart}), we therefore obtained
the l.h.s. of \eqref{A17}.
Similarly, one can prove $\im m_{\rm c}\sim \eta$ thanks to
 $$
\im m_{\rm c}(w, z)=\eta\int_\R \frac{\rho_{\rm c} (x, z)  }{(x-E)^2+\eta^2} \rd x
$$
and complete the proof for the first case.

 For the second case,  it is easy to prove
\eqref{A18} when $w=\lambda_+$, as we did from an explicit calculation.  Then one obtains
\eqref{A18} by expanding $m_{\rm c}$ around $m_{\rm c}(\lambda_+, z)$, using \eqref{defmc1}.
The estimate \eqref{esmallfake} directly follows from \eqref{A18}.

Similarly, for the third case, first $m_{\rm c}=\infty$, i.e., $m_{\rm c}^{-1}=0$ when $w=0$, then   one can easily obtain \eqref{A20} in case 3 by solving \eqref{defmc1} with expanding $m_{\rm c}^{-1}$ around $(m_{\rm c}(0, z))^{-1}$. The estimate \eqref{A20a} directly follows from \eqref{A20}.
The fourth case follows from
\be\label{aa26}
m_{\rm c}(w, z)=\int \frac{\rho_{\rm c} (x, z)}{x-w} \rd x
\ee
and the properties of $\rho$ stated in proposition \ref{prorhoc}.
\end{proof}

\begin{proof}[Proof of Lemma \ref{pmcc12}] This is similar to the proof of Lemma  \ref{pmcc1}.
\end{proof}

\begin{proof}[Proof of Lemma \ref{pmcc12.5}]  We are going to  prove this lemma in   the case $|z|\leq 1-\tau$, the other cases  can be proved similarly.  Note first that \eqref{dA2} is a consequence of all possible cases in Lemma \ref{pmcc1}.

We now prove \eqref{26ssa} in the four different cases, which have been classified in Lemma \ref{pmcc1}.  In   the first case, if
additionally $\eta\sim 1 $, as $0>\re(m_{\rm c})>-1/2$,  the l.h.s. in \eqref{26ssa} is bounded by $\OO(1)$, which implies \eqref{26ssa}.  For  the first case if $\eta  $ is small enough, since $|\re w|\sim (1+m_{\rm c})\sim  1$  and $|\im (m_{\rm c})| \sim\eta$, so
\be\label{zgzgt}
  \im \frac{1}{ w(1+m_{\rm c})}\leq C\ |\im (w(m_{\rm c}+1))|\leq C\im m_{\rm c}
\ee
which gives \eqref{26ssa} in the first case. In the same way we get \eqref{26ssa} in the second case, where   $\im m_{\rm c}\geq c\eta$. For the third case, using \eqref{A20}, one can easily prove \eqref{26ssa}.
Finally,  the fourth case  is simple since the l.h.s. in \eqref{26ssa} is clearly $\OO(1)$.

We now prove \eqref{ny27}.  Using \eqref{A20a} and \eqref{A21}, ($\al=\sqrt{1+8|z|^2} $ is a real number) we  have that,  in the cases  three and four,
 \be\label{A31}
  \left|(-1 + |z^2|)
   \left( m_{\rm c}-\frac{-2}{3+\al}\right)  \left( m_{\rm c}-\frac{-2}{3-\al}\right)\right| \geq C |\im m_{\rm c}|^2\ge
C\, |w|^{-1}
 \ee
 For case two, using \eqref{A18},
  \be\label{slzts}
  \left|(-1 + |z^2|)
   \left( m_{\rm c}-\frac{-2}{3+\al}\right)  \left( m_{\rm c}-\frac{-2}{3-\al}\right)\right|\geq
    C \left|  m_{\rm c}-\frac{-2}{3+\al}\right | \geq C\/\left|\frac{\sqrt{\kappa+\eta}}{w}\right|
 \ee
 Note $m_{\rm c}(\lambda_+)= - 2/(3+\al) $.  For case one, with \eqref{aa26}, it is easy to prove that either $\im m_{\rm c}\sim 1$ or $\re  m_{\rm c}-m_{\rm c}(\lambda_+)=\re m_{\rm c}+ 2/(3+\al) \sim 1$. It implies that $\left|  m_{\rm c}-\frac{-2}{3+\al}\right |\sim 1$. This completes the proof.
 \end{proof}

\section{Perturbation theorem}
In this section, we introduce the theorem on the relations between the  Green function  $G$   of  the matrix  $H$ and the  Green function of the minor of the   matrix. This theorem was proved in \cite{ErdYauYin2010PTRF}.  We first introduce some notations (here we use $[]$ instead of $()$ in \cite{ErdYauYin2010PTRF}, since upper index $()$ has been used in the main part of the paper).
  \begin{definition}\label{basicd}

Let $H$  be $N\times N$ matrix,  ${\T} \subset \llbracket 1, N\rrbracket$  and
 $H^{[\bT]}$ be the $N-|\T|$ by $N-|\T|$ minor of $H$ after removing the
 $i$-th   rows and columns index by $i\in \T$.   For $\bT=\emptyset$, we define $H^{(\emptyset)}=H$.
   For any ${\T}\subset \llbracket 1, N\rrbracket$ we introduce the following notations:
 \begin{align}
 G^{[{\T}]}_{ij}:=&(H^{[{\T}]}-w)^{-1}(i,j) ,\qquad i,j\not\in\T\non\\
 Z^{[{\T}]}_{ij}:=& =\sum_{k,\ell\notin {\T}}
 h_{ik}
 G^{[{\T}]}_{k \ell}h_{\ell j\,} \non \\
\wH^{[{\T}]}_{ij}:= & h_{ij}-w\delta_{ij}-Z^{[{\T}]}_{ij}.
 \end{align}
\end{definition}

The following  formulas  were proved in
  Lemma 4.2  from \cite{ErdYauYin2010PTRF}.

\begin{lemma}[Self-consistent perturbation formulas] \label{basicIG} Let
$\bT\subset \llbracket 1, N\rrbracket$. For  simplicity, we use the
 notation $[i \,{\T}]$ for $[\{i\}\cup {\T}]$ and $[i j \,{\T}]$
 for $[\{i,j\}\cup {\T}]$.
 Then we have the following identities:
\par\begin{enumerate}
\item For any  $i\notin {\T}$
\be\label{1}
 G^{[{\T}]}_{ii}=(\wH^{[i\,{\T}]}_{ii})^{-1}.
 \ee
 \item For $i\neq j$ and $i,j\notin {\T}$
\be\label{2}
 G^{[{\T}]}_{ij}=-G^{[{\T}]}_{jj}G_{ii}^{[j\,{\T}]}\wH^{[ij\,\,{\T}]}_{ij}=
-G^{[{\T}]}_{ii}G_{jj}^{[i\,{\T}]}\wH^{[ij\,\,{\T}]}_{ij}.
 \ee
   \item  For any indices
  $i,j,k \notin {\T}$ with $k \not \in \{i , j\}$  (but $i = j$ is allowed)
 \be\label{3}
G^{[{\T}]}_{ij}-G^{[k\,\,{\T}]}_{ij}=G^{[{\T}]}_{ik}G^{[{\T}]}_{kj}
(G^{[{\T}]}_{kk})^{-1} . \ee
 \end{enumerate}
 \end{lemma}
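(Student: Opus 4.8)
The plan is to reduce all three identities to the block‑matrix (Schur complement) inversion formula together with careful index bookkeeping. By replacing $H$ with its minor $H^{[\T]}$ (keeping $w$ fixed), it suffices to prove the three statements for $\T=\emptyset$: the minors $H^{[k\T]}$, $H^{[ij\T]}$ of $H$ are precisely the minors $H^{[k]}$, $H^{[ij]}$ of $H^{[\T]}$, so the general case follows verbatim. Throughout set $A:=H-wI$, so that $G=A^{-1}$ and $G^{[\T']}=(H^{[\T']}-wI)^{-1}$; since $\im w\neq 0$ all these matrices and all Schur complements appearing below are invertible.

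For \eqref{1}, fix $i$ and split the index set as $\{i\}\sqcup\{i\}^c$. In the corresponding $2\times2$ block form, $A$ has scalar corner $h_{ii}-w$, row $(h_{ik})_{k\neq i}$, column $(h_{ki})_{k\neq i}$, and lower block $H^{[i]}-wI$. The Schur complement formula for the $(i,i)$ entry of the inverse gives
$$G_{ii}=\Big(h_{ii}-w-\sum_{k,\ell\neq i}h_{ik}(H^{[i]}-wI)^{-1}_{k\ell}h_{\ell i}\Big)^{-1}=\big(h_{ii}-w-Z^{[i]}_{ii}\big)^{-1}=(\wH^{[i]}_{ii})^{-1},$$
which is \eqref{1}. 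For \eqref{3}, split instead as $\{k\}^c\sqcup\{k\}$ and use the standard block identities $(A^{-1})_{\{k\}^c\{k\}^c}=P^{-1}+P^{-1}Q\,(A^{-1})_{kk}\,R\,P^{-1}$ and $(A^{-1})_{\{k\}^c,k}=-P^{-1}Q\,(A^{-1})_{kk}$ with $P=H^{[k]}-wI$, $Q=(h_{\ell k})_{\ell\neq k}$, $R=(h_{km})_{m\neq k}$, and $(A^{-1})_{kk}=G_{kk}$ by \eqref{1}. Reading off the $(i,j)$ entry of the first identity and the $(i,k)$, $(k,j)$ entries of the second gives, for $i,j\neq k$,
$$G_{ij}-G^{[k]}_{ij}=G_{kk}\sum_{\ell,m\neq k}G^{[k]}_{i\ell}h_{\ell k}h_{km}G^{[k]}_{mj}=\frac{G_{ik}G_{kj}}{G_{kk}},$$
which is \eqref{3}.

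For \eqref{2}, split as $\{i,j\}\sqcup\{i,j\}^c$. The Schur complement formula shows the $2\times2$ block of $A^{-1}$ indexed by $\{i,j\}$ is the inverse of the matrix with entries $h_{ab}-w\delta_{ab}-\sum_{k,\ell\notin\{i,j\}}h_{ak}(H^{[ij]}-wI)^{-1}_{k\ell}h_{\ell b}=\wH^{[ij]}_{ab}$ for $a,b\in\{i,j\}$. Writing $D_{ij}:=\wH^{[ij]}_{ii}\wH^{[ij]}_{jj}-\wH^{[ij]}_{ij}\wH^{[ij]}_{ji}$ and inverting the $2\times2$ matrix explicitly,
$$G_{ij}=-\wH^{[ij]}_{ij}/D_{ij},\qquad G_{jj}=\wH^{[ij]}_{ii}/D_{ij},\qquad G_{ii}=\wH^{[ij]}_{jj}/D_{ij}.$$
Applying \eqref{1} to $H^{[j]}$ gives $G^{[j]}_{ii}=(\wH^{[ij]}_{ii})^{-1}$, hence $-G_{jj}G^{[j]}_{ii}\wH^{[ij]}_{ij}=G_{ij}$; the second form of \eqref{2} follows the same way from $G_{ii}=\wH^{[ij]}_{jj}/D_{ij}$ and $G^{[i]}_{jj}=(\wH^{[ij]}_{jj})^{-1}$. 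There is no analytic content here; the only point requiring attention is matching the Schur complement expressions to the defined quantities $Z^{[\T]}_{ij}$ and $\wH^{[\T]}_{ij}$ and keeping the auxiliary index sets $\{i\}$, $\{i,j\}$, $\{k\}$ consistently disjoint from $\T$ in every minor — i.e., the main ``obstacle'' is purely notational bookkeeping.
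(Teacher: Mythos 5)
Your proof is correct, and it is essentially the standard argument: the paper does not prove this lemma itself but cites Lemma~4.2 of \cite{ErdYauYin2010PTRF}, where the proof is exactly the Schur complement computation you carry out. The reduction to $\T=\emptyset$ by relabeling $H\to H^{[\T]}$ is valid because the definitions of $Z^{[\T]}_{ij}$ and $\wH^{[\T]}_{ij}$ sum only over indices outside $\T$ and the minor retains the original labels; your block decompositions $\{i\}\sqcup\{i\}^c$, $\{i,j\}\sqcup\{i,j\}^c$, $\{k\}^c\sqcup\{k\}$ correctly produce \eqref{1}, \eqref{2}, \eqref{3}. One tiny notational gap: for \eqref{3} you also use $(A^{-1})_{k,\{k\}^c}=-G_{kk}\,R\,P^{-1}$ to extract the $(k,j)$ entry, but you only wrote down the companion identity $(A^{-1})_{\{k\}^c,k}=-P^{-1}Q\,G_{kk}$; the missing one is the obvious transpose-type analogue and does not affect correctness.
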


\section{Large deviation estimates. }
In order to obtain the self-consistent equations for the Green functions, we needed  the following large deviation
estimate.

\begin{lemma}[Large deviation estimate]\label{lem:bh} For any $\zeta>0$, there exists $Q_\zeta>0$ such that  for $\bb T\subset\llbracket 1, N\rrbracket$, $|\bb T| \leq N/2$ the following estimates hold  \hp{\zeta}:
\begin{align}\label{130}
| Z^{(\bb T)}_{i }|=
\left|(1-\E_{ \mathrm y_i})  \left(\mathrm y_i^{(\bb T)} G^{(\bb T,i )}    \mathrm y_i^{(\bb T)*}\right)  \right|
 \leq  \varphi^{  Q_\zeta/2} \sqrt{\frac{\im m_ G^{(\bb T,i )}+ |z|^2 \im G^{(\bb T,i )}_{ii} }{N\eta}}, \\
|\cal Z^{(\bb T)}_{i }| =
 \left|(1-\E_{ \by_i})   \left(\by_i^{(\bb T)*}  \mG^{(i , \bb T )}  \by_i^{(\bb T)} \right) \right|
\leq  \varphi^{  Q_\zeta/2} \sqrt{\frac{\im m_\mG^{(i,\bb T)}+ |z|^2 \im\mG^{(i, \bb T)}_{ii} }{N\eta}}. \non
 \end{align}
 Furthermore, for $i\neq j$, we have
\begin{align}\label{132}
\left|
(1-\E_{\mathrm y_i\mathrm y_j})
\left( \mathrm y_i^{(\bb T)} G^{(\bb T,ij)}   \mathrm y_j^{(\bb T)*}\right)
 \right|
& \leq
  \varphi^{  Q_\zeta/2}
  \sqrt{\frac{\im m_ G^{(\bb T,ij)}
  +|z|^2\im G^{(\bb T,ij)}_{ii}+|z|^2 \im G^{(\bb T,ij)}_{jj}}{N\eta}},
  \\
  \left|
(1-\E_{\by_i\by_j} )
\left(\by_i^{(\bb T)*}   \mG^{(ij,\bb T)}  \by_j^{(\bb T)}\right)
 \right|
& \leq
  \varphi^{  Q_\zeta/2}
  \sqrt{\frac{\im m_\mG^{(ij,\bb T)}
  +|z|^2\im\mG^{(ij,\bb T)}_{ii}+|z|^2 \im\mG^{(ij,\bb T)}_{jj}}{N\eta}}, \label{1321}
 \end{align}
 where
 \be\label{1328}
 \E_{\mathrm y_i\mathrm y_j} \left( \mathrm y_i^{(\bb T)} G^{(\bb T,ij)}   \mathrm y_j^{(\bb T)*}\right)
= |z|^2G^{(\bb T,ij)}_{ij}, \quad
\E_{\by_i\by_j}\left(\by_i^{(\bb T)*}   \mG^{(ij,\bb T)}  \by_j^{(\bb T)}\right)= |z|^2\mG^{(ij,\bb T)}_{ij}.
 \ee
 \end{lemma}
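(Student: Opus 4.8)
The plan is to derive all four estimates from the standard large deviation bounds for linear and bilinear forms in independent random variables with uniform subexponential decay, combined with the elementary Ward-type identities that convert sums of squared Green function entries into imaginary parts of $m$ and of diagonal entries.

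First I would record the abstract large deviation bounds (see e.g. the appendix of \cite{ErdYauYin2010Adv}): if $(a_k)$ are independent, centered, of variance $N^{-1}$, with uniform subexponential decay, and $(b_k),(B_{kl})$ are independent of the $a_k$, then $\bigl|\sum_k a_k b_k\bigr| \prec N^{-1/2}\bigl(\sum_k|b_k|^2\bigr)^{1/2}$, $\bigl|\sum_k(|a_k|^2-N^{-1})B_{kk}\bigr|\prec N^{-1}\bigl(\sum_k|B_{kk}|^2\bigr)^{1/2}$, and $\bigl|\sum_{k\ne l}\bar a_k B_{kl}a_l\bigr|\prec N^{-1}\bigl(\sum_{k\ne l}|B_{kl}|^2\bigr)^{1/2}$. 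Because of \eqref{subexp} each $\prec$ upgrades, after a high-moment estimate and Markov's inequality, to a bound with prefactor $\varphi^{Q_\zeta/2}$ holding $\zeta$-high probability, and this can be made uniform over the $N^{\OO(1)}$ choices of $\bb T$, $i$, $j$ (here $|\bb T|\le N/2$ guarantees that all reduced normalized traces stay comparable). The structural input is that $G^{(\bb T,i)}$ involves none of the entries of the $i$-th row of $Y$, and $G^{(\bb T,ij)}$ involves neither the $i$-th nor the $j$-th row; thus, conditionally on these Green functions, the rows of $Y$ play the role of the independent vectors $(a_k)$.

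Next I would decompose $\mathrm y_i^{(\bb T)} = \ba_i^{(\bb T)} - z\,e_i^{(\bb T)}$, where $\ba_i$ is the $i$-th row of $X$ (centered, entries of variance $N^{-1}$) and $e_i$ is the $i$-th coordinate vector restricted to indices outside $\bb T$. Expanding $\mathrm y_i^{(\bb T)}G^{(\bb T,i)}\mathrm y_i^{(\bb T)*}$ produces a centered quadratic form $\ba_i^{(\bb T)}G^{(\bb T,i)}\ba_i^{(\bb T)*}$, two linear forms $-z\,\ba_i^{(\bb T)}G^{(\bb T,i)}e_i^*$ and $-\bar z\,e_iG^{(\bb T,i)}\ba_i^{(\bb T)*}$, and the deterministic term $|z|^2G^{(\bb T,i)}_{ii}$; taking $\E_{\mathrm y_i}$ identifies the mean as $m_G^{(\bb T,i)}+|z|^2G^{(\bb T,i)}_{ii}$, and the same computation in the $i\ne j$ case gives \eqref{1328} (only the $|z|^2G_{ij}$ term survives since $\E_{\mathrm y_i\mathrm y_j}[\ba_iG\ba_j^*]=0$). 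Applying the three abstract bounds to the three non-deterministic pieces and using $\sum_{k,l}|G^{(\bb T,i)}_{kl}|^2 = \tr\bigl(G^{(\bb T,i)}(G^{(\bb T,i)})^*\bigr) = \eta^{-1}\im\tr G^{(\bb T,i)} = N\eta^{-1}\im m_G^{(\bb T,i)}$ for the quadratic part and $\sum_k|G^{(\bb T,i)}_{ki}|^2 = [G^{(\bb T,i)}(G^{(\bb T,i)})^*]_{ii} = \eta^{-1}\im G^{(\bb T,i)}_{ii}$ for the two linear parts yields exactly $|Z_i^{(\bb T)}|\prec\sqrt{(\im m_G^{(\bb T,i)}+|z|^2\im G^{(\bb T,i)}_{ii})/(N\eta)}$, hence \eqref{130}. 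The bilinear estimate \eqref{132} is identical, now expanding both $\mathrm y_i^{(\bb T)}$ and $\mathrm y_j^{(\bb T)}$: the centered bilinear form contributes $\sqrt{\im m_G^{(\bb T,ij)}/(N\eta)}$ while the two cross-linear terms contribute $|z|\sqrt{\im G^{(\bb T,ij)}_{jj}/(N\eta)}$ and $|z|\sqrt{\im G^{(\bb T,ij)}_{ii}/(N\eta)}$. The $\mG$/$\cal Z$ versions (the second displays of \eqref{130} and \eqref{132}, i.e. \eqref{1321}) follow by the row–column reflection symmetry $G,Y,z,\by_i \leftrightarrow \mG,Y^*,z^*,\mathrm y_i$.

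The argument is mostly bookkeeping, and I expect the only genuinely delicate point to be the passage from the $\prec$-statements to the uniform $\zeta$-high-probability bounds with the explicit $\varphi^{Q_\zeta/2}$ prefactor, which is exactly where the uniform subexponential decay \eqref{subexp} is used (high-moment estimate, Markov's inequality, union bound over polynomially many index sets); a secondary but routine care point is to keep track of which reduced Green function is independent of which row so that the conditioning is legitimate.
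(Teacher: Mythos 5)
Your proof takes essentially the same route as the paper's: decompose $\mathrm y_i^{(\bb T)}$ into the $i$-th row of $X$ and the deterministic $-z e_i$ part, apply the linear and quadratic large deviation estimates (Lemma~\ref{LD}) to the resulting centered pieces, convert $\sum_{k,l}|G^{(\bb T,i)}_{kl}|^2$ and $\sum_k|G^{(\bb T,i)}_{ik}|^2$ to imaginary parts via the Ward identities, and obtain the $\mG$ versions by row--column reflection symmetry. The paper's proof is precisely this computation (equation~\eqref{j1} and the two displays following it), and your handling of the $i\ne j$ bilinear case and of \eqref{1328} matches what the paper leaves as "similar arguments."
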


We first recall the following  large deviation
estimates concerning independent random variables, which were proved in
Appendix B of \cite{ErdYauYin2010PTRF}.

\begin{lemma}\label{LD}
Let $a_i$ ($1\leq i\leq N$) be independent complex random  variables with mean zero,
variance $\sigma^2$  and having a uniform  subexponential decay
$$
\P(|a_{i}|\geq x \sigma)\leq \ttau^{-1} \exp{\big( - x^\ttau\big)}, \qquad \forall \; x\ge 1,
$$
with some $\ttau>0$.
Let $A_i$, $B_{ij}\in \C$ ($1\leq i,j\leq N$).
Then   there exists a constant $0< \phi<1$, depending on $\ttau $,
 such that for any $\xi > 1$
we have
 \begin{align}
\P\left\{\left|\sum_{i=1}^N a_iA_i\right|\geq  (\log N)^{\xi
}
 \sigma \,\Big(\sum_{i}|A_i|^2\Big)^{1/2}\right\}\leq &\; \exp{\big[-(\log N)^{\phi \xi
  } \big]},
\label{resgenHWTD} \\
\P\left\{\left|\sum_{i=1}^N\overline a_iB_{ii}a_i-\sum_{i=1}^N\sigma^2 B_{ii}\right|\geq
(\log N)^{\xi
} \sigma^2 \Big( \sum_{i=1}^N|B_{ii}|^2\Big)^{1/2}\right\}\leq &\;
  \exp{\big[-(\log N)^{\phi \xi
 } \big]},\label{diaglde}\\
\P\left\{\left|\sum_{i\neq j}\overline a_iB_{ij}a_j\right|\geq (\log N)^{\xi
} \sigma^2
\Big(\sum_{i\ne j} |B_{ij}|^2 \Big)^{1/2}\right\}\leq & \; \exp{\big[-(\log N)^{\phi \xi
 } \big]}
 \label{resgenHWTO}
\end{align}
for any sufficiently large $N\ge N_0$, where $N_0=N_0(\ttau)$ depends on  $\ttau$.
\end{lemma}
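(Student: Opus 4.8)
The plan is to reduce all three estimates to moment bounds for sums of \emph{independent}, truncated random variables and then apply Markov's inequality with a moment order that is a small power of $\log N$. First I would normalise: by homogeneity take $\sigma=1$, and rescale the coefficients so that $\sum_i|A_i|^2=1$ in the first bound and $\sum_{ij}|B_{ij}|^2=1$ in the other two. Splitting each $a_i$ into real and imaginary parts (both centred, both with subexponential decay) and using the triangle inequality, it suffices to treat real $a_i$. Then fix a truncation level $L=(\log N)^{\beta\xi}$ with $\beta=\beta(\ttau)$ large, and replace $a_i$ by $a_i^\flat:=a_i\mathbf 1(|a_i|\le L)-\E[a_i\mathbf 1(|a_i|\le L)]$. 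The event $\{\max_i|a_i|>L\}$ has probability at most $N\ttau^{-1}e^{-L^\ttau}\le\exp(-(\log N)^{2\xi})$ once $\beta$ is large; on its complement every sum equals its $a^\flat$-version up to a correction of size $\OO(e^{-cL^\ttau})$, since the recentering shifts and the change in $\E|a_i^\flat|^2=1+\OO(e^{-cL^\ttau})$ are that small. So it is enough to prove the three inequalities for the bounded centred variables $a_i^\flat$, with $|a_i^\flat|\le 2L$ and $\E|a_i^\flat|^2\le 2$.

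For the linear form $\sum_i a_i^\flat A_i$ I would use Markov with the $2p$-th moment: expanding and using independence and mean zero, only multi-indices in which every value occurs at least twice contribute; the perfect-matching terms give at most $(2p-1)!!\,(\sum_i|A_i|^2)^p\le (Cp)^p$, while a block of $k\ge 3$ equal indices costs a factor $\E|a_i^\flat|^k\le(2L)^{k-2}$ (with correspondingly fewer such terms), so for $2p=(\log N)^{\phi\xi}$ with $\phi=\phi(\ttau)$ small enough relative to $\beta$ the matching contribution dominates and $\E|\sum_i a_i^\flat A_i|^{2p}\le (C\log N)^{\phi\xi p}$; Markov at level $(\log N)^\xi$ then yields $\exp(-(\log N)^{\phi\xi})$. (Equivalently this is a Bernstein-type inequality for a sum of independent $\ttau$-subexponential variables with $\ell^2$-normalised coefficients bounded by $1$.) The diagonal quadratic form is the same argument applied to the independent centred variables $B_{ii}(|a_i^\flat|^2-1)$, noting that $|a_i|^2$ has subexponential decay with exponent $\ttau/2$; this only forces $\phi$ to be taken smaller.

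The off-diagonal (Hanson--Wright type) form $\sum_{i\neq j}\bar a_i B_{ij}a_j$ is the heart of the matter. I would first decouple: by a standard decoupling inequality for quadratic chaos, the $2p$-th moment of $\sum_{i\neq j}\bar a_i B_{ij}a_j$ is comparable to that of $\sum_{i,j}\bar a_i B_{ij}a'_j$ with $(a'_j)$ an independent copy of $(a_j)$ (the diagonal $i=j$ of the decoupled sum is mean zero coordinatewise and is disposed of separately). Conditioning on $(a_i)$, the decoupled sum is a linear form $\sum_j C_j a'_j$ in $a'$ with random coefficients $C_j=\sum_i\bar a_i B_{ij}$, so the linear bound applies provided one controls $\|C\|_2^2=\sum_{i,i'}\bar a_i a_{i'}(BB^*)_{ii'}$; but this is again a quadratic form in $a$, with Hilbert--Schmidt norm $\le 1$, which one estimates by the diagonal and (recursively) off-diagonal bounds already in hand, losing only a bounded factor at each of the $\OO(\log\log N)$ rounds needed to bring the moment order down to a power of $\log N$. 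Equivalently, one can run a single direct ``graphical'' expansion of $\E|\sum_{i\neq j}\bar a_i B_{ij}a_j|^{2p}$, in which only index graphs with all multiplicities $\ge 2$ survive and the dominant graphs contribute $(Cp)^{2p}(\sum_{ij}|B_{ij}|^2)^p$ while over-clustered graphs are suppressed by powers of $L$. I expect the genuine work to be in this last step: balancing the truncation level $L$, the moment order $p$, and the combinatorial overcounting so that the resulting exponent is a true positive power $(\log N)^{\phi\xi}$ with $\phi\in(0,1)$ depending only on $\ttau$; once this balance is fixed, the linear and diagonal estimates are immediate by comparison, and combining with the $\exp(-(\log N)^{2\xi})$ bound on the truncation event completes the proof.
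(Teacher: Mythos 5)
The paper does not contain a proof of this lemma; it cites Appendix B of Erd\H{o}s--Yau--Yin \cite{ErdYauYin2010PTRF}, where the argument is indeed a direct moment--Markov computation. Your overall strategy (normalise, handle a subexponential tail event, bound a high moment, apply Markov at a polylogarithmic moment order; reduce the diagonal quadratic form to the linear form via $B_{ii}(|a_i^\flat|^2-1)$ with $\ttau/2$-subexponential $|a_i|^2$) is therefore the right one, and your off-diagonal treatment via decoupling and recursion is a legitimately different route, in the Hanson--Wright spirit rather than a single graphical expansion; you correctly flag that all the real work is there.

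There is, however, a concrete error in the parameter bookkeeping for the linear form that would sink the whole argument for small $\ttau$. You truncate at $L=(\log N)^{\beta\xi}$ with $\beta(\ttau)$ ``large'', and then bound $\E|a_i^\flat|^k\leq (2L)^{k-2}$, claiming that the perfect matchings dominate when $\phi$ is taken ``small relative to $\beta$''. This is backwards. With that moment bound, the single-block term of size $2p$ contributes $\sim (2L)^{2p-2}=\exp\big(2p\,\beta\xi\log\log N\big)$, whereas the target $(C\log N)^{\phi\xi p}=\exp\big(p\,\phi\xi\log\log N+\OO(p)\big)$; matching dominance therefore requires $\beta\leq\phi/2$, i.e.\ $\phi$ must be taken \emph{large} relative to $\beta$, not small. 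Simultaneously the tail event forces $\beta\geq 1/(\xi\ttau)$ (to make $N\ttau^{-1}e^{-L^\ttau}$ negligible), so one needs $\phi\geq 2/(\xi\ttau)$, which is incompatible with $\phi<1$ for $\ttau\le 2$ and $\xi$ near $1$ --- precisely the regime the lemma must cover. The repair is to stop using the crude boundedness bound $(2L)^{k-2}$ and instead use the moments the subexponential tail actually gives, $\E|a_i|^{k}\leq (Ck)^{k/\ttau}$ for $k\leq 2p\ll L^\ttau$: then the over-clustered graphs (including the single block, which is genuinely dominant for $\ttau<2$ and worst-case $A=e_1$) contribute at most $(Cp)^{2p/\ttau}$, and Markov succeeds precisely when $\phi$ is a small constant multiple of $\ttau$, which is why $\phi$ in the statement depends on $\ttau$ and may be small. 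Truncation should be used only to discard the low-probability event, never to bound moments of order $k\ll L^\ttau$. The same correction is needed inside the off-diagonal recursion, which as sketched inherits the flawed truncated moment bound.
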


\begin{proof}[Proof of Lemma \ref{lem:bh}] We will  only prove the assertion of this lemma concerning
the Green function $G$. Similar statement for $\mG$ can be proved with the row-column symmetry. From now on, we will only prove
all statements concerning $G$ if identical proofs are valid for $\mG$ and we will not repeat this comment.

We first prove   \eqref{130} by writing
\begin{align}\label{j1}
& (1-\E_{\mathrm y_i})\left( \mathrm y^{(\bb T)}_i G^{(\bb T,i)} \mathrm  y^{(\bb T)*}_i\right)
\\\nonumber
= &(1-\E_{\mathrm y_i}) |z|^2  G^{(\bb T, i)}_{ii}
 - (1-\E_{\mathrm y_i})    \sum_k \Big  [  z   G^{(\bb T,i)}_{ik}   X^*_{ik}
 +   z^* X_{ik}  G^{(\bb T, i)}_{ki} \Big ]  +
  (1-\E_{ \mathrm y_i})
   \sum_{j k}    X_{ ij}  G^{(\bb T,i)}_{jk}  X^*_{ki}
\end{align}
with $Y=X-zI$.
Since  $G^{(\bb T, i)}_{ii}$  is independent of $\mathrm y_i$, the first term on the right hand side vanishes.
  For any $\zeta>0$, we apply \eqref{diaglde} and \eqref{resgenHWTO} in Lemma \ref{LD} with $\phi \xi = \zeta \log \log N$.  Denote  $\xi = Q_\zeta/2$
and  the last term in \eqref{j1} is bounded by
$$
\varphi^{Q_\zeta /2 }\sqrt { N^{-2}  \sum_{jk} | G^{(\bb T,i)}_{jk}|^2 }
\le \varphi^{Q_\zeta /2 } \sqrt{\frac{ \im  m_ G^{(\bb T,i)} }{N\eta}}
$$
\hp{\zeta}. \nc  Similarly, with \eqref{resgenHWTD}, the second term on the right hand side is bounded by
$$
\varphi^{  Q_\zeta/2} |z| \sqrt {  N^{-1}  \sum_{k }\left(| G^{(\bb T,i )}_{ik}|^2+
| G^{(\bb T,i )}_{ki}|^2\right)} \le \varphi^{  Q_\zeta/2}   \sqrt{ \frac{|z^2|\im  G^{(\bb T,i )}_{ii}  }{N\eta}}
$$
The proofs for the other bounds follow from similar arguments.
\end{proof}

\section{Abstract decoupling lemma}

{ We recall  an abstract cancellation Lemma proved in \cite{PilYin2011}.}

\begin{lemma} \label{abstractZlemma}  Let $\cal I$ be a finite set which may depend on $N$ and
$$
\cal I_i \subset \cal I,\quad 1\leq i\leq N.
$$
Let ${S}_1, \dots, {S}_N$ be random variables which depend on the independent random variables $\{x_\al, \al \in \cal I \}$.
 In application, we often take  $\cal I = \llbracket 1, N\rrbracket$ and $\cal I_i = \{ i \}$.

Recall  $\E_i$ denote  the conditional expectation with  respect to the complement of  $\{x_\al, \al \in \cal I_i \}$, i.e., we integrate out the variables
$\{x_\al, \al \in \cal I_i \}$.
Define the commuting projection operators
$$
Q_i =  1 - P_i, \; P_i = \E_i, \quad P_i^2 = P_i,   \; Q_i^2 = Q_i, \quad [Q_i, P_j]=[P_i, P_j]=[Q_i, Q_j]=0 \, .
$$
For $A\subset\llbracket 1, N\rrbracket$
$$
Q_A:=\prod_{i\in A}Q_i,\quad P_A:=\prod_{i\in A}P_i
$$
We  use  the notation
\begin{align*}
[{\bf Z}] \;=\; \frac{1}{N} \sum_{i = 1}^N {\bf Z}_i,\quad {\bf Z}_i:= Q_i{S}_i\,.
\end{align*}
Let  $p $ be an even integer
 Suppose  for some constants $C_0$, $c_0>0$ there is a set $\Xi$ (the  "good configurations") so that the
following assumptions hold:

\begin{enumerate}

\item (Bound on $Q_A  S_i$ in $\Xi$). There exist   deterministic positive numbers $\cal X<1$ and $\cal Y$ such that  for any set $A\subset\llbracket 1, N\rrbracket$ with  $i\in  A$ and $\abs{A } \leq p$,
  $Q_{A}S_i$ in $\Xi$  can be written as the sum of two  random variables
  {
 \be\label{511}
   ( Q_{A} S_i  )=  {\bf Z}_{i, A}+  Q_{A}{\bf 1}(\Xi ^c)
   \wt { {\bf Z}}_{i, A}, \quad  {\rm in}\quad   \Xi
  \ee
  and
   \be\label{511a}
   \; | {\bf Z}_{i, A} |\leq  \cal Y \big(C_0\cal X|A| \big)^{ |A|} ,\quad
   | \wt {\bf Z}_{i, A} |\leq   \cal Y  N^{C_0|A|} \ee
}

\item (Crude  bound on $  S_i$).
$$
\max_i | S_i | \;\leq\; \cal Y N^{C_0}\,.
$$

\item ($\Xi$ has high probability).
$$
  \P[\Xi^c] \;\leq\; \me^{-c_0(\log N)^{3/2}  p }\,.
$$

   \end{enumerate}

Then, under the assumptions (i) -- (iii),      we have
$$
\E [{\bf Z}] ^{p}  \leq
  (Cp)^{4p }
 \big[ \cal X^{2} + N^{-1}\big]^{p} \cal Y^p
$$
for some $C>0$ and any sufficiently large $N
$.
 \end{lemma}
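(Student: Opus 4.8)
The plan is a direct moment computation. Fix the even integer $p$ and expand
$$
\E[\mathbf Z]^p \;=\; N^{-p}\sum_{i_1,\dots,i_p=1}^{N}\E\Big[\,\prod_{k=1}^{p} Q_{i_k}S_{i_k}\,\Big].
$$
For a fixed tuple $\mathbf i=(i_1,\dots,i_p)$ let $B=B(\mathbf i)$ be its set of distinct values, $s=|B|$, and $m_a$ the multiplicity of $a\in B$. Using $\prod_{a\in B}(P_a+Q_a)=\mathrm{Id}$, expand each factor as $Q_{i_k}S_{i_k}=\sum_{A:\,i_k\in A\subseteq B}Q_A\big(\prod_{a\in B\setminus A}P_a\big)S_{i_k}$, so that each summand is decoupled from $\{x_\alpha:\alpha\in\mathcal I_a\}$, $a\in B\setminus A$, and lies in the range of $Q_A$; hypothesis (i) supplies, for each such summand, a splitting into a good part $\mathbf Z_{i_k,A}$ with $|\mathbf Z_{i_k,A}|\le\mathcal Y(C_0\mathcal X|A|)^{|A|}$ and a $\Xi^c$-supported remainder $Q_A\mathbf 1(\Xi^c)\widetilde{\mathbf Z}_{i_k,A}$ with $|\widetilde{\mathbf Z}_{i_k,A}|\le\mathcal Y N^{C_0|A|}$. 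Multiplying out all $p$ factors writes $\E[\prod_k Q_{i_k}S_{i_k}]$ as a sum over covering patterns $\mathbf A=(A_1,\dots,A_p)$, $i_k\in A_k\subseteq B$, of purely good terms $\E[\prod_k\mathbf Z_{i_k,A_k}]$ plus terms in which at least one factor carries a $\mathbf 1(\Xi^c)$.

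\textbf{Cancellation and counting of the good terms.} The standard fluctuation‑averaging cancellation applies: $\E[\prod_k\mathbf Z_{i_k,A_k}]$ vanishes unless every $a\in B$ belongs to $A_k$ for at least two indices $k$, since if $a$ lies in a unique $A_{k_0}$ then integrating out $\{x_\alpha:\alpha\in\mathcal I_a\}$ annihilates that (decoupled, $Q_a$‑range) factor while leaving the others untouched. Because $i_k\in A_k$ always, in a surviving pattern $a$ is covered at least $\max(m_a,2)$ times, so $\sum_k|A_k|=\sum_{a\in B}\#\{k:a\in A_k\}\ge p+u$ with $u:=\#\{a\in B:m_a=1\}$, and moreover $2s\le p+u$. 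By the bound in (i) the good term is at most $\mathcal Y^p (C_0\mathcal X)^{\sum_k|A_k|}\prod_k|A_k|^{|A_k|}$; since $|A_k|\le s\le p$, summing over covering patterns of each fixed total size is a geometric‑type series in $\mathcal X$, summable because $\mathcal X<1$, and the surviving range forces at least a power $\mathcal X^{p+u}$. Combining with the prefactor $N^{-p}$ and the elementary count of at most $N^s p^p$ tuples $\mathbf i$ with $s$ distinct values and $u$ of multiplicity one, the total good contribution is
$$
(Cp)^{4p}\,\mathcal Y^p\,N^{\,s-p}\,\mathcal X^{\,p+u}\;\le\;(Cp)^{4p}\,\mathcal Y^p\,\big(N^{-1/2}\mathcal X\big)^{p-u}\big(\mathcal X^2\big)^{u}\;\le\;(Cp)^{4p}\,\mathcal Y^p\,(\mathcal X^2+N^{-1})^{p},
$$
where the last inequality uses $2\sqrt{\mathcal X^2\cdot N^{-1}}\le\mathcal X^2+N^{-1}$, and all multiplicity profiles, choices of which factor "borrows" which low‑multiplicity index, and the weights $|A_k|^{|A_k|}$ (bounded via $j^j\le e^j j!$) are polynomial in $p$ to the power $p$ and are absorbed into $(Cp)^{4p}$.

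\textbf{The bad‑event terms.} Any term in the expansion containing a $\mathbf 1(\Xi^c)$ is supported on $\Xi^c$, on which it is bounded deterministically by $(\mathcal Y N^{C})^{p}$ for some $C$ depending only on $C_0$, using the crude bound in (i) together with (ii). By (iii), $\P(\Xi^c)\le e^{-c_0(\log N)^{3/2}p}$, so the combined contribution of all such terms is at most $\mathcal Y^p N^{Cp}e^{-c_0(\log N)^{3/2}p}\le\mathcal Y^p N^{-p}$ for $N$ large, which is subsumed in the right‑hand side. Adding the two estimates over all tuples $\mathbf i$ (whose number is already accounted for in the $N^s$ factors above) yields $\E[\mathbf Z]^p\le (Cp)^{4p}(\mathcal X^2+N^{-1})^p\mathcal Y^p$.

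\textbf{Main obstacle.} The delicate step is the counting in the second paragraph: one must match the number of surviving covering patterns — including the $|A_k|^{|A_k|}$ weights and the combinatorics of which factor borrows which index — against the powers of $\mathcal X$ they carry, so that the estimate genuinely lands on $(\mathcal X^2+N^{-1})^p$ rather than something weaker, all while keeping every combinatorial prefactor inside $(Cp)^{4p}$. A secondary subtlety is that the decomposition in (i) is only asserted on $\Xi$, so one must be careful that the clean cancellation is applied only to honestly decoupled $Q_a$‑range objects $\mathbf Z_{i_k,A_k}$ and that every term involving $\mathbf 1(\Xi^c)$ is instead routed into the crude bound; this is precisely what the maximal expansion, and the construction of $\mathbf Z_{i,A}$ supplied by the lemma's hypotheses, are designed to guarantee.
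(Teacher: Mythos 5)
The paper does not prove this lemma. Appendix D opens with ``We recall an abstract cancellation Lemma proved in \cite{PilYin2011}'' and then states the result without argument, so there is no in-paper proof to compare your proposal against. What you have written is the standard fluctuation-averaging scheme (maximal expansion of the $Q_i$'s against a partition of unity, cancellation of singly-covered indices, counting of surviving covering patterns), and that is indeed the method used in Pillai--Yin and in its precursors in the Wigner setting.

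Two substantive comments on the proposal itself. First, a genuine gap: hypothesis (i) supplies a decomposition of $Q_A S_i$, whereas your expansion produces the objects $Q_A\bigl(\prod_{a\in B\setminus A}P_a\bigr)S_{i_k}$. You assert that hypothesis (i) ``supplies, for each such summand, a splitting,'' but it does not do so directly. One must first apply $P_{B\setminus A}$ to the identity $Q_A S_{i_k}=\mathbf Z_{i_k,A}+Q_A\mathbf 1(\Xi^c)\widetilde{\mathbf Z}_{i_k,A}$, obtaining $Q_A P_{B\setminus A}S_{i_k}=P_{B\setminus A}\mathbf Z_{i_k,A}+P_{B\setminus A}Q_A\mathbf 1(\Xi^c)\widetilde{\mathbf Z}_{i_k,A}$, and then observe that $P_{B\setminus A}$ is an $L^\infty$ contraction commuting with $Q_A$, so the good piece $P_{B\setminus A}\mathbf Z_{i_k,A}$ inherits the bound in \eqref{511a} \emph{and} acquires exactly the two structural properties (range of $Q_A$, independence from the $a$-variables for $a\in B\setminus A$) that your cancellation step uses. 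As written, you conflate $\mathbf Z_{i_k,A}$ with $P_{B\setminus A}\mathbf Z_{i_k,A}$; the cancellation argument does not go through for the former, since nothing in hypothesis (i) makes $\mathbf Z_{i_k,A}$ independent of the variables indexed by $B\setminus A$. (One also needs the identity \eqref{511} to hold off $\Xi$, not merely on it, in order to conclude $Q_A\mathbf Z_{i,A}=\mathbf Z_{i,A}$; this is what the construction in Lemma \ref{lem:32y} actually provides, and should be stated.)

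Second, the reduction $N^{s-p}\mathcal X^{p+u}\le(\mathcal X^2+N^{-1})^p$ via $2s\le p+u$ and AM--GM is correct at the level of exponents, but the combinatorial bookkeeping that you explicitly flag as ``the delicate step'' — summing $\prod_k(C_0\mathcal X|A_k|)^{|A_k|}$ over admissible covering patterns, absorbing the $|A_k|^{|A_k|}$ weights and the number of partitions of $\llbracket 1,p\rrbracket$ into $(Cp)^{4p}$, and making the ``geometric-type series'' argument precise without assuming $\mathcal X$ small compared to $1/p$ — is genuinely the heart of the matter and is asserted rather than carried out. In its present form the proposal is a correct and sensible outline of the Pillai--Yin argument, not a complete proof.
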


Roughly speaking, this lemma increase the estimate of $ {\bf Z}_i$ from $\cal X$
to $\cal X^2$ after averaging over $i$.

\begin{bibdiv}

 \begin{biblist}

\bib{Bai1997}{article}{
   author={Bai, Z. D.},
   title={Circular law},
   journal={Ann. Probab.},
   volume={25},
   date={1997},
   number={1},
   pages={494--529}
}

\bib{BaiSil2006}{book}{
   author={Bai, Z. D.},
   author={Silverstein, J.},
   title={Spectral Analysis of Large Dimensional Random Matrices},
   series={Mathematics Monograph Series},
   volume={2},
   publisher={Science Press},
   place={Beijing},
   date={2006}}

\bib{BenCha2011}{article}{
   author={Benaych-Georges, F.},
   author={Chapon, F.},
   title={Random right eigenvalues of Gaussian quaternionic matrices},
   journal={Random Matrices: Theory and Applications},
   volume={2}
   date={2012}}

\bib{BorSin2009}{article}{
   author={Borodin, A.},
   author={Sinclair, C. D.},
   title={The Ginibre ensemble of real random matrices and its scaling
   limits},
   journal={Comm. Math. Phys.},
   volume={291},
   date={2009},
   number={1},
   pages={177--224}
}

\bib{CacMalSch2012}{article}{
   author={Cacciapuoti, C.},
   author={Maltsev, A.},
   author={Schlein, B.},
   title={Local Marchenko-Pastur law at the hard edge of sample covariance matrices},
   journal={to appear in Journal of Mathematical Physics},
   date={2012}}

\bib{Dav1995}{article}{
   author={Davies, E. B.},
   title={The functional calculus},
   journal={J. London Math. Soc. (2)},
   volume={52},
   date={1995},
   number={1},
   pages={166--176}}

\bib{Ede1997}{article}{
   author={Edelman, A.},
   title={The probability that a random real Gaussian matrix has $k$ real
   eigenvalues, related distributions, and the circular law},
   journal={J. Multivariate Anal.},
   volume={60},
   date={1997},
   number={2},
   pages={203--232}
}

\bib{ErdYauYin2010PTRF}{article}{
   author={Erd{\H{o}}s, L.},
   author={Yau, H.-T.},
   author={Yin, J.},
   title={Bulk universality for generalized Wigner matrices},
   journal={Probability Theory and Related Fields},
  volume={154}
  number={1-2}
  pages={341--407} 
  date={2012}
}

\bib{ErdYauYin2010Adv}{article}{
   author={Erd{\H{o}}s, L.},
   author={Yau, H.-T.},
   author={Yin, J.},
   title={Rigidity of Eigenvalues of Generalized Wigner Matrices},
   journal={Adv. Mat.},
   date={2012},
   volume={229},
   number={3},
   pages={1435--1515}
}

\bib{For2010}{book}{
   author={Forrester, P. J.},
   title={Log-gases and random matrices},
   series={London Mathematical Society Monographs Series},
   volume={34},
   publisher={Princeton University Press},
   place={Princeton, NJ},
   date={2010},
   pages={xiv+791}}

\bib{ForNag2007}{article}{
     author={Forrester, P. J.},
     author={Nagao, T.},
     title={Eigenvalue Statistics of the Real Ginibre Ensemble},
     journal={Phys. Rev. Lett.},
     volume={99},
     date={2007}
}

\bib{Gin1965}{article}{
   author={Ginibre, J.},
   title={Statistical ensembles of complex, quaternion, and real matrices},
   journal={J. Mathematical Phys.},
   volume={6},
   date={1965},
   pages={440--449}}

\bib{Gir1984}{article}{
   author={Girko, V. L.},
   title={The circular law},
   language={Russian},
   journal={Teor. Veroyatnost. i Primenen.},
   volume={29},
   date={1984},
   number={4},
   pages={669--679}
}

\bib{GotTik2010}{article}{
   author={G{\"o}tze, F.},
   author={Tikhomirov, A.},
   title={The circular law for random matrices},
   journal={Ann. Probab.},
   volume={38},
   date={2010},
   number={4},
   pages={1444--1491}}

\bib{GuiKriZei2009}{article}{
   author={Guionnet, A.},
   author={Krishnapur, M.},
   author={Zeitouni, O.},
   title={The single ring theorem},
   journal={Ann. of Math.}
   volume={174},
   date={2011},
   number={2},
   pages={1189--1217}}

\bib{Meh2004}{book}{
   author={Mehta, M.},
   title={Random matrices},
   series={Pure and Applied Mathematics (Amsterdam)},
   volume={142},
   edition={3},
   publisher={Elsevier/Academic Press, Amsterdam},
   date={2004},
   pages={xviii+688}}

\bib{PanZho2010}{article}{
   author={Pan, G.},
   author={Zhou, W.},
   title={Circular law, extreme singular values and potential theory},
   journal={J. Multivariate Anal.},
   volume={101},
   date={2010},
   number={3},
   pages={645--656}
}

\bib{PilYin2011}{article}{
    author={Pillai, N.},
    author={Yin, J.},
    title={Universality of Covariance matrices},
    journal={preprint {\tt arXiv:1110.2501}},
    date={2011}
    }

\bib{Rud2008}{article}{
author={Rudelson, M.},
title={Invertibility of random matrices: Norm of the inverse},
journal={Ann. of Math.},
volume={168},
number={2} ,
date={2008},
pages={575--600}}

\bib{RudVer2008}{article}{
   author={Rudelson, M.},
   author={Vershynin, R.},
   title={The Littlewood-Offord problem and invertibility of random
   matrices},
   journal={Adv. Math.},
   volume={218},
   date={2008},
   number={2},
   pages={600--633}
}

\bib{Sin2007}{article}{
   author={Sinclair, C. D.},
   title={Averages over Ginibre's ensemble of random real matrices},
   journal={Int. Math. Res. Not. IMRN},
   date={2007},
   number={5}
}

\bib{TaoVu2008}{article}{
   author={Tao, T.},
   author={Vu, V.}
   title={Random matrices: the circular law},
   journal={Commun. Contemp. Math.},
   volume={10},
   date={2008},
   number={2}
   pages={261--307},
}

\bib{TaoVuKri2010}{article}{
   author={Tao, T.},
   author={Vu, V.},
   title={Random matrices: universality of ESDs and the circular law},
   note={With an appendix by Manjunath Krishnapur},
   journal={Ann. Probab.},
   volume={38},
   date={2010},
   number={5},
   pages={2023--2065}}

 \end{biblist}

\end{bibdiv}

\end{document}